\def\step{
   \@ifnextchar[ \@step{\@noitemargtrue\@step[\@itemlabel]}}
\def\@step[#1]{\item[#1]\mbox{}\\\hspace*{\dimexpr-\labelwidth-\labelsep}}
\newtheorem{theorem}{Theorem}[section]
\newtheorem{lemma}[theorem]{Lemma}
\theoremstyle{definition}
\newtheorem{remark}{Remark}
\numberwithin{equation}{section}
\begin{document}

\address{School of Mathematics \\
           Korea Institute for Advanced Study, Seoul\\
           Republic of Korea}
   \email{qkrqowns@kias.re.kr}
\author{Bae Jun Park}

\title[Fourier Multipliers]{ Fourier multiplier theorems for Triebel-Lizorkin spaces}
\keywords{Fourier Multipliers,  Triebel-Lizorkin spaces,  H\"ormander-Mikhlin Multipliers}

\begin{abstract} 
In this paper we study sharp generalizations of $\dot{F}_p^{0,q}$ multiplier theorem of Mikhlin-H\"ormander type. 
The class of multipliers that we consider involves Herz spaces $K_u^{s,t}$. Plancherel's theorem proves $\widehat{L_s^2}=K_2^{s,2}$ and we study the optimal triple $(u,t,s)$ for which $\sup_{k\in\mathbb{Z}}{\big\Vert \big( m(2^k\cdot)\varphi\big)^{\vee}\big\Vert_{K_u^{s,t}}}<\infty$ implies $\dot{F}_p^{0,q}$ boundedness of multiplier operator $T_m$ where  $\varphi$ is a cutoff function. Our result also covers the $BMO$-type space $\dot{F}_{\infty}^{0,q}$.
\end{abstract}

\maketitle
\section{\textbf{Introduction}}

In this paper we give some sharp estimates for multipliers of  Mikhlin-H\"ormander type  in Triebel-Lizorkin spaces $\dot{F}_p^{0,q}$. 
Let $S(\mathbb{R}^d)$ denote the Schwartz space and $S'(\mathbb{R}^d)$ the space of tempered distributions. 
For the Fourier transform of $f$ we use the definition 
$\widehat{f}(\xi):=\int_{\mathbb{R}^d}{f(x) e^{-2\pi i\langle x,\xi \rangle}}dx $ and denote by $f^{\vee}$ the inverse Fourier transform of $f$.

For $m\in L^{\infty}$  the multiplier operator $T_m$ is defined as $T_mf(x):=\big( m\widehat{f}\big)^{\vee}(x)$.
The classical Mikhlin multiplier theorem \cite{Mik} states that if a function $m$, defined on $\mathbb{R}^d$, satisfies
\begin{eqnarray*}
\big|  \partial_{\xi}^{\beta}m(\xi)  \big|\lesssim_{\beta}|\xi|^{-|\beta|}
\end{eqnarray*} for all multi-indices $\beta$ with $|\beta|\leq \big[d/2\big]+1$, then the operator $T_m$ is bounded on $L^p$ for $1<p<\infty$.
In \cite{Ho} H\"ormander extends  Mikhlin's theorem to functions $m$ with the weaker condition
\begin{eqnarray}\label{hocondition}
\sup_{k\in\mathbb{Z}}{\big\Vert m(2^k\cdot)\varphi\big\Vert_{L^2_s}}<\infty
\end{eqnarray} for $s>d/2$ where $L^2_s$ stands for the standard fractional Sobolev space, $\varphi$ is a cutoff function such that $0\leq \varphi\leq 1$, $\varphi=1$ on $1/2\leq |\xi|\leq 2$, and $Supp(\varphi)\subset \{1/4\leq |\xi|\leq 4\}$.
When $0<p\leq 1$ Calder\'on and Torchinsky \cite{Ca_To} proved that if (\ref{hocondition}) holds for $s>d/p-d/2$, then $m$ is a Fourier multiplier of Hardy space $H^p$. A different proof was given by Taibleson and Weiss \cite{Ta_We}. Moreover, Baernstein and Sawyer \cite{Ba_Sa} sharpened these results by using Herz space conditions for  $\big(m(2^k\cdot)\varphi\big)^{\vee}$.

We recall the definition of Herz spaces  \cite{He}, following the terminology in \cite{Ba_Sa}.
Let $B_0:=\{x:|x|\leq 2\}$ and 
for $k\in \mathbb{Z}^+$ let $B_k:=\{x\in\mathbb{R}^d: 2^k< |x|\leq 2^{k+1}\}$.  Suppose $0< u\leq \infty$, $t>0$, and $s\in\mathbb{R}$. Then the Herz space  $K_u^{s,t}$  is the collection of $f\in L_{loc}^{u}$ such that 
\begin{eqnarray*}
\Vert f\Vert_{K_u^{s,t}}:=\Big( \sum_{k=0}^{\infty}{2^{skt}\Vert f\Vert_{L^u(B_k)}^t }\Big)^{1/t}<\infty
\end{eqnarray*}
with the usual modification if $t=\infty$.
Then elementary considerations show that $K_u^{s_1,t_1}\hookrightarrow K_u^{s_2,t_2}$ for $s_2<s_1$, $K_u^{s_1,t_1}\hookrightarrow K_u^{s_2,t_2}$ for $t_1<t_2$, and $K_{u_1}^{s,t_1}\hookrightarrow K_{u_2}^{u,t_2}$ for $u_2\leq u_1$ with $s_2=s_1-(d/u_2-d/u_1)$. 
The condition (\ref{hocondition}) is equivalent to 
\begin{eqnarray}\label{kcondition}
\sup_{k\in\mathbb{Z}}\big\Vert \big(m(2^k\cdot)\varphi \big)^{\vee}\big\Vert_{K_2^{s,2}}<\infty,
\end{eqnarray}
 and 
Baernstein and Sawyer \cite{Ba_Sa} improved the $H^p$ boundedness by replacing $K_2^{s,2}$ in (\ref{kcondition}) by $K_1^{d/p-d,p}$, which is an endpoint result,  when $0<p<1 $. 
Note that
$$K_2^{s,2}\hookrightarrow K_2^{d/p-d/2,p}\hookrightarrow K_u^{d/p-d/u,p}\hookrightarrow K_p^{0,p}$$ for $s>d/p-d/2$ and $p\leq u\leq 2$.\\

The main purpose of this paper is to provide some improvements of the results by Baernstein and Sawyer.
Let \begin{eqnarray*}
\mathcal{K}_{u}^{s,t}[m]:=\sup_{k\in\mathbb{Z}}\big\Vert \big(m(2^k\cdot)\varphi\big)^{\vee}\big\Vert_{K_u^{s,t}}<\infty.
\end{eqnarray*}
 We will study the optimal condition of triples $(u,t,s)$ for which   $\mathcal{K}_{u}^{s,t}[m]<\infty$ implies $\dot{F}_p^{0,q}$ boundedness of $T_{m}$ for $0<p,q\leq \infty$. 
Note that  $\dot{F}_p^{0,2}=H^p$, $\dot{F}_p^{\alpha,2}=H_{\alpha}^{p}$ for $0<p<\infty$, $\alpha\in\mathbb{R}$ and  $\dot{F}_{\infty}^{0,2}=BMO$ where $H_{\alpha}^{p}$ stands for the Hardy-Sobolev space.

The example by Baernstein and Sawyer \cite{Ba_Sa} shows that the $H^1$ boundedness of $T_m$ fails with the condition $\mathcal{K}_1^{0,1}[m]<\infty$. Instead, they applied weighted Herz spaces to establish an endpoint estimate for $H^1$ multipliers.
Seeger \cite{Se2} extended this result to $\dot{F}_p^{0,q}$, $p,q\geq 1$. 
Another main part of this paper is to improve these results using more generalized weighted Herz spaces.\\

This paper is organized as follows.
 We describe our multiplier results in Section \ref{mainsection}, sharpness of them in Section \ref{negative}, and endpoint multiplier results with weighted Herz space conditions in Section \ref{weight}. The proof of the results will be given in Section \ref{mmultiplier}, \ref{example}, and \ref{weightproof}.

\section{\textbf{Multiplier theorems}}\label{mainsection}

Let us start by recalling the definition of Besov sapces and Triebel-Lizorkin spaces.
 Let $\phi$ be a smooth function so that $\widehat{\phi}$ is supported in $\{\xi:2^{-1}\leq |\xi|\leq 2\}$ and $\sum_{k\in\mathbb{Z}}{\widehat{\phi_k}(\xi)}=1$ for $\xi\not=0$ where $\phi_k:=2^{kd}\phi(2^k\cdot)$. 
 For each $k\in\mathbb{Z}$ we define convolution operators $\Pi_k$ by $\Pi_kf:=\phi_k\ast f$.
For $0<p,q\leq \infty$ and $\alpha\in \mathbb{R}$. The (homogeneous) Besov spaces $\dot{B}_p^{\alpha,q}$ and Triebel-Lizorkin spaces $\dot{F}_p^{\alpha,q}$  are defined as  subspaces of $S'/\mathcal{P}$ (tempered distributions modulo polynomials) with  (quasi-)norms   $$\Vert f\Vert_{\dot{B}_p^{\alpha,q}}:=\big\Vert \{2^{\alpha k}\Pi_kf\}_{k\in\mathbb{Z}}\big\Vert_{l^q(L^p)},$$ 
 $$\Vert f\Vert_{\dot{F}_p^{\alpha,q}}=\big\Vert \{2^{\alpha k}\Pi_kf\}_{k\in\mathbb{Z}}\big\Vert_{L^p(l^q)}, ~ p<\infty ~\text{or}~p=q=\infty, $$
 respectively.
 When $p=\infty$ and $q<\infty$ we apply
$$\Vert f\Vert_{\dot{F}_{\infty}^{\alpha,q}}:=\sup_{P\in\mathcal{D}}\Big(\frac{1}{|P|}\int_P{\sum_{k=-\log_2{l(P)}}^{\infty}{2^{\alpha kq}|\Pi_kf(x)|^q}}dx \Big)^{1/q}$$ where $\mathcal{D}$ stands for the set of all dyadic cubes in $\mathbb{R}^d$ and $l(P)$ means the side length of $P\in\mathcal{D}$. According to those norms,  the spaces are quasi-Banach spaces (Banach spaces if $p\geq 1, q\geq 1$).\\

Since the set of all Fourier multipliers for $\dot{F}_p^{\alpha,q}$ is independent of $\alpha$ (similarly for $\dot{B}_p^{\alpha,q}$) we shall deal with only the case $\alpha=0$ in this paper.\\

First let us  state multiplier theorems for Besov spaces $\dot{B}_p^{0,q}$. 
\begin{theorem}\label{multiplierbesov}
Assume $0<p,q\leq \infty$ and let $r:=\min{(1,p)}$.
\begin{enumerate}
\item For $0<u\leq r$, 
\begin{eqnarray*}
\big\Vert T_mf\big\Vert_{\dot{B}_p^{0,q}}\lesssim \mathcal{K}_u^{0,r}[m]\Vert f\Vert_{\dot{B}_p^{0,q}}.
\end{eqnarray*}
\item For $r<u\leq \infty$
\begin{eqnarray*}
\big\Vert T_mf\big\Vert_{\dot{B}_p^{0,q}}\lesssim \mathcal{K}_{u}^{d/r-d/u,r}[m]\Vert f\Vert_{\dot{B}_p^{0,q}}.
\end{eqnarray*}
\end{enumerate}
\end{theorem}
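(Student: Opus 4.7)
The plan is to reduce Theorem~\ref{multiplierbesov} to a scale-invariant convolution inequality via Littlewood--Paley decomposition, and then to verify that inequality using Peetre's maximal function together with an annular decomposition of the kernel. Since $\Pi_k$ is a Fourier multiplier, it commutes with $T_m$, so
$$\Vert T_m f\Vert_{\dot{B}_p^{0,q}}=\bigl\Vert \{T_m\Pi_k f\}_{k\in\mathbb{Z}}\bigr\Vert_{l^q(L^p)},$$
and it suffices to prove the uniform-in-$k$ single-block estimate $\Vert T_m\Pi_k f\Vert_{L^p}\lesssim \mathcal{K}_u^{s,r}[m]\,\Vert \Pi_k f\Vert_{L^p}$ with $s=\max\{0,\,d/r-d/u\}$. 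Choosing a bump $\widetilde\phi$ with $\widehat{\widetilde\phi}\equiv 1$ on $\mathrm{supp}(\widehat\phi)$, I write $T_m\Pi_k f=K_k*\Pi_k f$ with $K_k=[m\,\widehat{\widetilde\phi}(\cdot/2^k)]^\vee$. The change of variables $\xi=2^k\eta$ gives $K_k(x)=2^{kd}G_k(2^kx)$ with $G_k=[m(2^k\cdot)\widehat{\widetilde\phi}]^\vee$, and $\Vert G_k\Vert_{K_u^{s,r}}\lesssim \mathcal{K}_u^{s,r}[m]$ uniformly in $k$ (absorbing $\widehat{\widetilde\phi}$ into the Herz cutoff $\varphi$). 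A rescaling then reduces the matter to the scale-free inequality
$$\Vert G*g\Vert_{L^p}\lesssim \Vert G\Vert_{K_u^{s,r}}\,\Vert g\Vert_{L^p}\qquad(\ast)$$
for every $g$ with Fourier support in a fixed neighborhood of $\{|\xi|\sim 1\}$, where $G$ is itself band-limited to a fixed compact set.

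To prove $(\ast)$, I decompose $G=\sum_{j\ge 0}G\chi_{B_j}$ and, inside each annulus, further partition into unit cubes $Q_l\subset B_j$. The $L^p$-quasi-triangle inequality ($p\le 1$, $r=p$) or Minkowski's inequality ($p>1$, $r=1$) gives $\Vert G*g\Vert_{L^p}^r\lesssim \sum_j \Vert(G\chi_{B_j})*g\Vert_{L^p}^r$. For each annulus I combine two ingredients. First, Peetre's maximal function $g^\sharp_\sigma(x)=\sup_z|g(x-z)|(1+|z|)^{-d/\sigma}$ with $0<\sigma<p$, which satisfies $\Vert g^\sharp_\sigma\Vert_{L^p}\lesssim \Vert g\Vert_{L^p}$ for band-limited $g$, yields the translation-invariant unit-cube estimate $\Vert(G\chi_{Q_l})*g\Vert_{L^p}\lesssim \Vert G\Vert_{L^1(Q_l)}\Vert g\Vert_{L^p}$. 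Second, H\"older's inequality bounds $\Vert G\Vert_{L^1(Q_l)}$ by $\Vert G\Vert_{L^u(Q_l)}$ when $u\ge 1$; for $u<1$ the local Nikolsky inequality (valid because $G$ is band-limited) gives $\Vert G\Vert_{L^1(Q_l)}\lesssim \Vert G\Vert_{L^u(CQ_l)}$ on a slightly enlarged cube. Summation over $Q_l\subset B_j$ using H\"older in $\ell$-space (for $u\ge 1$) or the quasi-subadditivity $\sum a_l \le (\sum a_l^u)^{1/u}$ valid for $u\le 1$ (for $u<1$) collapses the inner sum to $\Vert G\Vert_{L^u(\tilde B_j)}$, with a scaling factor $2^{js}$ where $s=\max\{0,d/r-d/u\}$. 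Summing in $j$ produces the Herz norm $\Vert G\Vert_{K_u^{s,r}}$ on the right of $(\ast)$.

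The main obstacle is the range $0<u<1$, where neither the standard Young inequality $L^1*L^p\to L^p$ (when $p<1$) nor H\"older (when $u<1$) controls $\Vert G\Vert_{L^1(Q_l)}$ by $\Vert G\Vert_{L^u(Q_l)}$. This is overcome precisely because $G=[m(2^k\cdot)\widehat{\widetilde\phi}]^\vee$ is band-limited, so local Nikolsky promotes $L^u$ to $L^1$ on unit-scale balls; this band-limitedness is an intrinsic feature of kernels arising from smoothly localized multipliers and is what permits the Herz-space hypothesis to be genuinely weaker than the naive $L^1$-based Mikhlin--H\"ormander condition. Once $(\ast)$ is in hand, reinserting it into the $l^q(L^p)$ norm from the first reduction completes the proof; the case $p=\infty$ is handled identically since the $\dot B_\infty^{0,q}$ norm uses plain $L^\infty$ per frequency block.
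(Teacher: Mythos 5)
Your reduction to the unit-scale inequality $\Vert G\ast g\Vert_{L^p}\lesssim\Vert G\Vert_{K_u^{s,r}}\Vert g\Vert_{L^p}$ for band-limited $G,g$, and the subsequent bookkeeping over annuli and unit cubes, is structurally sound, and in the regime $u\ge 1$ your ingredients (Young, H\"older, $\ell^r$-subadditivity, and the Peetre maximal function to get the translation-uniform cube estimate $\Vert (G\chi_{Q_l})\ast g\Vert_{L^p}\lesssim\Vert G\Vert_{L^1(Q_l)}\Vert g\Vert_{L^p}$ when $p<1$) do yield the theorem. But the step you invoke precisely when $u<1$ --- the ``local Nikolsky inequality'' $\Vert G\Vert_{L^1(Q_l)}\lesssim\Vert G\Vert_{L^u(CQ_l)}$ for band-limited $G$ on a fixed enlargement of a unit cube --- is false. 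Band-limitedness only gives reverse estimates of this kind with rapidly decaying tails, never with a compactly supported window: in $d=1$, let $P_\eta$ be a Chebyshev-type peaking polynomial with $P_\eta(0)=1$, $0\le P_\eta\le 1$ on $[-\eta,\eta]$ and $|P_\eta|\le \eta^{10}$ on $\eta\le|x|\le 3$ (Remez/Chebyshev), and set $G(x)=e^{2\pi ix}P_\eta(x)$, which has Fourier support at the single point $1$ and hence lies in every class $\mathcal{E}(c)$. Then $\Vert G\Vert_{L^1(Q)}\gtrsim\eta$ (up to a logarithmic loss) while $\Vert G\Vert_{L^u(CQ)}\lesssim\eta^{1/u}\ll\eta$ for $u<1$, so the ratio blows up as $\eta\to0$. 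The reason is that the local size of a band-limited function can be driven by its size far away (here $P_\eta$ is enormous outside $[-3,3]$, which is what permits a spike of width $\eta\ll1$; were $G$ globally bounded by its local supremum, Bernstein's inequality would forbid it). Since $u<1$ is exactly the content of part (1) (and of part (2) whenever $u<1$), your proof fails where the theorem has substance; for $u\ge1$ the statement is essentially Young plus H\"older.

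The gap is repairable, but the repair is the missing idea rather than a cosmetic fix: one must keep the tails, e.g.\ via the Peetre--Plancherel--P\'olya pointwise bound $|G(x)|\lesssim_{M,u}\big(\int |G(y)|^u(1+|x-y|)^{-M}\,dy\big)^{1/u}$ for $G\in\mathcal{E}(c)$, and then carry the tail weights through your summation over cubes and annuli (discrete Young with $p/u\ge1$ makes them summable), including at the endpoint $u=r$ where a crude appeal to the boundedness of $\mathcal{M}$ on $L^{p/u}$ is unavailable. This is in effect what the paper does, in global form: Lemma \ref{nikol} proves $\Vert G\Vert_{L^{q}}\lesssim\Vert G\Vert_{K_u^{0,q}}$ for band-limited $G$ by exactly such a maximal-function argument (with a $\sup\times\int$ splitting to handle the endpoint), and then Lemma \ref{less} combines it with Young's inequality for $p\ge1$, respectively the band-limited convolution inequality for $p<1$, to get part (1); part (2) follows at once from the embedding $K_u^{d/r-d/u,r}\hookrightarrow K_r^{0,r}$, which replaces your per-annulus H\"older step. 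If you substitute the tailed pointwise bound for your cube-local claim, your more localized argument goes through and recovers the same result.
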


\begin{remark}
By embedding $K_u^{d/r-d/u,r}\hookrightarrow K_r^{0,r}$ for $r<u$, (2) is immediate from (1). Moreover, these results are sharp in the sense that $\mathcal{K}_u^{0,r}[m]$ in (1) and $\mathcal{K}_u^{d/r-d/u,r}[m]$ in (2) cannot be replaced by
$\mathcal{K}_u^{0,r+\epsilon}[m]$ and $\mathcal{K}_u^{d/r-d/u,r+\epsilon}[m]$, respectively. This optimality of $t=r$ in $\mathcal{K}_u^{s,t}$ also implies that of $s=0$ in (1) and $s=d/r-d/u$ in (2).
Some related examples will be given in Section \ref{negative}.
\end{remark}

 $\dot{F}_p^{0,q}$ multiplier theorems for the case $p=q$ follow from Theorem \ref{multiplierbesov} because $\dot{F}_p^{0,p}=\dot{B}_p^{0,p}$. Therefore we now consider the case $p\not= q$.

\begin{theorem}\label{multipliertheorem0}
Assume  $0<p,q\leq \infty$, $p\not= q$, and  $r:=\min{(p,q)}<1$. Let $0<u\leq r$ and $t>0$.
Then for $s>0$ $$\mathcal{K}_{u}^{s,t}[m]<\infty$$
implies the $\dot{F}_p^{0,q}$ boundedness of $T_m$.
Moreover, in the case
\begin{eqnarray*}
\big\Vert T_mf\big\Vert_{\dot{F}_{p}^{0,q}}\lesssim \mathcal{K}_{u}^{s,t}[m]\Vert f\Vert_{\dot{F}_{p}^{0,q}}.
\end{eqnarray*}
\end{theorem}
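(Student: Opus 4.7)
The plan is to establish the pointwise estimate
$$|\Pi_k T_m f(x)| \lesssim \mathcal{K}_u^{s,t}[m]\,\bigl(M|\tilde\Pi_k f|^\sigma(x)\bigr)^{1/\sigma}$$
with $0 < \sigma < r := \min(p,q)$ and $\tilde\Pi_k$ a slightly fattened Littlewood-Paley projector, and then conclude via the Fefferman-Stein vector-valued maximal inequality in $L^{p/\sigma}(l^{q/\sigma})$, which is applicable exactly because $\sigma < r$ forces $p/\sigma, q/\sigma > 1$.

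\emph{Reduction.} Choosing $\tilde\phi \in \mathcal{S}(\mathbb{R}^d)$ with $\widehat{\tilde\phi} \equiv 1$ on $\operatorname{supp}\widehat\phi$ and $\operatorname{supp}\widehat{\tilde\phi} \subset \{1/4 \leq |\xi| \leq 4\}$ and setting $\tilde\Pi_k f := \tilde\phi_k \ast f$, a dilation argument combined with $\varphi \equiv 1$ on $\operatorname{supp}\widehat\phi$ yields
$$\Pi_k T_m f(x) = \int (K_k \ast \phi)(z)\, \tilde\Pi_k f(x - 2^{-k}z)\, dz, \qquad K_k := (m(2^k\cdot)\varphi)^\vee,$$
where crucially $K_k \ast \phi$ has Fourier support in the fixed compact set $\operatorname{supp}\widehat\phi \subset \{|\xi| \leq 2\}$.

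\emph{Kernel estimate.} Since $\tilde\Pi_k f$ is spectrally localized at scale $2^k$, Peetre's inequality and its comparison to the Hardy-Littlewood maximal function for spectrally localized functions give
$$|\tilde\Pi_k f(x - 2^{-k}z)| \leq (1+|z|)^{d/\sigma}\,\bigl(M|\tilde\Pi_k f|^\sigma(x)\bigr)^{1/\sigma},$$
so the target pointwise bound reduces to the kernel inequality
$$\int |K_k \ast \phi|(z)(1+|z|)^{d/\sigma}\, dz \lesssim \mathcal{K}_u^{s,t}[m].$$
Decomposing $\mathbb{R}^d = \bigcup_j B_j$ dyadically, each piece $\|K_k \ast \phi\|_{L^1(B_j)}$ is controlled by exploiting that $K_k \ast \phi$ has bounded spectral support: a localized Plancherel-Polya/Nikolskii inequality transfers the local $L^u$ information provided by $\mathcal{K}_u^{s,t}[m]$ into the local $L^1$ control needed here, bypassing Hölder's inequality (which fails for $u \leq r < 1$). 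When $t = \infty$ the resulting sum over $j$ is geometric with ratio depending on $s$ and $\sigma$; the case $t < \infty$ is handled by sacrificing a small fraction of the positive smoothness $s > 0$ to convert an $l^t$-tail into a sup, exactly as in the proof of Theorem~\ref{multiplierbesov}.

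\emph{Conclusion and main obstacle.} Applying $\|\cdot\|_{L^p(l^q)}$ to the pointwise bound and invoking Fefferman-Stein finishes the case $p < \infty$; the case $p = \infty$ uses the dyadic-cube formulation of $\dot F_\infty^{0,q}$ together with a local version of the same estimate on Carleson boxes. The principal obstacle is the kernel inequality: balancing the Peetre weight $(1+|z|)^{d/\sigma}$ (note $d/\sigma > d$ since $\sigma < r < 1$) against the Herz decay $2^{-sj}$ forces a careful choice of $\sigma$ and a delicate exploitation of the extra Schwartz decay provided by $\phi$ in $K_k \ast \phi$. This is precisely where the $\dot F_p^{0,q}$-case departs from the Besov-case analog (Theorem~\ref{multiplierbesov}), which permits $s = 0$ when $u \leq r$: routing through the $(M|\cdot|^\sigma)^{1/\sigma}$ functional (required by the $L^p(l^q)$ structure when $p \neq q$) costs an arbitrarily small amount of regularity, forcing the strict inequality $s > 0$.
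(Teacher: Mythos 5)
There is a genuine gap, and it sits exactly where you flagged the ``principal obstacle'': the kernel inequality $\int |(K_k\ast\phi)(z)|\,(1+|z|)^{d/\sigma}\,dz \lesssim \mathcal{K}_u^{s,t}[m]$ cannot be proved under the hypothesis $s>0$. The Herz condition only gives $\Vert K_k\Vert_{L^u(B_j)}\lesssim 2^{-sj}\mathcal{K}_u^{s,t}[m]$, and band-limitedness (local Plancherel--Polya/Nikolskii, summing over unit cubes with $\ell^u\hookrightarrow\ell^1$) converts this into $\Vert K_k\ast\phi\Vert_{L^1(B_j)}\lesssim 2^{-sj}\mathcal{K}_u^{s,t}[m]$ --- but nothing better. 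Hence your weighted integral is of size $\sum_j 2^{j(d/\sigma-s)}$, which diverges unless $s>d/\sigma$; since Fefferman--Stein forces $\sigma<\min(p,q)<1$, this means $s>d/\min(p,q)>d$, far stronger than $s>0$ and in fact worse than the classical condition recalled in Section 2.1. The hoped-for rescue from ``the extra Schwartz decay provided by $\phi$'' is illusory: convolution with a Schwartz function cannot improve the tail decay of $K_k$ itself. Concretely, take $m$ built from the kernel $h^{(u,s,\tau)}(x)=(1+|x|^2)^{-\frac{1}{2}(s+d/u)}(1+\log(1+|x|))^{-\tau}$ as in Section 6 of the paper: then $\mathcal{K}_u^{s,t}[m]<\infty$ for $\tau>1/t$, yet $|K_k\ast\phi|$ decays only like $|z|^{-s-d/u}$, so $\int|K_k\ast\phi|(1+|z|)^{d/\sigma}dz=\infty$ whenever $s\le d+d/\sigma-d/u$, which includes every small $s>0$. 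So your single-scale pointwise domination by $(\mathcal{M}|\tilde\Pi_kf|^{\sigma})^{1/\sigma}$ with $\sigma<\min(p,q)$ can only yield the theorem in the easy range $s\gtrsim d/\min(p,q)$, not for all $s>0$; no choice of $\sigma$ repairs this, because the admissible $\sigma$ is capped by $\min(p,q)$, not by $s$.

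The paper avoids this by never asking for a pointwise bound uniform over all scales with a sub-unit exponent. For $0<p<1$, $p<q$, it runs an atomic argument ($\varphi$-transform plus $\infty$-atoms for $\dot f_p^{0,q}$): near the atom's cube only the operator bounds of Lemma \ref{less} (which need just $\mathcal{K}_u^{0,u}[m]$) are used, and away from the cube the low/mid/high decomposition over Herz annuli exploits the atom's localization so that an arbitrarily small $s>0$ suffices. For $q<p$ it uses the sharp maximal characterization (\ref{lemmasharp}) with an in/out split; there the Peetre exponent is chosen in the window $d/p<\sigma<s$, which imposes only $p>d/s$, and that restriction is then removed by real interpolation with the diagonal case $p=q$ (Theorem \ref{multiplierbesov}), where $s=0$ already works. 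That two-pronged structure is what lets the strict inequality $s>0$ (rather than $s>d/\min(p,q)$) do the job, and it is the ingredient your proposal is missing.
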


\begin{theorem}\label{multipliertheorem1}
Assume  $0<p,q\leq \infty$, $p\not= q$, and $r:=\min{(p,q)}<1$. Let $r<u\leq \infty$, $t>0$, and $s\in\mathbb{R}$.
Suppose that  $m\in L^{\infty}$ satisfies
$$\mathcal{K}_{u}^{s,t}[m]<\infty.$$
Then $T_m$ is bounded on $\dot{F}_p^{0,q}$ if one of the following conditions holds;
\begin{enumerate}
\item $s=d/r-d/u$ and $t\leq r$, 
\item $s>d/r-d/u$.
\end{enumerate}
Moreover, in both cases
\begin{eqnarray*}
\big\Vert T_mf\big\Vert_{\dot{F}_{p}^{0,q}}\lesssim \mathcal{K}_{u}^{s,t}[m]\Vert f\Vert_{\dot{F}_{p}^{0,q}}.
\end{eqnarray*}
\end{theorem}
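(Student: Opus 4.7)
The plan is to deduce Theorem \ref{multipliertheorem1} from Theorem \ref{multipliertheorem0} via Herz space embeddings in the super-critical case (2), and to treat the endpoint case (1) by a direct argument using Peetre-type vector-valued maximal estimates (or, equivalently, a smooth atomic/molecular decomposition of $\dot F_p^{0,q}$). The $p=\infty$ endpoint involving the $BMO$-type space $\dot F_\infty^{0,q}$ is then addressed separately, typically via a Carleson-measure test on dyadic cubes.

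For case (2), where $s>d/r-d/u$, the Herz embedding from the introduction gives $K_u^{s,t}\hookrightarrow K_r^{s-(d/r-d/u),t}$, hence $\mathcal{K}_r^{s-(d/r-d/u),t}[m]\lesssim \mathcal{K}_u^{s,t}[m]$. The shifted smoothness index $s-(d/r-d/u)$ is strictly positive by hypothesis, so Theorem \ref{multipliertheorem0} applied with the triple $(r,t,s-(d/r-d/u))$ immediately yields the desired bound on $\dot F_p^{0,q}$.

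Case (1), with $s=d/r-d/u$ and $t\leq r$, lies outside the scope of Theorem \ref{multipliertheorem0}, since the same embedding only lands at $K_r^{0,t}$, where the endpoint index $s_0=0$ is forbidden. Here I would argue directly. Writing $T_m\Pi_k f$ as a rescaled convolution of $\Pi_k f$ against $\widetilde M_k:=(m(2^k\cdot)\widetilde\varphi)^{\vee}$, where $\widetilde\varphi\equiv 1$ on $\mathrm{supp}(\varphi)$, I would decompose $\widetilde M_k=\sum_{j\geq 0}\widetilde M_k\mathbf{1}_{B_j}$ into dyadic spatial annuli and estimate each piece separately. A H\"older inequality on $B_j$ (combined with a Nikolskii-type inequality exploiting the band-limitedness of $\widetilde M_k$ when $u<1$) together with the Peetre maximal function estimate produces a pointwise bound of the form
\[
|T_m\Pi_k f(x)|\lesssim \mathcal{K}_u^{d/r-d/u,t}[m]\cdot\mathfrak{M}_\sigma \Pi_k f(x),
\]
where $\mathfrak{M}_\sigma g:=(M|g|^\sigma)^{1/\sigma}$ and $\sigma$ is chosen in $(0,\min(p,q))$. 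The summation over $j$ is carried out using $\ell^t\hookrightarrow \ell^1$, valid since $t\leq r\leq 1$; it is here that the endpoint condition $t\leq r$ is essential. The Fefferman--Stein vector-valued maximal inequality then closes the bound when $p<\infty$.

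The principal obstacle is the critical scaling at $s=d/r-d/u$ and $t=r$, where the tight summation in $j$ forces a sharp choice of Peetre index and delicate use of the band-limited structure of $\widetilde M_k$, especially when $u<1$ and standard H\"older estimates on $B_j$ fail. Equally subtle is the $\dot F_\infty^{0,q}$ endpoint: one tests against dyadic cubes $P$, estimates $\frac{1}{|P|}\int_P\sum_{k\geq -\log_2\ell(P)}|T_m\Pi_k f(x)|^q\,dx$, and exploits either duality with $\dot F_1^{0,q'}$ (for $q\geq 1$) or an atomic/tent-space description of $\dot F_\infty^{0,q}$ (for $q<1$), matched against the already-established boundedness on some $\dot F_{p_0}^{0,q_0}$ with $p_0,q_0<\infty$.
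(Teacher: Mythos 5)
Your handling of case (2) coincides with the paper's: the remark after Theorem \ref{multipliertheorem3} disposes of $s>d/r-d/u$ exactly by the embedding into $K_r^{s-(d/r-d/u),t}$ and Theorem \ref{multipliertheorem0}. The gap is in case (1). The pointwise estimate $|T_m\Pi_kf(x)|\lesssim \mathcal{K}_u^{d/r-d/u,t}[m]\,\mathcal{M}_{\sigma}\Pi_kf(x)$ with $\sigma<r=\min(p,q)$ cannot be proved at the endpoint, and is in fact false. To dominate the contribution of the annulus piece $m_k^{\vee}\chi_{B_j}(2^k\cdot)$ by $\mathcal{M}_{\sigma}\Pi_kf(x)$ you must pass through the Peetre maximal function $\mathfrak{M}_{d/\sigma,2^k}\Pi_kf$, i.e.\ pay a weight of size $2^{jd/\sigma}$ on $B_j$; combining this with the $L^1(B_j)$-type bound extracted from the Herz datum (H\"older for $u\geq 1$, band-limited Nikolskii-type bounds for $u<1$) the $j$-th piece is of order $2^{j(d/\sigma-(d/r-d))}\varepsilon_j\,\mathfrak{M}_{d/\sigma,2^k}\Pi_kf(x)$, where $\varepsilon_j:=2^{j(d/r-d/u)}\Vert(m(2^k\cdot)\varphi)^{\vee}\Vert_{L^u(B_j)}$ is only $\ell^r$-summable. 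Since $d/\sigma>d/r>d/r-d$, the factor $2^{j(d/\sigma-(d/r-d))}$ grows geometrically and the $j$-sum diverges; $\ell^t\hookrightarrow\ell^1$ cannot absorb a geometric factor, so "$t\leq r$" is not what saves this step. Any $\sigma$ making the $j$-sum converge must satisfy $d/\sigma<d/r-d$, i.e.\ $\sigma>r/(1-r)>r$, which is too large for the Fefferman--Stein/Peetre inequality at level $\min(p,q)=r$: at the endpoint there is exactly zero room. Moreover, if your bound did hold for some $\sigma<r$, it would yield boundedness of $T_m$ on every $\dot F_{p_1}^{0,q_1}$ with $\sigma<\min(p_1,q_1)=r_1<r$, contradicting the paper's sharpness result (Theorem \ref{sharpthm1}(1)), since $s=d/r-d/u<d/r_1-d/u$.

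This is why the paper's endpoint proof has an entirely different architecture, none of which your outline supplies. For $p<q$ (so $p<1$) it uses the $\varphi$-transform and $\infty$-atoms and estimates the far field over the annuli $D_j$ in $L^p$ globally rather than pointwise: the key extra input is $\Vert A_{Q_0,k}\Vert_{L^u}\lesssim 2^{\mu d(1/p-1/u)}$, which produces the convergent factor $2^{-(k-\mu)d(1/p-1/u)}$ (here $u>p$ is used crucially). For $q<p<\infty$ it uses Seeger's sharp maximal characterization (\ref{lemmasharp}), splits inside/outside $P^{**}$, and the outside estimate only closes when $d/p<\sigma<d/q-d/u$, i.e.\ $1/p<1/q-1/u$; the remaining range is recovered by real interpolation (\ref{realinter}) with the diagonal case $p=q$ (Theorem \ref{multiplierbesov}), a step absent from your plan. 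Finally, for $p=\infty$, $q<1$ the paper uses the dyadic-cube maximal inequalities of Lemma \ref{maximal2} and the embedding $\dot F_{\infty}^{0,q}\hookrightarrow\dot F_{\infty}^{0,\infty}$; your suggested duality with $\dot F_1^{0,q'}$ is unavailable there because $q<1$, and the "atomic/tent-space" alternative is not worked out. As it stands, case (1) is unproved.
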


\begin{theorem}\label{multipliertheorem2}
Assume $1\leq p,q\leq \infty$ and $p\not= q$. Let $0<u\leq 1$ and $t>0$. 
Then for $s>0$  $$\mathcal{K}_{u}^{s,t}[m]<\infty$$ implies the
$\dot{F}_p^{0,q}$ boundedness of $T_m$.
Moreover, in the case \begin{eqnarray*}
\big\Vert T_mf\big\Vert_{\dot{F}_{p}^{0,q}}\lesssim \mathcal{K}_{u}^{s,t}[m]\Vert f\Vert_{\dot{F}_{p}^{0,q}}.
\end{eqnarray*} 
\end{theorem}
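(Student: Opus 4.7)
The plan is to establish that, under the hypothesis $\mathcal{K}_u^{s,t}[m]<\infty$ with $0<u\le 1$ and $s>0$, the operator $T_m$ is a Calder\'on-Zygmund operator whose $L^2$ bound and H\"ormander kernel constant are both controlled by a constant multiple of $\mathcal{K}_u^{s,t}[m]$. Once this is established, the Fefferman-Stein vector-valued inequality yields boundedness of $T_m$ on $L^p(\ell^q)$ for $1<p,q<\infty$; since $T_m$ commutes with the Littlewood-Paley projections $\Pi_j$, this is equivalent to boundedness on $\dot{F}_p^{0,q}$ in that range. The endpoint cases ($p\in\{1,\infty\}$ or $q\in\{1,\infty\}$) will be handled separately, using $H^1$-$BMO$ duality for Calder\'on-Zygmund operators together with the Carleson-measure description of $\dot{F}_\infty^{0,q}$.

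For the kernel analysis I would write $K=\sum_{k\in\mathbb{Z}}K_k$ with $K_k(x)=2^{kd}G_k(2^k x)$ and $G_k:=(m(2^k\cdot)\varphi)^{\vee}$; each $G_k$ is band-limited to the fixed annulus $\{1/4\le|\xi|\le 4\}$, independent of $k$. The decisive consequence of band-limitation is a local Plancherel-P\'olya inequality of the form $\|G_k\|_{L^1(B_l)}\lesssim \|G_k\|_{L^u(B_l^\ast)}$ on a slight enlargement $B_l^\ast$ of $B_l$. Combining this with the Herz bound $\|G_k\|_{L^u(B_l)}\le 2^{-sl}\mathcal{K}_u^{s,t}[m]$, read off from the very definition of $K_u^{s,t}$, the hypothesis $s>0$ yields both the uniform global bound $\|G_k\|_{L^1}\lesssim \mathcal{K}_u^{s,t}[m]$ and the tail estimate
\[
\int_{|z|>2^l}|G_k(z)|\,dz\lesssim 2^{-sl}\mathcal{K}_u^{s,t}[m].
\]
Fourier inversion together with the first bound then gives $\|m\|_{L^\infty}\lesssim\mathcal{K}_u^{s,t}[m]$, supplying the required $L^2$-bound for $T_m$.

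For the H\"ormander condition $\int_{|x|>2|y|}|K(x-y)-K(x)|\,dx\lesssim \mathcal{K}_u^{s,t}[m]$ I would split according to the size of $2^k|y|$. When $2^k|y|\le 1$, Bernstein's inequality $\|\nabla G_k\|_{L^1}\lesssim \|G_k\|_{L^1}$ (available thanks to the band-limit) and the mean-value theorem yield $\int|K_k(x-y)-K_k(x)|\,dx\lesssim 2^k|y|\cdot\mathcal{K}_u^{s,t}[m]$. When $2^k|y|\ge 1$, the tail estimate above gives $\int_{|x|>2|y|}|K_k(x)|\,dx\lesssim(2^k|y|)^{-s}\mathcal{K}_u^{s,t}[m]$, and similarly for the translated copy. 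Summing $\min\bigl(2^k|y|,(2^k|y|)^{-s}\bigr)$ over $k\in\mathbb{Z}$ produces a geometric total of size $O(1)$, precisely because $s>0$.

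The principal technical obstacle will be the local Plancherel-P\'olya step $\|G_k\|_{L^1(B_l)}\lesssim\|G_k\|_{L^u(B_l^\ast)}$ when $u<1$; unlike the $u\ge 1$ case this is not a consequence of H\"older and must be extracted from the band-limitation of $G_k$ through a sampling-type argument, with constants uniform in $k$ and $l$ because the Fourier support sits at a fixed scale. Once this lemma is in hand, the rest of the proof is a fairly standard combination of Calder\'on-Zygmund theory with the vector-valued Fefferman-Stein inequality.
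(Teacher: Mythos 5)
Your Calder\'on--Zygmund reduction is sound as far as it goes, with one technical correction: the strictly local inequality $\Vert G_k\Vert_{L^1(B_l)}\lesssim\Vert G_k\Vert_{L^u(B_l^\ast)}$ with only a slight enlargement is not what band-limitation gives you when $u<1$; what is true is the tailed Peetre-type bound $|G_k(x)|^u\lesssim\int |G_k(y)|^u(1+|x-y|)^{-N}\,dy$ (uniform in $k$ because the spectrum sits in a fixed annulus), and since $K_u^{s,t}\hookrightarrow K_u^{s,\infty}$ this does yield the two facts you actually use, namely $\Vert G_k\Vert_{L^1}\lesssim\mathcal{K}_u^{s,t}[m]$ and $\int_{|z|>2^l}|G_k(z)|\,dz\lesssim 2^{-ls}\mathcal{K}_u^{s,t}[m]$ (this is essentially the mechanism behind Lemma \ref{nikol} and the weighted tail sums in the paper). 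From these, $\Vert m\Vert_{L^\infty}\lesssim\mathcal{K}_u^{s,t}[m]$, the H\"ormander condition with constant $\lesssim\mathcal{K}_u^{s,t}[m]$, and the vector-valued Calder\'on--Zygmund theorem (Benedek--Calder\'on--Panzone rather than Fefferman--Stein, which concerns the maximal function) give $L^p(\ell^q)$ bounds, hence $\dot{F}_p^{0,q}$ bounds, for $1<p,q<\infty$.

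The genuine gap is the endpoint range $p\in\{1,\infty\}$ or $q\in\{1,\infty\}$, which is part of the statement and is in fact its main content (the interior range already follows from classical Mikhlin--H\"ormander theory). An $L^2$ bound plus the integral H\"ormander condition does not give: $H^1\to H^1$ (only $H^1\to L^1$), $BMO\to BMO$, $L^p(\ell^1)$ or $L^p(\ell^\infty)$ bounds (the $\ell^\infty$-valued inequality for Calder\'on--Zygmund operators is false in general), or boundedness on $\dot{F}_\infty^{0,q}$ with $q<\infty$. Indeed, the H\"ormander constant of your kernel only retains the uniform bound $\sup_k\Vert(m(2^k\cdot)\varphi)^\vee\Vert_{L^1}$, and the Baernstein--Sawyer example (and Theorem \ref{sharpthm0} of the paper) shows precisely that this is not sufficient for $\dot{F}_p^{0,q}$ boundedness when $p\ne q$; the scale-by-scale decay $2^{-ls}$ must be exploited inside the Littlewood--Paley structure rather than compressed into one kernel constant. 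Your one-line plan of ``$H^1$--$BMO$ duality plus the Carleson description of $\dot{F}_\infty^{0,q}$'' does not do this. To close these cases one has to argue as the paper does: for $1\le q<p\le\infty$, via the sharp maximal characterization of $\dot{F}_p^{0,q}$ and the splitting into $\mathcal{R}_{P,u}^{in}$ and $\mathcal{R}_{P,u}^{out}$, where the $s>0$ tail decay is paired with the Peetre maximal function of $\Pi_kf$, with the $p=\infty$ case run through the dyadic-cube definition of $\dot{F}_\infty^{0,q}$ and the maximal lemmas of Section \ref{prelim}; for $p=1<q\le\infty$, via the $\varphi$-transform and $\infty$-atoms for $\dot{f}_1^{0,q}$; and for $1<p<q\le\infty$ by duality. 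Without arguments of this type the endpoint cases of the theorem remain unproved.
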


\begin{theorem}\label{multipliertheorem3}
Assume $1\leq p,q\leq \infty$ and  $p\not= q$. Let $1<u\leq \infty$, $t>0$, and $s\in\mathbb{R}$. Suppose that  $m\in L^{\infty}$ satisfies $$\mathcal{K}_{u}^{s,t}[m]<\infty.$$
Then $T_m$ is bounded on $\dot{F}_p^{0,q}$ if one of the following conditions holds;
\begin{enumerate}
\item  $s=d-d/u$, $t\leq 1$, $1<p<\infty$ and $\big|1/p-1/q\big|<1-1/u$, 
\item $s>d-d/u$.
\end{enumerate}
Moreover, in both cases \begin{eqnarray*}
\big\Vert T_mf\big\Vert_{\dot{F}_{p}^{0,q}}\lesssim \mathcal{K}_{u}^{s,t}[m]\Vert f\Vert_{\dot{F}_{p}^{0,q}}.
\end{eqnarray*} 
\end{theorem}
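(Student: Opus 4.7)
The plan is to split into the easier case (2), which reduces directly to Theorem \ref{multipliertheorem2}, and the delicate endpoint case (1), which is the heart of the theorem. For case (2), since $u \ge 1$, the Herz embedding $K_u^{s, t} \hookrightarrow K_1^{s - d + d/u, t}$ recalled in the introduction yields $\mathcal{K}_1^{s - d + d/u, t}[m] \lesssim \mathcal{K}_u^{s, t}[m]$. The strict hypothesis $s > d - d/u$ makes the reduced smoothness index strictly positive, so Theorem \ref{multipliertheorem2} applies with $u_0 = 1$, $s_0 = s - d + d/u > 0$ and the same $t$, immediately delivering the $\dot{F}_p^{0, q}$ estimate.

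Case (1) concerns the endpoint $s = d - d/u$ with $t \le 1$. The first step is a kernel-level pointwise bound. Writing $K_j := (m\widehat{\phi_j})^\vee$ and splitting $K_j = \sum_{n \ge 0} K_{j, n}$ via dyadic annular cut-offs at scales $|y| \sim 2^{n - j}$, H\"older's inequality with exponent $u$ on each annulus yields
\[
|K_{j, n} \ast g|(x) \lesssim 2^{n d / u'}\, a_{j, n}\, M_{u'}(g)(x),
\]
where $a_{j, n} := \|(m(2^j \cdot)\varphi)^\vee\|_{L^u(B_n)}$ and $M_{u'}(g) := (M(|g|^{u'}))^{1/u'}$. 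Summing over $n$, the endpoint $s = d/u'$ combined with $t \le 1$ (so that $l^t \hookrightarrow l^1$ for non-negative sequences) gives $\sum_n 2^{nd/u'} a_{j, n} \le \mathcal{K}_u^{d-d/u, t}[m]$, hence $|T_m \Pi_j f| \lesssim \mathcal{K}_u^{d-d/u, t}[m]\, M_{u'}(\Pi_j f)$. Frequency-support considerations force $\Pi_k T_m \Pi_j \equiv 0$ for $|k - j| > 1$, so the Fefferman-Stein vector-valued maximal inequality applied to $M_{u'}$ immediately yields $\dot{F}_p^{0, q}$-boundedness on the \emph{box} $p, q > u'$.

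To upgrade from this box to the full band $|1/p - 1/q| < 1 - 1/u$, the plan is to interpolate. The diagonal $p = q$ is handled via the Besov case: the chain $K_u^{d - d/u, t} \hookrightarrow K_1^{0, t} \hookrightarrow K_1^{0, 1}$ (valid since $u \ge 1$ and $t \le 1$) reduces the hypothesis to $\mathcal{K}_1^{0, 1}[m] < \infty$, and Theorem \ref{multiplierbesov}(1) then gives $\dot{B}_p^{0, p} = \dot{F}_p^{0, p}$-boundedness for every $p \ge 1$. A by-product of the annular H\"older argument is the uniform bound $\|K_j\|_{L^1} \lesssim \mathcal{K}_u^{d-d/u, t}[m]$, which combined with classical Littlewood-Paley theory delivers the scalar $L^p = \dot{F}_p^{0, 2}$-boundedness of $T_m$ for $1 < p < \infty$. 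Complex interpolation of Triebel-Lizorkin spaces between the diagonal Besov estimate, the $q = 2$ scalar estimate, and the box estimate above should then fill in the band $|1/p - 1/q| < 1 - 1/u$ symmetrically, with the $\dot{F}_\infty^{0, q}$ case treated separately via either Carleson measures or duality with $\dot{F}_1^{0, q'}$.

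The hard part will be the interpolation itself: the convex hull of the diagonal and the Fefferman-Stein box does not cover the full band in the $(1/p, 1/q)$-plane, so the argument must exploit the scalar $L^p$-line $q = 2$ as an additional family of endpoints and interpolate carefully to realise the sharp band $|1/p - 1/q| < 1 - 1/u$. Compatibility with the $p = \infty$ endpoint, where $\dot{F}_\infty^{0, q}$ has a distinct norm structure, adds a further layer of technicality.
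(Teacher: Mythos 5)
Your reduction of case (2) to Theorem \ref{multipliertheorem2} via the Herz embedding, and your first step in case (1) --- the annular H\"older bound giving $|T_{m_k}\Pi_kf|\lesssim \mathcal{K}_u^{d-d/u,1}[m]\,\mathcal{M}_{u'}(\Pi_kf)$ and hence, by Fefferman--Stein, boundedness on the box $u'<p<\infty$, $u'<q\leq\infty$ --- coincide with the paper. The gap is in how you pass from the box to the band $|1/p-1/q|<1-1/u$. Your two auxiliary families of endpoints do not do the job. First, the claimed ``by-product'' that $T_m$ is bounded on $L^p=\dot{F}_p^{0,2}$ for \emph{all} $1<p<\infty$ is false at the endpoint regularity $s=d-d/u$: by Theorem \ref{sharpthm2}(4) with $q=2$, whenever $|1/p-1/2|\geq 1-1/u$ (which occurs for every $1<u\leq 2$) there is $m$ with $\mathcal{K}_u^{d-d/u,t}[m]<\infty$ and $T_m$ unbounded on $\dot{F}_p^{0,2}$. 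The proposed justification is also invalid as an argument: a uniform bound $\Vert m_k^{\vee}\Vert_{L^1}\lesssim \mathcal{K}_u^{0,1}[m]$ only gives uniform single-frequency-block bounds (hence Besov $\dot{B}_p^{0,q}$ estimates, as in Theorem \ref{multiplierbesov}), not the vector-valued $L^p(\ell^2)$ estimate that Littlewood--Paley theory requires to conclude $L^p$ boundedness. Second, even granting the diagonal (which is fine) and the full $q=2$ line, the convex hull of these sets together with the Fefferman--Stein box in the $(1/p,1/q)$-plane still misses part of the band --- for instance points with $p,q$ both close to $1$ and $p\neq q$ (e.g.\ $q$ near $1$, $p$ near $u$) are unreachable, and for $u<2$ only a sub-interval of the $q=2$ line lies in the band anyway, so using it as an interpolation input is circular.

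The missing idea is duality: since $(\dot{F}_p^{0,q})'=\dot{F}_{p'}^{0,q'}$ for $1\leq p,q<\infty$, the Fefferman--Stein box dualizes to the reflected box $1<p,q<u$, and complex interpolation (\ref{complexinter}) between the two boxes yields exactly the region $1<p<\infty$, $|1/p-1/q|<1-1/u$; this is how the paper proceeds. The edge $q=1<p<u$, which lies in the band but is not reachable by space duality ($q'=\infty$) nor by the interpolation, requires a separate direct argument --- the paper proves it by testing against $g\in L^{p'}$, splitting $m_k^{\vee}$ over the annuli $B_l$ and the cubes $Q\in\mathcal{D}_{k-l}$, and using H\"older/Young together with the $L^{p'}$ boundedness of $\mathcal{M}_{u'}$ (valid since $p'>u'$). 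Your plan does not address $q=1$ at all, while the separate treatment you flag for $p=\infty$ is unnecessary, since case (1) assumes $1<p<\infty$.
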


\begin{remark}
The second assertions in Theorem \ref{multipliertheorem1} and \ref{multipliertheorem3} follow from Theorem \ref{multipliertheorem0} and \ref{multipliertheorem2}, respectively, and thus we will only prove the endpoint cases $s=d/\min{(1,p,q)}-d/u$.
In addition, the sharpness of the above theorems will be discussed in Section \ref{negative}.

\end{remark}

\subsection{Inhomogeneous versions}
 We recall the definition of inhomogeneous Besov spaces $B_p^{\alpha,q}$ and Triebel-Lizorkin spaces $F_p^{\alpha,q}$.
Let $\{\phi_k\}$ be a dyadic resolution of unity as before, and let $\widehat{\Phi_0}:=1-\sum_{k=1}^{\infty}{\widehat{\phi_k}}$.
Then we define a convolution operator $\Lambda_0$ by $\Lambda_0f:=\Phi_0\ast f$ and let $\Lambda_k=\Pi_k$ for $k\geq 1$.   
For $\alpha\in \mathbb{R}$ and $0<p,q\leq \infty$ $B_p^{\alpha,q}$ and $F_p^{\alpha,q}$ are the collection of all $f\in S'$ such that
\begin{eqnarray*}
\Vert f\Vert_{B_p^{\alpha,q}}:=\big\Vert \big\{ 2^{\alpha k}\Lambda_kf\big\}\big\Vert_{l^q(L^p)}<\infty
\end{eqnarray*} 
\begin{eqnarray*}
\Vert f\Vert_{F_p^{\alpha,q}}:=\big\Vert \big\{  2^{\alpha k}\Lambda_kf\big\}\big\Vert_{L^p(l^q)}<\infty, \quad p<\infty
\end{eqnarray*} respectively.
When $p=q=\infty$ we employ $F_{\infty}^{\alpha,q}=B_{\infty}^{\alpha,\infty}$ and when $p=\infty$ and $q<\infty$ 
\begin{eqnarray*}
\Vert f \Vert_{F_{\infty}^{\alpha,q}}:=\Vert \Lambda_0f\Vert_{L^{\infty}}+\sup_{P\in\mathcal{D}, l(P)<1}{\Big(\frac{1}{|P|}\int_P{\sum_{k=-\log_2{l(P)}}^{\infty}{2^{\alpha kq}|\Lambda_kf(x)|^q}}dx \Big)^{1/q}}
\end{eqnarray*}
where the supremum is taken over all dyadic cubes whose side length is less than $1$.\\

Let $\psi$ be a Schwartz function so that $0\leq \psi\leq 1$, $Supp(\psi)\subset \{\xi:|\xi|\leq 2\}$, and $\psi(\xi)=1$ for $|\xi|\leq 1$.
Now we define 
\begin{eqnarray*}
\mathfrak{K}_u^{s,q}[m]:=\big\Vert (m\psi)^{\vee}\big\Vert_{K_u^{s,q}}+\sup_{k\geq 1}{\big\Vert \big(m(2^k\cdot)\varphi\big)^{\vee}\big\Vert_{K_u^{s,q}}},
\end{eqnarray*} which is an inhomogeneous modification of $\mathcal{K}_u^{s,q}[m]$.\\

In \cite{Tr3} and \cite[p74]{Tr} Triebel proved that
for $0<p,q<\infty$
if $m\in L^{\infty}$ satisfies \begin{eqnarray*}
\mathfrak{K}_2^{s,2}[m]<\infty, \quad s>d/\min{(1,p,q)}-d/2
\end{eqnarray*} then
\begin{eqnarray*}
\Vert T_m\Vert_{F_p^{0,q}}\lesssim \mathfrak{K}_2^{s,2}[m]\Vert f\Vert_{F_p^{0,q}}.
\end{eqnarray*}
It was first proved that for $1<p,q<\infty$ if $\mathfrak{K}_2^{s,2}[m]<\infty$ for $s>0$ then $T_m$ is bounded on $F_p^{0,q}$ by using the classical H\"ormander-Mikhlin multiplier theorem. 
Moreover, for $0<p,q<\infty$ it is easy to obtain the $F_p^{0,q}$ boundedness of $T_m$ under the assumption $\mathfrak{K}_2^{s,2}[m]<\infty$ with $s>d/2+d/\min{(p,q)}$. 
Then Triebel \cite{Tr3}, \cite[p74]{Tr} applied a complex interpolation method to derive $s>d/\min{(1,p,q)}-d/2$.\\

All of our results in homogeneous spaces easily adapt to the inhomogeneous cases by replacing $\mathcal{K}_u^{s,q}[m]$ by $\mathfrak{K}_u^{s,q}[m]$, and these results definitely improve and generalize Triebel's results.
We note that the complex interpolation method in \cite{Tr3} cannot be applied to the case $p=\infty$. Basically, 
the method in this paper is totally different from that used in \cite{Tr3} and our method does not apply previous multiplier theorems. 
Moreover the sharpness of $s>d/\min{(1,p,q)}-d/2$ was  left open in \cite{Tr3}, especially for $q<1$ and $q<p$, and our examples in Section \ref{example}  address this issue.

\section{\textbf{Some negative results}}\label{negative}

In this section we study the sharpness of the conditions on $s$ and $t$ in Theorem \ref{multiplierbesov}$-$\ref{multipliertheorem3}.

\begin{theorem}\label{sharpbesov}
Let $0<p,q\leq \infty$ and let  $r:=\min{(1,p)}$.
\begin{enumerate}
\item For $0<u\leq r$ and $t>r$ there exists $m\in L^{\infty}$ so that $\mathcal{K}_u^{0,t}[m]<\infty$, but $T_m$ is not bounded on $\dot{B}_p^{0,q}$.

\item For $ r<u\leq \infty$ and $t>r$ there exists $m\in L^{\infty}$ so that $\mathcal{K}_u^{d/r-d/u,t}[m]<\infty$, but $T_m$ is not bounded on $\dot{B}_p^{0,q}$.
\end{enumerate}
\end{theorem}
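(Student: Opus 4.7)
Both parts of the theorem concern sharpness of the Herz-space index $t=r$ in Theorem~\ref{multiplierbesov}; I plan to construct a multiplier concentrated in a single dyadic annulus, so that $\dot{B}_p^{0,q}$ boundedness of $T_m$ reduces to an $L^p$ statement, and then inflate its $L^p$-norm while keeping the Herz norm finite.  Case (2) will be handled by the same recipe after rescaling each building block so that the weight $2^{k(d/r-d/u)}$ is absorbed into the $L^u(B_k)$ size (equivalently, by the embedding $K_u^{d/r-d/u,t}\hookrightarrow K_r^{0,t}$ applied to an appropriately rescaled version of the example from (1)).

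Construction. Fix $\chi\in\mathcal{S}(\mathbb{R}^d)$ with $\widehat{\chi}$ supported in $\{1/2\le|\xi|\le 2\}$ and $\chi\not\equiv 0$, and a lacunary sequence $\{a_j\}_{j\ge 1}\subset\mathbb{R}^d$ with $a_j\in B_j$ and $|a_i-a_j|\gtrsim 2^{\max(i,j)}$ for $i\ne j$.  For a coefficient sequence $\{c_j\}$ to be chosen, set
\[
K(x):=\sum_{j\ge 1}c_j\,\chi(x-a_j),\qquad m(\xi):=\widehat{K}(\xi)=\widehat{\chi}(\xi)\sum_{j\ge 1}c_j\,e^{-2\pi i\langle a_j,\xi\rangle}.
\]
Then $m$ is Fourier-supported in $\{1/2\le|\xi|\le 2\}$, so $m(2^k\cdot)\varphi$ is nonzero only for one value of $k$, and $\mathcal{K}_u^{0,t}[m]\approx\|K\|_{K_u^{0,t}}$.

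Key steps: (i) The separation of the $a_j$'s forces $\|K\|_{L^u(B_k)}\approx|c_k|\,\|\chi\|_{L^u}$, hence $\|K\|_{K_u^{0,t}}^t\approx\sum_k|c_k|^t$. (ii) To ensure $m\in L^\infty$, pick $\{c_j\}$ so that the trigonometric sum $\sum c_j e^{-2\pi i\langle a_j,\cdot\rangle}$ is bounded on the annulus; when $r<1$ it suffices that $\sum|c_j|<\infty$. (iii) Testing on $f=\chi$, both $f$ and $T_mf = K\ast\chi = \sum_j c_j\,(\chi\ast\chi)(\cdot-a_j)$ have single-annulus spectra, so $\|f\|_{\dot{B}_p^{0,q}}\approx\|\chi\|_{L^p}$ and $\|T_mf\|_{\dot{B}_p^{0,q}}\approx\|K\ast\chi\|_{L^p}$; the disjointness of translates combined with the $r$-subadditivity ($r=\min(1,p)$) of $\|\cdot\|_{L^p}^{r}$ gives
\[
\|T_mf\|_{L^p}^{r}\,\gtrsim\,\Big(\sum_{j}|c_j|^{r}\Big)\|\chi\ast\chi\|_{L^p}^{r}.
\]
(iv) Pick $\{c_j\}$ with $\sum|c_j|^t<\infty$ and $\sum|c_j|^r=\infty$; for $t>r$ and $r<1$ the choice $c_j=j^{-1/r}(1+\log j)^{-\gamma}$ (small $\gamma>0$) works and is absolutely summable. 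Then $\mathcal{K}_u^{0,t}[m]<\infty$ while $\|T_mf\|_{\dot{B}_p^{0,q}}=\infty$.

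Main obstacle. The argument is clean when $r<1$, because $\sum|c_j|<\infty$ (giving $m\in L^{\infty}$) can be arranged simultaneously with $\sum|c_j|^r=\infty$ (giving unboundedness). The subtlety is the case $r=1$ (i.e.\ $p\ge 1$), where $\sum|c_j|=\infty$ is incompatible with absolute convergence of the trigonometric series. I expect to resolve this either by randomizing with Rademacher signs $\epsilon_j$ and invoking a Khintchine/Salem--Zygmund bound to control $\|m\|_{L^\infty}$ along a growing truncation, or by a non-lacunary placement of the $a_j$'s producing a boundedly summable exponential series; balancing these competing requirements is the technical heart of the construction.
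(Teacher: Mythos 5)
Your construction for part (1) in the range $0<p<1$ is essentially the paper's: the paper takes $m(\xi)=\sum_{k\ge 10}k^{-1/p}e^{2\pi i\langle\xi-e_1,2^ke_1\rangle}\widehat{\widetilde{\eta}}(\xi-e_1)$, i.e.\ lacunary translates of one bump with coefficients just failing $\ell^r$ but lying in $\ell^t$, and tests on a single modulated bump exactly as you do; there the argument is sound because for $r=p\le 1$ the (essentially) disjoint translates give $\Vert T_mf\Vert_{L^p}^p\gtrsim\sum_j|c_j|^p$. The genuine gap is everything with $r=1$ and all of part (2). For $1<p<\infty$ your displayed lower bound $\Vert T_mf\Vert_{L^p}^{r}\gtrsim(\sum_j|c_j|^{r})\Vert\chi\ast\chi\Vert_{L^p}^{r}$ is false: disjointly supported translates give the $\ell^p$-sum $\Vert T_mf\Vert_{L^p}^p\approx\sum_j|c_j|^p\Vert\chi\ast\chi\Vert_{L^p}^p$, not the $\ell^1$-sum, and since $t>1$ is arbitrary (in particular $t\ge p$ is allowed) you cannot have $\sum|c_j|^t<\infty$ and $\sum|c_j|^p=\infty$ simultaneously. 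So the obstruction at $r=1$ is not merely arranging $m\in L^\infty$; even if a Rademacher randomization with a Salem--Zygmund bound keeps $m$ bounded, your test-function computation no longer produces any divergence, and you offer no substitute mechanism. The paper proceeds differently: for $1<p<\infty$ it keeps the lacunary sum with $c_k=1/k$, checks $\mathcal{K}_u^{0,t}[m]<\infty$ for $t>1$, and uses that this $m$ is unbounded near $e_1$, so $T_m$ cannot be $L^p$-bounded, hence (by the spectral localization to one annulus) not $\dot{B}_p^{0,q}$-bounded; for $p=\infty$ it tests on $f=\sum_n\eta(\cdot-2^ne_1)e^{2\pi i\langle x,e_1\rangle}$, so that the diagonal terms pile up $\sum_k 1/k$ at a single point --- a single-bump test function, as in your step (iii), only sees $\sup_j|c_j|$ when $p=\infty$ and detects nothing.

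Part (2) as proposed does not work either. The embedding $K_u^{d/r-d/u,t}\hookrightarrow K_r^{0,t}$ goes the wrong way for producing counterexamples: finiteness of the weighted $u$-norm is the \emph{stronger} hypothesis, so the example from (1) need not satisfy it, and indeed it does not --- for any kernel consisting of one unit-width bump per annulus, $\Vert K\Vert_{L^u(B_l)}\approx|c_l|$, so the weight $2^{l(d/r-d/u)t}$ forces geometric decay of $|c_l|$, whence $\sum_l|c_l|^r<\infty$ and no divergence can occur. The sharp examples for (2) must have their mass spread over entire annuli so that the $L^u(B_l)$ and $L^r(B_l)$ averages are genuinely comparable after the H\"older loss $2^{ld(1/r-1/u)}$: for $p<1$ the paper uses the slowly decaying kernel $h^{(p,\tau)}(x)=(1+|x|^2)^{-d/2p}(1+\log(1+|x|))^{-\tau}$ (smoothed and modulated) with $1/t<\tau<1/p$, for which the weighted Herz norm is $\approx(\sum_l(1+l)^{-\tau t})^{1/t}<\infty$ while $\Vert h^{(p,\tau)}\Vert_{L^p}=\infty$; and for $r=1$ it uses bumps at polynomially spaced lattice points $\lambda_n$ with $|\lambda_n|\lesssim n^{1/d}$ (about $2^{ld}$ bumps per annulus $B_l$) and coefficients $n^{-1}(\log n)^{-\delta}$, $1/t<\delta<1$, again concluding via unboundedness of $m$ for $1<p<\infty$ and via explicit test functions or duality at $p=\infty$ and $p=1$. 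Neither of these is a rescaling of your part (1) example, so part (2) needs a construction you have not supplied.
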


\begin{theorem}\label{sharpthm0}
Let $0<p,q\leq \infty$, and $p\not= q$. Suppose $0<u\leq \min{(1,p,q)}$ and $t>0$. Then there exists  $m\in L^{\infty}$ so that $\mathcal{K}_u^{0,t}[m]<\infty$, but $T_m$ is not bounded on $\dot{F}_p^{0,q}$.

\end{theorem}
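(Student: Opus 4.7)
The plan is to build a single multiplier $m$ that encodes modulations by spatial shifts on a sparse sequence of dyadic frequency scales, and then to pair it with two different test families, one for each of the cases $p<q$ and $p>q$. I will choose a smooth bump $\eta$ supported in $\{5/4\leq |\xi|\leq 7/4\}$ with $\eta=1$ on a neighborhood of $(3/2)e_1$, a lacunary sequence $N_k:=10k$, and large spatial vectors $y_k:=100k\,e_1$, and set
\begin{eqnarray*}
m(\xi):=\sum_{k\geq 1}\eta(\xi/2^{N_k})\,e^{2\pi i\langle \xi,y_k\rangle}.
\end{eqnarray*}
Sparsity of $\{N_k\}$ guarantees that for each $n\in\mathbb{Z}$ at most one summand contributes to $m(2^n\cdot)\varphi$, and when $n=N_k$ this produces $m(2^{N_k}\xi)\varphi(\xi)=\eta(\xi)e^{2\pi i\langle \xi,2^{N_k}y_k\rangle}$.

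For the Herz bound, the inverse Fourier transform $\eta^{\vee}(\cdot+2^{N_k}y_k)$ is a translate of a Schwartz function by a vector of magnitude $100k\cdot 2^{N_k}$. The Herz sum $\sum_{j\geq 0}\|\eta^{\vee}(\cdot+2^{N_k}y_k)\|_{L^u(B_j)}^{t}$ is controlled by isolating the single annulus $B_{j_k}$ with $2^{j_k}\sim 2^{N_k}|y_k|$ which absorbs the main mass of $\eta^{\vee}$, while annuli with $2^j\ll 2^{N_k}|y_k|$ or $2^j\gg 2^{N_k}|y_k|$ contribute geometrically decaying amounts from Schwartz decay of $\eta^{\vee}$, so $\mathcal{K}_u^{0,t}[m]\lesssim \|\eta^{\vee}\|_{L^u}<\infty$ uniformly in $k$.

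Next I introduce the test data. Let $g\in\mathcal{S}$ have $\widehat{g}$ supported in $\{|\xi|\leq 1/16\}$, and set $\xi_k:=(3/2)2^{N_k}e_1$ so that $\widehat{g}(\xi-\xi_k)$ lies inside the set where $\eta(\xi/2^{N_k})=1$ and all other summands of $m$ vanish. A direct change of variable gives the key identity
\begin{eqnarray*}
T_m\bigl(g(\cdot-a)e^{2\pi i\langle \cdot,\xi_k\rangle}\bigr)(x)=e^{i\theta_{a,k}}\,g(x-a+y_k)\,e^{2\pi i\langle x,\xi_k\rangle},\qquad |e^{i\theta_{a,k}}|=1,
\end{eqnarray*}
valid for every $a\in\mathbb{R}^d$. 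For $p<q$ I set $a=0$ and take $f:=\sum_{k=1}^{N}g\cdot e^{2\pi i\langle \cdot,\xi_k\rangle}$: frequency separation yields $\Pi_{N_k}f=g\cdot e^{2\pi i\langle \cdot,\xi_k\rangle}$ so $\|f\|_{\dot{F}_p^{0,q}}\sim N^{1/q}\|g\|_{L^p}$, while $|\Pi_{N_k}T_mf|=|g(\cdot+y_k)|$ and the essential disjointness of $\{g(\cdot+y_k)\}$ (from $y_k=100k\,e_1$) gives $\|T_mf\|_{\dot{F}_p^{0,q}}\sim N^{1/p}\|g\|_{L^p}$; the ratio $N^{1/p-1/q}\to\infty$ forces unboundedness. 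For $p>q$ I set $a=y_k$ and take $\tilde f:=\sum_{k=1}^{N}g(\cdot-y_k)e^{2\pi i\langle \cdot,\xi_k\rangle}$: now $\tilde f$ is spatially dispersed while $T_m\tilde f$ is concentrated at the origin, delivering $\|\tilde f\|_{\dot{F}_p^{0,q}}\sim N^{1/p}\|g\|_{L^p}$ against $\|T_m\tilde f\|_{\dot{F}_p^{0,q}}\sim N^{1/q}\|g\|_{L^p}$, and $N^{1/q-1/p}\to\infty$.

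The main technical obstacle is the endpoint $p=\infty$, $q<\infty$, where the $\dot{F}_\infty^{0,q}$ norm is an average over dyadic cubes rather than an $L^\infty$ norm. Here I will verify that for $\tilde f$ every normalized cube average is bounded by a constant multiple of $\|g\|_{L^q}^q$ (unit cubes capture only one shift $y_k$, while cubes of side $R\gg 1$ see a lattice-density family of shifts that cancels after dividing by $|P|$), whereas the unit cube at the origin for $T_m\tilde f$ collects exactly $N$ copies of $|g|^q$ to produce the required $N^{1/q}$ divergence. The symmetric endpoint $q=\infty$, $p<\infty$ is handled by the first construction with $\sup_k$ in place of the $\ell^q$-sum.
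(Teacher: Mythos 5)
Your construction is correct and is essentially the paper's own argument: a multiplier built from disjoint bumps at lacunary frequency scales, each carrying a modulation that translates the corresponding frequency-localized piece by a distinct spatial amount, so that boundedness of $T_m$ would force an impossible $\ell^q$--$\ell^p$ (or reverse) comparison of coefficients; the paper implements this with shifts $2^{\zeta_n}e_1$ and a \emph{single} extremal function whose coefficients $k^{-1/q}(\log k)^{-\epsilon}$, resp.\ $k^{-1/p}(\log k)^{-\delta}$, make one norm finite and the other infinite, whereas you use truncated families and let the ratio $N^{|1/p-1/q|}$ blow up -- a cosmetic difference. The only point needing a touch of care is your identity $|\Pi_{N_k}T_mf|=|g(\cdot+y_k)|$: since $\widehat{\phi_{N_k}}$ need not equal $1$ on the support of $\widehat{g}(\cdot-\xi_k)$, one should (as the paper does) test against $\phi_{N_k-1}+\phi_{N_k}+\phi_{N_k+1}$, or invoke the independence of the quasi-norm from the choice of $\phi$, which is routine and does not affect the argument.
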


\begin{theorem}\label{sharpthm1}
Suppose $p<\infty$ or $1\leq q$. Assume $r=\min{(p,q)}<1$, $u> r$, $t>0$, and $s\in\mathbb{R}$. Then there exists $m\in L^{\infty}$ so that
$\mathcal{K}_u^{s,t}[m]<\infty$, but $T_m$ is not bounded on $\dot{F}_p^{0,q}$ if one of the following conditions holds;
\begin{enumerate}
\item $s<d/r-d/u$,
\item $s=d/r-d/u$ and $t>r$.
\end{enumerate}
\end{theorem}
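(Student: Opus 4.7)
I would prove Theorem \ref{sharpthm1} by constructing explicit counterexample multipliers for each forbidden regime. The common template: set $m(\xi) := \sum_{k=1}^N \widehat{g}(2^{-k}\xi)$ for a single Schwartz function $g$ whose Fourier transform is supported in a thin sub-annulus of $\text{supp}\,\varphi$, so that $(m(2^k\cdot)\varphi)^\vee = g$ uniformly in $k$, giving $\mathcal{K}_u^{s,t}[m] \sim \|g\|_{K_u^{s,t}}$. The physical-space profile of $g$ is then engineered to sit exactly at the Herz-space threshold while causing maximal damage on $\dot{F}_p^{0,q}$.

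Concretely, I take $g(x) = \sum_{l=1}^L c_l \, b(x - x_l)$ with $b$ a fixed Schwartz bump (with $\widehat{b}$ supported in a thin annulus inside $\text{supp}\,\varphi$), points $x_l$ lying along a common ray with $|x_l| \sim 2^l$, and coefficients $c_l$ chosen according to the case. For case (1), $s < d/r - d/u$: take $c_l = 2^{-(d/r-d/u)l}$, so that $\|g\|_{K_u^{s,t}}^t \sim \sum_l 2^{(s - d/r + d/u)lt}$ stays uniformly bounded thanks to the strict gap. For case (2), $s = d/r - d/u$ and $t > r$: take $c_l = 2^{-sl}(1+l)^{-1/r}$, so $\|g\|_{K_u^{s,t}}^t \sim \sum_l (1+l)^{-t/r}$ is finite while $\|g\|_{K_u^{s,r}}^r \sim \sum_l (1+l)^{-1}$ diverges. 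The test function is $f = \sum_{k=1}^N f_k$ built from Littlewood-Paley atoms $f_k = 2^{kd/p}\phi(2^k\cdot)$ centered at the origin, yielding $\|f\|_{\dot{F}_p^{0,q}} \sim N^{1/p}$. Since $\Pi_k(T_m f) \approx g_k * f_k = \sum_l c_l \, f_k(\cdot - 2^{-k}x_l)$ with $g_k = 2^{kd} g(2^k \cdot)$, aligning the $x_l$ along a common ray forces, for each integer $m$, the bumps with $l - k = m$ to concentrate at the spatial location $\sim 2^m e_1$, stacking multiple Littlewood-Paley pieces at the same diagonal point and activating the $\ell^q$ structure of $\dot{F}_p^{0,q}$.

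The main obstacle is the last step: lower-bounding $\|T_m f\|_{\dot{F}_p^{0,q}}$ via these diagonal stacks. At the diagonal point $\sim 2^m e_1$ the $\ell^q$ contribution is governed by $\sum_k |c_{k+m}|^q \, 2^{kdq/p}$, whose effective behavior depends on the sign of $d/p - s$: when $d/p - s \le 0$ the sum is dominated by small $k$ and the bookkeeping is transparent, so that integrating the resulting $L^p$ density over shells $\{|x| \sim 2^m\}$ yields $\|T_m f\|_{\dot{F}_p^{0,q}}/\|f\|_{\dot{F}_p^{0,q}}$ growing unboundedly in $L$ or $N$; when $d/p - s > 0$ the argument requires a refined radial arrangement of the $x_l$'s (varying the geometric rate of the centers) or a test function concentrated on well-separated sites to produce the same blowup. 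The boundary cases $p = \infty$ and $q = \infty$ (included under the hypothesis \emph{$p<\infty$ or $1\le q$}) require testing against characteristic functions of dyadic cubes to match the definition of $\dot{F}_\infty^{0,q}$ but proceed along the same lines.
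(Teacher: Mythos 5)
There is a genuine gap, and it is located in the construction itself, before the lower bound ever gets tested. Your kernel profile $g=\sum_l c_l\,b(x-x_l)$ puts a \emph{single unit-scale bump} in each dyadic shell, so $\Vert g\Vert_{L^{u'}(B_l)}\approx |c_l|$ for \emph{every} exponent $u'$; there is no gap between the $L^u$-average and the $L^r$-average on $B_l$, which is precisely the gap the sharpness of $u>r$ is about. Concretely, at the critical index $s=d/r-d/u$ with $c_l=2^{-sl}(1+l)^{-1/r}$ (and a fortiori in your case (1) choice $c_l=2^{-(d/r-d/u)l}$), pick any $u'\in(r,u)$: then $\sum_l 2^{l(d/r-d/u')r}\Vert g\Vert_{L^{u'}(B_l)}^r\lesssim\sum_l 2^{lr(d/u-d/u')}(1+l)^{-1}<\infty$ because $d/u-d/u'<0$, i.e. $\mathcal{K}_{u'}^{d/r-d/u',r}[m]<\infty$. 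By Theorem \ref{multipliertheorem1}(1) (applied with $u'$ in place of $u$) your $T_m$ is therefore \emph{bounded} on $\dot{F}_p^{0,q}$, so no choice of test function, stacking, or radial rearrangement of the $x_l$ can produce blowup. The extremal kernels must spread their mass over each shell, which is exactly what the paper does: it takes $h^{(u,s,\tau)}(x)=(1+|x|^2)^{-(s+d/u)/2}(1+\log(1+|x|))^{-\tau}$, essentially constant $\approx 2^{-l(s+d/u)}(1+l)^{-\tau}$ on $B_l$, so that $\Vert h\Vert_{L^u(B_l)}\approx 2^{-ls}(1+l)^{-\tau}$ is critical while $\Vert h\Vert_{L^r(B_l)}\approx(1+l)^{-\tau}$ is not summable in the $r$-th power, and builds $m^{(u,s,\tau)}$ as a lacunary frequency sum of rescaled copies of $\widehat{(h^{(u,s,\tau)}\ast\eta)\,e^{-2\pi i\langle\cdot,e_1\rangle}}$.

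The lower-bound plan is also off target. When $p\le q$ (so $r=p$) stacking several Littlewood--Paley scales at one spatial point gains nothing, since $(\sum_k|a_k|^q)^{1/q}\le(\sum_k|a_k|^p)^{1/p}$; the paper simply tests with a single modulated bump and uses $\Vert h^{(u,s,\tau)}\Vert_{L^p}=\infty$ when $s<d/p-d/u$, or $s=d/p-d/u$ and $\tau\le 1/p$ (possible with $\tau>1/t$ precisely because $t>p$). When $q<p<\infty$ (so $r=q$) the needed divergence is genuinely vector-valued, and the paper does not obtain it by an explicit test function: it invokes the Christ--Seeger necessary condition from \cite{Ch_Se}, namely that the inequality $\Vert(\sum_k|r_k^dK(r_k\cdot)\ast f_k|^q)^{1/q}\Vert_{L^p}\le A\Vert(\sum_k|f_k|^q)^{1/q}\Vert_{L^p}$ forces $\Vert K\Vert_{L^q}\lesssim A$, and then uses $\Vert K^{(u,s,\tau)}\Vert_{L^q}\gtrsim\Vert h^{(u,s,\tau)}\Vert_{L^q}=\infty$. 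Your ``diagonal stack'' with all input atoms at the origin does not reproduce this: e.g. for $u=\infty$, $s=d/q$, the contribution of the stack at $2^me_1$ is $O(2^{-mdp/q}(1+m)^{-p/q})$, which is summable in $m$, so the operator-norm ratio stays bounded; the promised ``refined radial arrangement'' is exactly the nontrivial missing ingredient (essentially a reproof of the Christ--Seeger theorem). Finally, the hypothesis ``$p<\infty$ or $1\le q$'' together with $r<1$ simply rules out $p=\infty$, so no separate $\dot{F}_\infty^{0,q}$ testing arises; that case is the one the paper leaves open.
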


\begin{theorem}\label{sharpthm2}
Assume $1\leq p\leq\infty$, $1\leq q\leq \infty$, $1<u\leq \infty$, and $s\in\mathbb{R}$. Then there exists $m\in L^{\infty}$ so that
$\mathcal{K}_u^{s,t}[m]<\infty$, but $T_m$ is not bounded on $\dot{F}_p^{0,q}$ if one of the following conditions holds;
\begin{enumerate}
\item $s<d-d/u$,
\item $s=d-d/u$ and $p=1<q\leq \infty$,
\item $s=d-d/u$ and $1\leq q<p=\infty$,
\item $s=d-d/u$, $1<p<\infty$, and $\big|1/p-1/q \big|\geq 1-1/u$,
\item $s=d-d/u$, $t>1$.
\end{enumerate}
\end{theorem}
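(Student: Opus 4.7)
The plan is to prove each of the five assertions by exhibiting a family of multipliers $m_N$ with $\mathcal{K}_u^{s,t}[m_N]\lesssim 1$ yet $\|T_{m_N}\|_{\dot{F}_p^{0,q}\to\dot{F}_p^{0,q}}\to\infty$ as $N\to\infty$. The common template is
\begin{equation*}
m(\xi)=\eta(\xi)\sum_{j}\epsilon_j c_j\, e^{2\pi i\langle y_j,\xi\rangle},
\end{equation*}
where $\eta$ is a fixed smooth bump with $\operatorname{supp}\eta\subset\{1/2\le|\xi|\le 2\}$, $\{y_j\}$ is a $1$-separated finite subset of $\mathbb{R}^d$, $\epsilon_j\in\{\pm 1\}$, and $\{c_j\}$ are scalar weights. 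Since the dilation $\eta(2^k\cdot)$ vanishes against $\varphi$ for $k\ne 0$, only the scale $k=0$ contributes, and the rapid decay of $\eta^{\vee}$ together with the separation of $\{y_j\}$ gives
\begin{equation*}
\mathcal{K}_u^{s,t}[m]\approx\Big(\sum_{l\ge 0}2^{lst}\Big(\sum_{\{j:\,|y_j|\sim 2^l\}}|c_j|^u\Big)^{t/u}\Big)^{1/t}.
\end{equation*}
For operator norm lower bounds, I apply $T_m$ to a Schwartz function $f$ whose $\widehat f$ is a bump localized near an interior point $\xi_0$ of $\operatorname{supp}\eta$; then $T_m f(x)\approx e^{2\pi i\langle x,\xi_0\rangle}\sum_j\epsilon_j c_j\psi(x+y_j)$ for a Schwartz $\psi$, and because $\widehat{T_m f}$ remains in a fixed frequency annulus one has $\|T_m f\|_{\dot F_p^{0,q}}\approx\|T_m f\|_{L^p}$, so the lower bound is dictated by the spatial geometry of $\{y_j\}$ and $\{c_j\}$.

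For case (1) I first place $N\sim R^d$ uniformly $1$-separated points in the annulus $|y|\sim R$ with $c_j=c$; the template yields $\mathcal{K}_u^{s,t}[m]\sim cR^{s+d/u}$ and $\|T_m f\|_{L^p}\gtrsim cR^{d/p}$ for $p<\infty$, so normalizing the Herz norm gives unboundedness for $s<d/p-d/u$. To reach the sharper range $s<d-d/u$ when $p>1$, I dualize the construction and test $T_m$ against a Schwartz atom of $\dot F_{p'}^{0,q'}$ whose Fourier support is near $\xi_0$; the dimensional exponent becomes $d/p'=d-d/p$, which combined with the previous estimate recovers the full threshold $d-d/u$. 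The case $p=\infty$ is handled through the $\dot F_1^{0,q'}$--$\dot F_\infty^{0,q}$ pairing. For case (5), I instead distribute points on a lacunary tower $|y|\sim 2^l$, $l=1,\dots,M$, with $c_j=2^{-l(s+d/u)}l^{-\alpha}$ for $1/t<\alpha\le 1$; the Herz sum is $\sim(\sum_l l^{-\alpha t})^{1/t}<\infty$, while the $l^1$-type aggregation forced by the Littlewood-Paley characterization of $\dot F_p^{0,q}$ makes the operator norm lower bound involve $\sum_l l^{-\alpha}=+\infty$.

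For the endpoint cases (2), (3), (4) at $s=d-d/u$ I combine the single-annulus construction of case (1) with the multi-scale tower of case (5) and correlate the signs across scales. For case (2), with $p=1<q$, a multi-scale tower whose weights are tuned so that the $l^q$-direction of the Littlewood-Paley norm accumulates while the $l^1$-direction in Herz stays bounded gives the blow-up. Case (3) follows by duality from case (2) via $(\dot F_1^{0,q'})^{\ast}\hookrightarrow\dot F_\infty^{0,q}$, after replacing $m$ by $\overline m$ (the Herz condition is stable under this). For case (4) I superimpose a randomized single-annulus construction on the multi-scale tower and use Khintchine's inequality on the $L^p$ side, tuning the sign distribution so that the gain appears exactly on the critical line $|1/p-1/q|=1-1/u$. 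The main obstacle I expect is case (4): away from the critical line the mismatch between the Herz and operator norms is a genuine power of $N$ and easy to exhibit, but on the line itself any loss ruins the example, so the construction must saturate both bounds simultaneously, which forces a Hadamard-type deterministic sign choice or a delicate Khintchine computation inside the Triebel-Lizorkin norm together with careful tracking of the Fourier support needed for the square-function characterization.
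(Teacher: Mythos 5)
There is a genuine gap, and it sits exactly at the heart of the theorem: the endpoint cases (2)--(4). Your basic template is a multiplier supported in a single frequency annulus (only the scale $k=0$ survives against $\varphi$), and your lower bounds come from testing on data whose Fourier support lies in a fixed annulus, for which $\Vert \cdot\Vert_{\dot{F}_p^{0,q}}\approx\Vert\cdot\Vert_{L^p}$. For such multipliers the $\dot{F}_p^{0,q}$ operator norm is equivalent to an $L^p$ statement on annulus-supported functions and is therefore completely insensitive to $q$; but the thresholds in (2)--(4) are $q$-dependent, and on the critical line $s=d-d/u$ with $t\le 1$ (the only regime in which (2)--(4) carry content beyond (5)) \emph{every} multiplier with $\mathcal{K}_u^{d-d/u,t}[m]<\infty$ is bounded there, since $K_u^{d-d/u,1}\hookrightarrow K_1^{0,1}$ gives $m_k^{\vee}\in L^1$ uniformly (this is Theorem \ref{multiplierbesov}(2), and for annulus-supported data the Besov and Triebel--Lizorkin norms coincide with $L^p$). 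So no choice of signs, weights, or spatially lacunary towers \emph{within one frequency scale} can produce the failure at $|1/p-1/q|\ge 1-1/u$; the "combine the single-annulus construction with the multi-scale tower and correlate signs" step cannot work as described, because your tower in case (5) is a tower in space, not in frequency. What is actually needed (and what the paper does) is a multiplier with one modulated bump per dyadic frequency $2^{\zeta_n}$, each translated by a slowly growing lattice point $\lambda_{\zeta_n}$ (or, for $p=\infty$, dilated and translated by $2^{\zeta_n}\zeta_n^{\alpha}e_1$), tested against lacunary-frequency test functions: $T_m$ then relocates coincident bumps to pairwise disjoint positions (or vice versa), converting $\ell^q$-aggregation over $k$ into $\ell^p$-aggregation in $x$, and the translation distance costs only the $\zeta_n^{s/d}$ factor the Herz weight allows --- a deterministic mechanism, with no Khintchine argument, that produces exactly the line $|1/p-1/q|=1-1/u$.

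Two further problems. First, even in case (1) your counting argument ($N\sim R^d$ separated points in one annulus, then "dualizing" the test) only yields unboundedness for $s<d\max(1/p,1/p')-d/u$, which for $1<p<\infty$ is strictly smaller than the claimed range $s<d-d/u$; to cover it you would either need the multi-frequency construction above or the separate observation that $s<d-d/u$ is compatible with $m\notin L^{\infty}$ (a kernel of the type $(1+|x|)^{-(s+d/u)}$ up to logarithms), which you do not invoke. Second, deducing (3) from (2) "by duality" runs in the wrong direction: from boundedness on $\dot{F}_{\infty}^{0,q}$ one can only recover boundedness on the predual $\dot{F}_1^{0,q'}$ via the norming property of the duality (\ref{dual}), which requires $q'<\infty$, so at best you reach (3) for $1<q<\infty$ and necessarily miss the endpoint $q=1$ (where $(\dot{F}_1^{0,\infty})'$ is unavailable). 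The paper proves the $p=\infty$ cases directly with the second multiplier $m_{\alpha}^{(2)}$ and uses duality only in the safe direction, to obtain the $1<q<\infty$ part of (2) from (3). Your outline for case (5), by contrast, is essentially the paper's route (a single-scale construction with logarithmic weights, as in Theorem \ref{sharpbesov}(2)), and that part is fine modulo routine verification of the Herz computation.
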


\begin{remark}
Theorem \ref{sharpbesov} proves the sharpness of Theorem \ref{multiplierbesov}, Theorem \ref{sharpthm0} does that of Theorem \ref{multipliertheorem0} and \ref{multipliertheorem2}, and Theorem \ref{sharpthm2} does that of Theorem \ref{multipliertheorem3}.
The sharpness of Theorem \ref{multipliertheorem1} is obtained from Theorem \ref{sharpthm1} except the case $p=\infty$ and $q<1$.
One does not have a conclusion for $p=\infty$ and $q<1$.

\end{remark}

\section{\textbf{Use of weighted Herz spaces in a limiting case}}\label{weight}

We are mainly interested in  the endpoint case, but when $s=0$, 
according to Theorem \ref{sharpthm0},  there exists a multiplier $m$ so that $\mathcal{K}_u^{0,u}[m]<\infty$ for $0<u\leq\min{(1,p,q)}$ and $T_m$ is not bounded on $\dot{F}_p^{0,q}$, $p\not= q$.
In this section we provide additional results for multipliers by replacing the condition $\mathcal{K}_u^{0,u}[m]<\infty$ by slightly stronger condition on $m$. \\

 Suppose that $w:\{0,1,2,\dots\}\to [1,\infty)$ satisfies $1\leq w(l)\leq w(l+1)<\infty$. For $0<u\leq \infty$ let ${K}_u^{0,u}(w)$ be the collection of all $f\in L^u_{loc}$ for which 
\begin{eqnarray*}
\Vert f\Vert_{{K}_u^{0,u}(w)}:=\Big( \sum_{l=0}^{\infty}{\Vert f\Vert_{L^u(B_l)}^u w(l)^u}\Big)^{1/u}<\infty.
\end{eqnarray*} 
Let
\begin{eqnarray*}
\mathcal{K}_u^{0,u}(w)[m]:=\sup_{k\in\mathbb{Z}}{\big\Vert \big( m(2^k\cdot)\varphi\big)^{\vee} \big\Vert_{{K}_u^{0,u}(w)}},
\end{eqnarray*}
and \begin{eqnarray*}
\mathcal{B}_{p,q}(w):=\Big(\sum_{l=0}^{\infty}w(l)^{-\frac{1}{|1/p-1/q|}}\Big)^{|1/p-1/q|}<\infty.
\end{eqnarray*} 
Then Baernstein and Sawyer \cite{Ba_Sa} proved that
if $\mathcal{B}_{1,2}(w)<\infty$ and $\mathcal{K}_1^{0,1}(w)[m]<\infty$
then \begin{eqnarray*}
\big\Vert T_mf\big\Vert_{H^1}\lesssim \mathcal{B}_{1,2}(w)\mathcal{K}_1^{0,1}(w)[m]\Vert f\Vert_{H^1}.
\end{eqnarray*}
Seeger \cite{Se2} generalized this result to $\dot{F}_p^{0,q}$ for $1\leq p, q\leq \infty$ and $p\not= q$, asserting that for $1\leq p,q\leq \infty$, $p\not= q$,  if  $\mathcal{B}_{p,q}(w)<\infty$ and $\mathcal{K}_1^{0,1}(w)[m]<\infty$ then
\begin{eqnarray*}
\Vert T_m\Vert_{\dot{F}_p^{s,q}\to \dot{F}_p^{s,q}}\lesssim \mathcal{B}_{p,q}(w)\mathcal{K}_1^{0,1}(w)[m]\Vert f\Vert_{\dot{F}_p^{0,q}}.
\end{eqnarray*} 

We improve these results by replacing $\mathcal{K}_1^{0,1}(w)[m]<\infty$ by $\mathcal{K}_u^{0,u}(w)[m]<\infty$ for $0<u\leq \min{(1,p,q)}$ and extending them to $0<p,q\leq \infty$.

\begin{theorem}\label{weightmultiplier}
Suppose $0<p,q\leq \infty$, $p\not= q$, and $0<u\leq \min{(1,p,q)}$. Let $w$ be an increasing positive function such that $\mathcal{B}_{p,q}(w)<\infty$. If $m\in L^{\infty}$ satisfies $\mathcal{K}_u^{0,u}(w)[m]<\infty$, then $T_m$ is bounded on $\dot{F}_p^{0,q}$. Moreover, in the case 
\begin{eqnarray*}
\big\Vert T_mf\big\Vert_{\dot{F}_p^{0,q}}\lesssim \mathcal{B}_{p,q}(w)\mathcal{K}_u^{0,u}(w)[m]\Vert f\Vert_{\dot{F}_p^{0,q}}.
\end{eqnarray*}
\end{theorem}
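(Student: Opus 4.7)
My plan is to follow the general template of Baernstein--Sawyer and Seeger, adapted to the weaker $L^u$-type kernel hypothesis with $u \le \min(1,p,q)$ and extended to the full range $0 < p, q \le \infty$. By the standard Littlewood--Paley characterization of $\dot F_p^{0,q}$ (and the Carleson-measure formulation when $p = \infty$), the problem reduces to controlling $\bigl\|\bigl(\sum_k |\Pi_k T_m f|^q\bigr)^{1/q}\bigr\|_{L^p}$. I write $\Pi_k T_m f = K_k * \widetilde\Pi_k f$ with $K_k = (m\widehat{\phi_k})^\vee$ and $\widetilde\Pi_k$ a slightly thickened projection, and decompose the kernel dyadically in the space variable via a smooth radial partition of unity, $K_k = \sum_{l \ge 0} K_{k,l}$, with $K_{k,l}$ supported in $\{|x|\sim 2^{l-k}\}$. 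The smooth cutoff ensures that each $K_{k,l}$ is essentially band-limited at scale $2^k$, and by scaling $\|K_{k,l}\|_{L^u}$ is comparable to $2^{kd(1-1/u)}\|h_k\|_{L^u(B_l)}$ for $h_k := (m(2^k\cdot)\varphi)^\vee$.

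The key single-piece estimate combines Nikol'ski\u\i's inequality $\|K_{k,l}\|_{L^1} \lesssim 2^{kd(1/u-1)}\|K_{k,l}\|_{L^u}$ (available since $K_{k,l}$ is quasi-band-limited at $2^k$, and needed because Young's inequality fails for $u<1$) with Peetre's maximal function inequality applied to the frequency-localized $\widetilde\Pi_k f$. The hypothesis $u \le \min(1,p,q)$ is essential: it enables simultaneously the $u$-quasi-triangle inequality $|\sum_l a_l|^u \le \sum_l |a_l|^u$, Minkowski's inequality in $\ell^{q/u}$ and $L^{p/u}$, and the Fefferman--Stein vector-valued maximal inequality on $L^p(\ell^q)$ for $\mathcal{M}_u g := (M|g|^u)^{1/u}$. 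Together these bring the $l$-sum outside and separate the kernel from the function part, reducing the problem to an $l$-sum whose summand involves $\|h_k\|_{L^u(B_l)}$ and $\|f\|_{\dot F_p^{0,q}}$.

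The final $l$-summation is then performed by H\"older's inequality with exponent $r = 1/|1/p-1/q|$: the sequence $\{w(l)^{-1}\}_l$ has $\ell^r$-norm exactly $\mathcal{B}_{p,q}(w)$, and is paired against a weighted sequence controlled in $\ell^u$ (and hence in $\ell^{r'}$ via the embedding $\ell^u \hookrightarrow \ell^{r'}$) by the hypothesis $\mathcal{K}_u^{0,u}(w)[m]$. The case $p = \infty$ requires replacing the Fefferman--Stein step with its Carleson-measure analogue over dyadic cubes, which forces a localized variant of the square-function estimate.

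The main technical obstacle I anticipate is the careful bookkeeping of three scales --- the frequency scale $2^k$ of $\Pi_k$, the spatial scale $2^{l-k}$ of the support of $K_{k,l}$, and the weight index $l$ --- so that the geometric factors arising in the pointwise/Peetre estimates combine precisely with $w(l)^{-1}$ in the H\"older step to reproduce $\mathcal B_{p,q}(w)$ without extraneous loss. This is particularly delicate in the $p = \infty$ regime, where the $l$-sum must be carried out inside the supremum over dyadic cubes, and in the $u<1$ regime, where the Nikol'ski\u\i-based substitute for Young's inequality demands a smooth (rather than sharp) spatial cutoff to preserve the quasi-band-limited structure of each $K_{k,l}$.
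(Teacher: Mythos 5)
There is a genuine gap at the heart of your plan: the step where you ``separate the kernel from the function part, reducing the problem to an $l$-sum whose summand involves $\Vert h_k\Vert_{L^u(B_l)}$ and $\Vert f\Vert_{\dot F_p^{0,q}}$,'' followed by H\"older in $l$ against $\{w(l)^{-1}\}\in\ell^{1/|1/p-1/q|}$. This presupposes a single-annulus estimate $\big\Vert\big(\sum_k|K_{k,l}\ast\Pi_kf|^q\big)^{1/q}\big\Vert_{L^p}\lesssim\big(\sup_k\Vert h_k\Vert_{L^u(B_l)}\big)\Vert f\Vert_{\dot F_p^{0,q}}$ with constant uniform in $l$, and for $p\neq q$ no such estimate holds. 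Indeed, the paper's own sharpness construction (Theorem \ref{sharptheorem1}, with $w(k)=(1+k)^{|1/p-1/q|}$) produces a multiplier whose kernel at level $k\approx\zeta_n$ is concentrated on the annuli $l\approx 2\zeta_n$ with $\Vert h_k\Vert_{L^u(B_l)}\lesssim(1+l)^{-|1/p-1/q|}$; if the uniform single-piece bound were true, summing over $l$ would give boundedness of $T_m$ whenever $|1/p-1/q|>1$, contradicting that theorem. In effect the $l$-th annular piece costs an extra factor of order $(1+l)^{|1/p-1/q|}$ on $\dot F_p^{0,q}$, which is exactly what the weight is there to pay for, so the weight cannot be spent a second time in an $l$-H\"older at the end. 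Worse, the tools you name would not even reach the polynomial loss: applying Peetre's maximal inequality at scale $2^k$ to values of $\Pi_kf$ at distance $\sim2^{l-k}$ loses $(1+2^k\cdot2^{l-k})^{\sigma}=2^{l\sigma}$ with $\sigma>d/\min(p,q)$, an exponential-in-$l$ factor that no admissible $w$ (which may grow only like $(1+l)^{|1/p-1/q|+\epsilon}$) can absorb. As side issues, when $|1/p-1/q|\geq1$ your exponent $r=1/|1/p-1/q|\leq1$ and the pairing $\ell^r$--$\ell^{r'}$ is not even well posed as stated, and the Fefferman--Stein inequality for $\mathcal M_u$ on $L^p(\ell^q)$ requires $u<\min(p,q)$ strictly, which the hypothesis $u\leq\min(1,p,q)$ does not guarantee.

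The paper's proof uses the weight through a different, geometric mechanism which your sketch does not contain. For $q<p$ one works with the sharp maximal characterization (\ref{lemmasharp}) (and its $\dot F_\infty^{0,q}$ analogue); for a dyadic cube $P$ with $l(P)=2^{-\mu}$ the inner contribution $\mathcal R^{in}_{P,u}$ is handled by the unweighted estimates (\ref{dgdgdg}), (\ref{xbxbxb}), while for the outer contribution the support considerations in (\ref{tftf}) show that only annuli with $l\geq k-\mu$ occur; monotonicity of $w$ then gives $w(k-\mu)\Vert m_k^\vee\Pi_kf(y-\cdot)\Vert_{L^u(B(0,2l(P))^c)}\leq\big(\sum_{l\geq k-\mu}w(l)^u|m_k^\vee\chi_{B_l}(2^k\cdot)|^u\ast|\Pi_kf|^u(y)\big)^{1/u}$, and H\"older is applied in the frequency index $k$ (exponents $p/q$, resp.\ the $\ell^q$ weighting when $p=\infty$), so that $\sum_k w(k-\mu)^{-1/(1/q-1/p)}$ produces exactly $\mathcal B_{p,q}(w)$, after which Minkowski and Young close the estimate with $\Vert f\Vert_{\dot F_p^{0,p}}$ and the embedding $\dot F_p^{0,q}\hookrightarrow\dot F_p^{0,p}$. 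For $0<p\leq1<q$ (more generally $p<q$, $p\leq1$) the same transfer of the weight from $l$ to $k-\mu$ is carried out on the atomic side, with $Q_0$ the atom's cube and the ``mid'' annuli satisfying $2^l\approx2^{j+k-\mu}$, and H\"older with $q/p>1$ in $k$ yields (\ref{wegoal}); the case $1<p<q$ follows by duality. Your proposal would need to be reorganized around this cube-relative decomposition and the $k$-H\"older step to be salvageable.
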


\begin{remark}
Theorem \ref{weightmultiplier} is sharp in the sense that the power $-\frac{1}{|1/p-1/q|}$ in the condition $\mathcal{B}_{p,q}(w)<\infty$ cannot be improved. Indeed, we shall prove the following theorem.
\end{remark}
\begin{theorem}\label{sharptheorem1}
Suppose $0\leq p,q\leq\infty$, $p\not= q$, and $t>0$.
Then there exists an increasing nonnegative sequence $\{w(k)\}$ and $m\in L^{\infty}$ so that
$\sum_{k=0}^{\infty}{w(k)^{-s}}<\infty$ for all $s>\frac{1}{|1/p-1/q|}$,
 $\mathcal{K}_{u}^{0,t}(w)[m]<\infty$, but $T_m$ is not bounded on $\dot{F}_p^{0,q}$.

 \end{theorem}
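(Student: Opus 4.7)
The plan is to construct an explicit weight and multiplier that witness the sharpness of Theorem \ref{weightmultiplier}. Take
\[
w(l) = (1+l)^{|1/p-1/q|},
\]
which satisfies $\sum_{l=0}^{\infty} w(l)^{-s} < \infty$ exactly when $s > 1/|1/p-1/q|$; in particular $\mathcal{B}_{p,q}(w) = \infty$, so Theorem \ref{weightmultiplier} does not apply and a counterexample is a priori possible.

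Define the multiplier as a lacunary sum
\[
m(\xi) = \sum_{j=1}^{\infty} c_j\,\psi(\xi/2^{k_j})\, e^{-2\pi i \xi \cdot y_j},
\]
where $\psi$ is a smooth bump supported in a fixed annulus around the unit sphere, $\{k_j\}$ is an increasing integer sequence with $k_{j+1}-k_j\geq C$ large enough that the frequency supports of the summands are disjoint, and the points $\{y_j\}\subset \mathbb{R}^d$ are chosen with pairwise separation at least $1$ and $|y_j|\leq j^{1/d}$, so that $z_j := 2^{k_j}y_j$ lies in an annulus of dyadic index $l_j\sim j$. With $c_j = 1/w(l_j)$, the dilate $m(2^k\cdot)\varphi$ isolates at most one summand, giving
\[
\mathcal{K}_u^{0,t}(w)[m]\;\lesssim\; \sup_j c_j\,w(l_j)\;\lesssim\;1.
\]

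To refute boundedness of $T_m$ on $\dot{F}_p^{0,q}$ in the case $p<q$, test on
\[
f_N(x) = \sum_{j=1}^{N} a_j\,\phi(x)\, e^{2\pi i \xi_j\cdot x}, \qquad |\xi_j|\sim 2^{k_j},
\]
with $\phi$ a fixed Schwartz bump and coefficients $a_j\geq 0$ to be optimized. One has $\Pi_{k_j} f_N = a_j\phi\cdot e^{2\pi i\xi_j\cdot}$, while $\Pi_{k_j}T_m f_N$ equals (up to a modulation factor) $a_j c_j\phi(\cdot-y_j)$; the unit separation of the $y_j$'s makes the bumps in $T_m f_N$ have pairwise disjoint supports, so that the $\ell^q$-sum inside the Triebel--Lizorkin norm collapses pointwise to a single term, giving
\[
\|f_N\|_{\dot{F}_p^{0,q}}\sim\Bigl(\sum_j a_j^q\Bigr)^{1/q},\qquad \|T_m f_N\|_{\dot{F}_p^{0,q}}\sim\Bigl(\sum_j a_j^p c_j^p\Bigr)^{1/p}.
\]
A Lagrange-multiplier computation with $a_j\propto c_j^{p/(q-p)}$ then yields
\[
\frac{\|T_m f_N\|_{\dot{F}_p^{0,q}}}{\|f_N\|_{\dot{F}_p^{0,q}}}\;\sim\;\Bigl(\sum_{j=1}^N c_j^{pq/(q-p)}\Bigr)^{|1/p-1/q|}=\Bigl(\sum_{j=1}^N\frac{1}{1+l_j}\Bigr)^{|1/p-1/q|}\gtrsim(\log N)^{|1/p-1/q|}\to\infty.
\]

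The case $p>q$ with $1\leq q<p\leq\infty$ follows from the previous case by the duality $(\dot{F}_p^{0,q})^{\ast}=\dot{F}_{p'}^{0,q'}$ together with the identity $|1/p-1/q|=|1/p'-1/q'|$; for the remaining quasi-Banach range one runs the symmetric version of the argument with a spatially spread test function and a multiplier whose phase translates each Littlewood--Paley piece back to the origin. The main obstacle is the calibration of the indices: the harmonic sum $\sum_j 1/(1+l_j)$ must genuinely diverge (to mirror $\mathcal{B}_{p,q}(w)=\infty$) while every single-scale Herz norm of $m$ stays bounded by $1$. The choice $l_j\sim j$ achieves this precisely, and the resulting $(\log N)^{|1/p-1/q|}$ blow-up matches the logarithmic divergence of $\mathcal{B}_{p,q}(w)$ at its borderline. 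The endpoint cases $p=\infty$ or $q=\infty$ reuse this mechanism but need to be phrased in terms of the dyadic-cube definition of the $\dot{F}_{\infty}^{0,q}$ norm from Section \ref{mainsection}.
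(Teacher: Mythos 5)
Your construction is essentially the paper's own proof: the same weight $w(l)=(1+l)^{|1/p-1/q|}$, a lacunary multiplier whose modulated bumps carry coefficients equal to $1/w$ evaluated at the dyadic shell where the rescaled kernel $\big(m(2^k\cdot)\varphi\big)^{\vee}$ concentrates (the paper uses translations $2^{\zeta_n}e_1$ so that shell is $\approx 2\zeta_n$, you use lattice-type points $y_j$ so it is $\approx k_j$), and the same space-versus-frequency separation that pits an $\ell^p$ sum against an $\ell^q$ sum, with the borderline sum $\sum_j (1+l_j)^{-1}$ producing the blow-up — the paper just realizes the divergence through explicit log-corrected test functions $f,g,h$ for $p<q$, $q<p<\infty$, $q<p=\infty$ instead of your H\"older-optimized families plus duality/symmetry for $q<p$. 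One caveat to make explicit: $l_j\sim j$ is forced by taking $k_j$ linear in $j$ (e.g.\ $k_j=Cj$), not by the bound $|y_j|\leq j^{1/d}$, since the shell index of $2^{k_j}y_j$ is $\approx k_j+\log_2|y_j|$; if the lacunary gaps $k_{j+1}-k_j$ were allowed to grow, the sum $\sum_j(1+l_j)^{-1}$ would converge and the counterexample would fail, so the linear choice of $k_j$ should be stated as part of the construction.
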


\section{\textbf{ Preliminary }}\label{prelim}

Let $\mathcal{D}_k$ be  the subset of $\mathcal{D}$ consisting of the cubes with side length $2^{-k}$ and for each $Q\in \mathcal{D}$ let  $\chi_Q$ stand for the characteristic function of $Q$.
For $r>0$ let $\mathcal{E}(r)$ be the space of tempered distributions whose Fourier transforms are supported in $\{\xi:|\xi|\leq 2r\}$.

\subsection{Maximal inequalities on $F$-space}

Let $\mathcal{M}$ denote the Hardy-Littlewood maximal operator and for $0<t<\infty$ let $\mathcal{M}_tf:=\big(  \mathcal{M}(|f|^t) \big)^{1/t}$. 
Then Fefferman-Stein's vector valued maximal inequality says that
for $0<r<p,q<\infty$
\begin{eqnarray}\label{hlmax}
\Big\Vert  \Big(\sum_{k}{(\mathcal{M}_{r}f_k)^q}\Big)^{1/{q}} \Big\Vert_{L^p} \lesssim  \Big\Vert \Big( \sum_{k}{|f_k|^q}  \Big)^{1/{q}}  \Big\Vert_{L^p}.
\end{eqnarray} 
Note that (\ref{hlmax}) also holds when $q=\infty$.

A crucial tool in theory of function space is a maximal operator introduced by Peetre \cite{Pe}.
For $k\in\mathbb{Z}$ and $\sigma>0$ define  \begin{eqnarray*}
\mathfrak{M}_{\sigma,2^k}f(x):=\sup_{y\in\mathbb{R}^d}{\frac{|f(x-y)|}{(1+2^k|y|)^{\sigma}}}.
\end{eqnarray*}
As shown in \cite{Pe} one has the majorization \begin{eqnarray*}
\mathfrak{M}_{d/r,2^k}f(x)\lesssim \mathcal{M}_rf(x), 
\end{eqnarray*}  provided that $f\in \mathcal{E}(2^k)$. 
Then via (\ref{hlmax}) the following maximal inequality holds.
Suppose $0<p<\infty$ and $0<q\leq \infty$. Then for  $f_k\in \mathcal{E}(2^k)$,
\begin{eqnarray}\label{max}
\Big\Vert  \Big(\sum_{k}{(\mathfrak{M}_{d/r,2^k}f_k)^q}\Big)^{1/{q}} \Big\Vert_{L^p} \lesssim  \Big\Vert \Big( \sum_{k}{|f_k|^q}  \Big)^{1/{q}}  \Big\Vert_{L^p} ~\text{for}~ r<\min{\big\{p,q\big\}}.
\end{eqnarray}

For $\epsilon\geq0$, $r>0$, and $k\in\mathbb{Z}$, we now introduce a variant of Hardy-Littlewood maximal operators $\mathcal{M}_{r}^{k,\epsilon}$, defined by
\begin{eqnarray*}
\mathcal{M}_r^{k,\epsilon}f(x)&:=&\sup_{2^kl(Q)\leq 1,x\in Q}{\Big( \frac{1}{|Q|}\int_{Q}{|f(x)|^r}dx  \Big)^{1/r}}\\
  &&+\sup_{2^{k}l(Q)>1,x\in Q}{\big(2^kl(Q)\big)^{-\epsilon}\Big( \frac{1}{|Q|}\int_Q{|f(x)|^r}dx  \Big)^{1/r}}. \nonumber
\end{eqnarray*}
Then it is proved in \cite{Park2} that for $r<t$ and  $f\in\mathcal{E}(2^k)$
\begin{eqnarray*}
\mathfrak{M}_{d/r,2^k}f(x)\lesssim \mathcal{M}_t^{k,d/r-d/t}f(x)\lesssim \mathcal{M}_tf(x).
\end{eqnarray*} 
Furthermore the following maximal inequalities hold.
\begin{lemma}\cite{Park2}\label{maximal1}
Let $0<r<q<\infty$ and $\epsilon>0$. For $k\in\mathbb{Z}$ let $f_k\in\mathcal{E}(2^{k+h})$ for some $h\in\mathbb{Z}$. Let $P\in\mathcal{D}$ and $l(P)=2^{-\mu}$.
Then 
\begin{eqnarray*}
\sup_{P\in\mathcal{D}_{\mu}}{\Big(  \frac{1}{|P|}\int_P{  \sum_{k=\mu}^{\infty}{  \big( \mathcal{M}_r^{k,\epsilon}f_k(x)  \big)^{q}      }    }dx  \Big)^{1/q}} \lesssim_h \sup_{R\in\mathcal{D}_{\mu}}{\Big(  \frac{1}{|R|}\int_R{  \sum_{k=\mu}^{\infty}{   |f_k(x)|^q   }    }dx  \Big)^{1/q}}. 
\end{eqnarray*}
Here, the implicit constant of the inequality is independent of $\mu$.
\end{lemma}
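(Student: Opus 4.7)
The plan is to fix a cube $P\in\mathcal{D}_\mu$ with $l(P)=2^{-\mu}$ and, for each $k\geq\mu$ and $x\in P$, split $\mathcal{M}_r^{k,\epsilon}f_k(x)=A_1^k(x)+A_2^k(x)$ along the threshold $l(Q)=2^{-k}$, then attack the two pieces separately: a localized vector-valued Fefferman-Stein bound for $A_1^k$, and a dyadic-enlargement argument exploiting the $\epsilon$-weight for $A_2^k$.

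For the small-cube piece $A_1^k$, I would observe that since $k\geq\mu$, every cube $Q\ni x\in P$ with $l(Q)\leq 2^{-k}\leq l(P)$ lies in the fixed dilation $P^*:=3P$, so $A_1^k(x)\leq\mathcal{M}_r(f_k\chi_{P^*})(x)$. Applying the Fefferman-Stein inequality (\ref{hlmax}) with $p=q$ (permitted since $r<q$) will yield
$$\int_P\sum_{k\geq\mu}(A_1^k)^q\,dx\lesssim\int_{P^*}\sum_{k\geq\mu}|f_k|^q\,dy,$$
and after dividing by $|P|\sim|P^*|$ and noting that $P^*$ is covered by a bounded number of elements of $\mathcal{D}_\mu$, this contribution is controlled by the right-hand side of the lemma.

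For $A_2^k$ the approach is to stratify by the dyadic scale $l(Q)\sim 2^{-i}$ with $i<k$, absorbing the weight $(2^k l(Q))^{-\epsilon}\sim 2^{-\epsilon(k-i)}$, and converting the $\ell^1$-sum in $i$ into an $\ell^q$-sum via H\"older (for $q\geq 1$) or the $q$-triangle inequality (for $q<1$); this produces some $\epsilon'>0$ with
$$(A_2^k(x))^q\lesssim\sum_{i<k}2^{-\epsilon'(k-i)}\Big(\sup_{\substack{x\in Q\\ l(Q)\sim 2^{-i}}}\tfrac{1}{|Q|}\int_Q|f_k|^r\,dy\Big)^{q/r}.$$
For $\mu\leq i<k$ the relevant cubes again lie in $P^*$, so this sub-sum is swallowed into the $A_1$ estimate (using $\sum_{i=\mu}^{k-1}2^{-\epsilon'(k-i)}\lesssim 1$). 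For $i<\mu$ every admissible $Q$ lies in $3\tilde P_i$, where $\tilde P_i$ is the unique cube of $\mathcal{D}_i$ containing $P$, and since $|Q|\sim|\tilde P_i|$, Jensen's inequality applied to the convex function $t\mapsto t^{q/r}$ (valid because $q>r$) gives $\big(|Q|^{-1}\int_Q|f_k|^r\big)^{q/r}\lesssim|3\tilde P_i|^{-1}\int_{3\tilde P_i}|f_k|^q\,dy$ uniformly in $x\in P$.

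Substituting $j=\mu-i\geq 1$ and using $2^{-\epsilon'(k-i)}\leq 2^{-\epsilon' j}$, the contribution from $i<\mu$ will reduce to
$$\sum_{j\geq 1}2^{-\epsilon' j}\,\frac{1}{|3\tilde P_{\mu-j}|}\int_{3\tilde P_{\mu-j}}\sum_{k\geq\mu}|f_k|^q\,dy.$$
The final observation is that any average of $\sum_k|f_k|^q$ on a coarse cube in $\mathcal{D}_{\mu-j}$ is dominated by $\sup_{R\in\mathcal{D}_\mu}|R|^{-1}\int_R\sum_k|f_k|^q\,dy$, because partitioning the coarse cube into cubes of $\mathcal{D}_\mu$ and taking the supremum realizes the average as a convex combination bounded by this supremum; the geometric sum in $j$ then converges and closes the estimate. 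The main obstacle will be organizing the bookkeeping of the spatial dyadic scale $i$ against the frequency index $k$ while keeping the right-hand side supremum at the fine scale $\mathcal{D}_\mu$; in particular, the $\epsilon$-decay must absorb both the linear growth of coarser cube volumes and the passage from $L^r$-averages in $\mathcal{M}_r^{k,\epsilon}$ to the $L^q$-averages appearing on the right, which is precisely what the Jensen step accomplishes.
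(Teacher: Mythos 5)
Your argument is correct, and each step checks out: the split of $\mathcal{M}_r^{k,\epsilon}f_k$ at the threshold $2^kl(Q)=1$, the control of the small-cube piece by $\mathcal{M}_r(f_k\chi_{3P})$ together with the $L^{q/r}$-boundedness of the Hardy--Littlewood maximal operator (this is exactly where $r<q$ enters), the stratification of the large-cube piece in the scale $i$ with the weight $2^{-\epsilon(k-i)}$ absorbed through H\"older or the $q$-triangle inequality, the Jensen step $\big(|Q|^{-1}\int_Q|f_k|^r\big)^{q/r}\le |Q|^{-1}\int_Q|f_k|^q\lesssim |3\widetilde{P}_i|^{-1}\int_{3\widetilde{P}_i}|f_k|^q$ (legitimate since $|Q|\gtrsim|\widetilde{P}_i|$ and $Q\subset 3\widetilde{P}_i$), and finally the observation that $3\widetilde{P}_{\mu-j}$ is a disjoint union of cubes of $\mathcal{D}_\mu$, so its average of $\sum_{k\ge\mu}|f_k|^q$ is dominated by the supremum on the right-hand side, with the geometric factor $2^{-\epsilon' j}$ providing summability over the coarse scales. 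All constants depend only on $d,r,q,\epsilon$, uniformly in $\mu$ and $P$. I cannot compare this with an in-paper argument, because the paper does not prove this lemma; it is quoted from \cite{Park2}. Two remarks are worth making. First, your proof never uses the hypothesis $f_k\in\mathcal{E}(2^{k+h})$, so you obtain the stated inequality with a constant independent of $h$ and for arbitrary locally integrable $f_k$; this only strengthens the lemma (the spectral assumption is genuinely needed for the companion Lemma \ref{maximal2} involving the Peetre maximal operator, not here where $r<q$ and $\epsilon>0$ already suffice). Second, your notation $P^*=3P$ clashes with the paper's later use of $P^*$ (union of $P$ with its neighbors); harmless, but worth renaming if this were to be inserted into the text.
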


\begin{lemma}\cite{Park2}\label{maximal2}
Let $0<r<q<\infty$.
For $k\in\mathbb{Z}$ let $f_k\in\mathcal{E}(2^{k+h})$ for some $h\in\mathbb{Z}$. Let $P\in\mathcal{D}$ and $l(P)=2^{-\mu}$.
Then 
\begin{eqnarray*}
\sup_{P\in\mathcal{D}_{\mu}}{\Big(  \frac{1}{|P|}\int_P{  \sum_{k=\mu}^{\infty}{  \big(\mathfrak{M}_{d/r,2^k}f_k(x) \big)^{q}      }    }dx  \Big)^{1/q}} \label{boundee}\lesssim \sup_{R\in\mathcal{D}_{\mu}}{\Big(  \frac{1}{|R|}\int_R{  \sum_{k=\mu}^{\infty}{   |f_k(x)|^q   }    }dx  \Big)^{1/q}}. \label{bounder}
\end{eqnarray*}
Here, the implicit constant of the inequality is independent of $\mu$.
\end{lemma}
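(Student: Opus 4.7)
The strategy is to place the weight $w$ right at the threshold where $\mathcal{B}_{p,q}(w)$ diverges and to construct a scale-invariant multiplier whose unboundedness is read off from a carefully designed family of test functions. Set $\alpha:=1/|1/p-1/q|$ and take $w(l):=(l+1)^{1/\alpha}$; then $\sum_{l\ge 0}w(l)^{-s}=\sum_{l}(l+1)^{-s/\alpha}$ converges precisely for $s>\alpha$, matching the claim, while $\mathcal{B}_{p,q}(w)=\infty$ so that Theorem \ref{weightmultiplier} is inapplicable.

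For the multiplier, fix a Schwartz bump $\eta$ with $\widehat{\eta}$ supported in $\{1/2\le|\xi|\le 2\}$ and equal to $1$ on $\{3/4\le|\xi|\le 3/2\}$, choose unit directions $\{\theta_j\}$ on the sphere (spaced so their rays do not interact), and set $y_j:=2^j\theta_j\in B_j$. Define
\[
 m(\xi):=\sum_{k\in\mathbb{Z}}\eta(2^{-k}\xi)\sum_{j\ge 1} a_j\,e^{2\pi i\langle y_j,\,2^{-k}\xi\rangle},
\]
with positive coefficients $a_j$ to be chosen. Scale-invariance $m(2^k\xi)=m(\xi)$ reduces $\mathcal{K}_u^{0,t}(w)[m]$ to the single weighted Herz norm $\|(m\varphi)^{\vee}\|_{K_u^{0,t}(w)}$, and the Schwartz decay of $\eta^{\vee}$ lets one estimate this by $\bigl(\sum_{j}a_j^{t}w(j)^{t}\bigr)^{1/t}=\bigl(\sum_{j}a_j^{t}(j+1)^{t/\alpha}\bigr)^{1/t}$. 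Taking $a_j\sim(j+1)^{-\gamma}$ with $\gamma$ slightly above $1/\alpha+1/t$ makes this finite. The $L^{\infty}$ bound $\|m\|_{\infty}<\infty$ is enforced by $\sum_j a_j<\infty$ when $\gamma>1$; otherwise one inserts random signs on the $a_j$ and uses Khintchine's inequality, or replaces the bump $\eta$ by a family $\{\eta_j\}$ with pairwise disjoint Fourier supports inside the annulus so that $\|m\|_{\infty}\le\sup_j a_j\|\eta_j\|_{\infty}$.

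For unboundedness the simplest test function has $\widehat{f}$ supported strictly inside $\{3/4\le|\xi|\le 3/2\}$, so only the $k=0$ slice of $m$ acts: $T_mf\approx\sum_j a_j f(\cdot+y_j)$. With the directions $\{\theta_j\}$ arranged so the translates have pairwise disjoint supports, and since $\widehat{T_mf}$ remains in a single annulus, $\|T_mf\|_{\dot{F}_p^{0,q}}=\|T_mf\|_{L^p}\sim\bigl(\sum_j a_j^{p}\bigr)^{1/p}\|f\|_{L^p}$, which diverges as soon as $\gamma\le 1/p$. This closes the argument whenever $1/\alpha+1/t<1/p$. The main obstacle is the remaining range $1/\alpha+1/t\ge 1/p$ (typically $t$ small and $q$ large, where a single-scale test function cannot detect the $\ell^q$-structure of $\dot{F}_p^{0,q}$). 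I would then use a multi-scale test function $f_N:=\sum_{k=1}^{N}g_k$ with $g_k$ frequency-localized at scale $2^k$ and spatially concentrated in a common cube, so that $T_mf_N=\sum_{k,j}a_j g_k(\cdot+2^{-k}y_j)$ spreads mass across all dyadic scales in parallel. Computing the $\dot{F}_p^{0,q}$-norms of both $f_N$ and $T_mf_N$ via the Frazier--Jawerth $\varphi$-transform decomposition, the translates generated by $T_m$ force an $\ell^q$-aggregation in the definition of the norm to be effectively replaced by an $\ell^p$-aggregation, producing a growth ratio of order $N^{|1/p-1/q|}$ and hence unboundedness. The endpoint cases $p=\infty$ or $q=\infty$ additionally demand test functions engineered for the Carleson/BMO-type definition of $\|\cdot\|_{\dot{F}_\infty^{0,q}}$, exploiting cube-localized atoms, but the multiplier $m$ itself requires no modification.
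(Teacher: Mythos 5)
Your proposal does not address the statement it was supposed to prove. The statement is Lemma \ref{maximal2}: a vector-valued maximal inequality asserting that for band-limited functions $f_k\in\mathcal{E}(2^{k+h})$ one has, uniformly in $\mu$,
$\sup_{P\in\mathcal{D}_{\mu}}\big(\frac{1}{|P|}\int_P\sum_{k=\mu}^{\infty}(\mathfrak{M}_{d/r,2^k}f_k)^{q}\big)^{1/q}\lesssim\sup_{R\in\mathcal{D}_{\mu}}\big(\frac{1}{|R|}\int_R\sum_{k=\mu}^{\infty}|f_k|^{q}\big)^{1/q}$,
i.e.\ the $p=\infty$, Carleson-type analogue of the Fefferman--Stein/Peetre inequality (\ref{max}), which is exactly the tool the paper uses for the $\dot{F}_{\infty}^{0,q}$ cases and which it quotes from \cite{Park2}. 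What you wrote instead is a sketch of a counterexample construction in the spirit of Theorem \ref{sharptheorem1}: a threshold weight $w(l)=(l+1)^{1/\alpha}$ for which $\mathcal{B}_{p,q}(w)=\infty$, a lacunary multiplier built from translated bumps, and test functions showing $T_m$ is unbounded on $\dot{F}_p^{0,q}$. Nothing in that argument involves the Peetre maximal operator $\mathfrak{M}_{d/r,2^k}$, the restriction to dyadic cubes of side length $2^{-\mu}$, the hypothesis $f_k\in\mathcal{E}(2^{k+h})$, or the uniformity in $\mu$, so it cannot serve as a proof of Lemma \ref{maximal2}; it is an answer to a different question.

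For the lemma itself, a proof must exploit the band-limitation of each $f_k$ to majorize $\mathfrak{M}_{d/r,2^k}f_k$ pointwise by a local maximal operator (as with $\mathcal{M}_r^{k,\epsilon}$ in Lemma \ref{maximal1}), and then handle, for each $P\in\mathcal{D}_{\mu}$ and each $k\geq\mu$, the contribution to the maximal function coming from outside a fixed dilate of $P$, extracting summable decay in $k-\mu$ so that the bound is independent of $\mu$; this localization step is the substance of the argument in \cite{Park2} and is entirely absent from your text. Incidentally, even judged as an attempt at Theorem \ref{sharptheorem1}, your sketch is incomplete: the range $1/\alpha+1/t\geq 1/p$ is only described programmatically, the $L^{\infty}$ control of $m$ is deferred to unverified alternatives, and the paper's construction instead separates frequency scales $2^{\zeta_n}$ and reuses the explicit test functions (\ref{ffunction}), (\ref{gfunction}), (\ref{hfunction}) together with the weighted Herz estimates modeled on (\ref{estfff}).
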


As an application of Lemma \ref{maximal2}, for $\mu\in\mathbb{Z}$, $0<q_1<q_2<\infty$, and $\mathbf{f}:=\{f_k\}_{k\in\mathbb{Z}}$ one has
\begin{eqnarray*}
\mathcal{V}_{\mu,q_2}[\mathbf{f}]\lesssim \mathcal{V}_{\mu,q_1}[\mathbf{f}],
\end{eqnarray*} provided that each $f_k$ is defined as in Lemma \ref{maximal2}, where
\begin{eqnarray*}
\mathcal{V}_{\mu,q}[\mathbf{f}]:= \sup_{P\in\mathcal{D},l(P)\leq 2^{-\mu}}{\Big(  \frac{1}{|P|}\int_P{  \sum_{k=-\log_2{l(P)}}^{\infty}{   |f_k(x)|^q   }    }dx  \Big)^{1/q}}. \end{eqnarray*}
This definitely implies the embedding $\dot{F}_{\infty}^{0,q_1}\hookrightarrow \dot{F}_{\infty}^{0,q_2}$ for $0<q_1<q_2\leq \infty$.
See \cite{Park2} for more details.

\subsection{$\varphi$-transform of $\dot{F}$-spaces}\label{decomposition}( \cite{Fr_Ja}, \cite{Fr_Ja1}, \cite{Fr_Ja2} )
We define sequence spaces $\dot{f}_p^{\alpha,q}$ associated with $\dot{F}_p^{\alpha,q}$ as the family of 
 sequences of complex numbers $b=\{b_Q\}_{Q\in\mathcal{D}}$ for which
 $$\Vert b \Vert_{\dot{f}_p^{\alpha,q}}=\big\Vert  g^{\alpha,q}(b)  \big\Vert_{L^p}<\infty$$
 where
$$g^{\alpha,q}(b)(x)=\Big(\sum_{Q\in\mathcal{D}}{\big(|Q|^{-\alpha /{d}-1/2}|b_Q|\chi_Q(x)\big)^q}\Big)^{1/q}.$$
Furthermore, for $c>0$ let  $\vartheta$ and $\widetilde{\vartheta}$ be Schwartz functions satisfying 
\begin{eqnarray*}
Supp(\widehat{\vartheta}), Supp(\widehat{\widetilde{\vartheta}})\subset \{\xi : 1/{2}\leq |\xi|\leq 2\}
\end{eqnarray*}
\begin{eqnarray*}
|\widehat{\vartheta}(\xi)|, |\widehat{\widetilde{\vartheta}}(\xi)| \geq c>0 ~\text{for}~ 3/4\leq |\xi|\leq 5/3
\end{eqnarray*}
 \begin{eqnarray*}
 \sum_{k\in\mathbb{Z}}{\overline{\widetilde{\vartheta}_k(\xi)}\vartheta_k(\xi)}=1 , \quad \xi\not=0
 \end{eqnarray*} where $\vartheta_k(x)=2^{kd}\vartheta(2^kx)$  and $\widetilde{\vartheta}_k(x)=2^{kd}\widetilde{\vartheta}(2^kx)$ for $k\in\mathbb{Z}$.
Then the (quasi-)norms in  $\dot{F}_p^{\alpha,q}$ can be characterized by the $\dot{f}_p^{\alpha,q}$ (quasi-)norms as follows.
Suppose $0<p<\infty$, $0<q\leq\infty$, $\alpha\in\mathbb{R}$. 
 Every $f\in \dot{F}_p^{\alpha,q}$ can be decomposed as 
\begin{eqnarray}\label{decomposition1}
f(x)=\sum_{Q\in\mathcal{D}}{v_Q\vartheta^Q(x)} 
\end{eqnarray} where $x_Q$ stands for the lower left corner of $Q$, $\vartheta^Q(x):=|Q|^{1/2}\vartheta_k(x-x_Q)$ for $Q\in\mathcal{D}_k$ and $v_Q:=\langle f,\widetilde{\vartheta}^Q\rangle$.
Moreover, in the case one has \begin{eqnarray*}
\big\Vert v   \big\Vert_{\dot{f}_p^{\alpha,q}} \lesssim \big\Vert  f  \big\Vert_{\dot{F}_p^{\alpha,q}}.
\end{eqnarray*}
The converse estimate also holds.  For any sequence $v=\{v_Q\}_{Q\in\mathcal{D}}$ of complex numbers satisfying $\big\Vert  v \big\Vert_{\dot{f}_p^{\alpha,q}}<\infty$,  $$f(x):=\sum_{{Q\in\mathcal{D}}}{v_Q\vartheta^Q(x)}$$ belongs to $\dot{F}_p^{\alpha,q}$ and to be specific
\begin{eqnarray}\label{decomposition2}
\big\Vert  f  \big\Vert_{\dot{F}_p^{\alpha,q}} \lesssim \big\Vert  v \big\Vert_{\dot{f}_p^{\alpha,q}}.
\end{eqnarray}

\subsection{$\infty$-atoms for $\dot{f}_p^{\alpha,q}$ ( \cite[Chapter 7]{Fr_Ja}, \cite{Fr_Ja2}, \cite[Chap.6.6.3]{Gr} ) }

Let $0<p\leq 1$, $0< q\leq \infty$, and $\alpha\in\mathbb{R}$. A sequence of complex numbers $a=\{a_Q\}_{Q\in\mathcal{D}}$ is called an $\infty$-atom for $\dot{f}_p^{\alpha,q}$ if there exists a dyadic cube $Q_0$ such that 
\begin{eqnarray*}
a_Q=0 \quad \text{if}\quad Q \not\subset Q_0
\end{eqnarray*}
 and \begin{eqnarray}\label{infdef}
\big\Vert  g^{\alpha,q}(a)  \big\Vert_{L^{\infty}}\leq |Q_0|^{-{1}/{p}}.
\end{eqnarray}

Then the following atomic decomposition of $\dot{f}_p^{\alpha,q}$, which is analogous to the atomic decomposition of $H^p$, holds.
\begin{lemma}\label{decomhardy}
Suppose $0<p\leq 1$, $p\leq q\leq\infty$, and $b=\{b_Q\}_{Q\in\mathcal{D}}\in \dot{f}_p^{\alpha,q}$. Then there exist $C_{d,p,q}>0$, a sequence of scalars $\{\lambda_j\}$, and a sequence of $\infty$-atoms $a_j=\{a_{j,Q}\}_{{Q\in\mathcal{D}}}$ for $\dot{f}_p^{\alpha,q}$ so that $$b=\{b_Q\}=\sum_{j=1}^{\infty}{\lambda_j\{a_{j,Q}\}}=\sum_{j=1}^{\infty}{\lambda_j a_j},$$ and $$\Big(\sum_{j=1}^{\infty}{|\lambda_j|^p}\Big)^{{1}/{p}}\leq C_{d,p,q}\big\Vert   b\big\Vert_{\dot{f}_{p}^{\alpha,q}}.$$
Moreoever, it follows that \begin{eqnarray*}\label{equismall}
\big\Vert  b  \big\Vert_{\dot{f}_p^{\alpha,q}}\approx \inf{\Big\{ \Big(\sum_{j=1}^{\infty}{|\lambda_j|^p}\Big)^{{1}/{p}}   :  b=\sum_{j=1}^{\infty}{\lambda_j a_j} ,~ a_j ~\text{is an $\infty$-atom for $\dot{f}_p^{\alpha,q}$}    \Big\}}.
\end{eqnarray*}

\end{lemma}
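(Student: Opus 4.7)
The plan is to adapt the standard Calder\'on--Zygmund/Whitney argument to the sequence space $\dot{f}_p^{\alpha,q}$ by slicing $b$ through the level sets of its square function. For $k\in\mathbb{Z}$ set $\Omega_k:=\{x\in\mathbb{R}^d: g^{\alpha,q}(b)(x)>2^k\}$, which is open and of finite measure since $g^{\alpha,q}(b)\in L^p$. Let $\{Q_{k,j}\}_j$ be the family of maximal dyadic cubes contained in $\Omega_k$. Because $|Q|^{-\alpha/d-1/2}|b_Q|\le g^{\alpha,q}(b)(x)$ for every $x\in Q$ (with supremum interpretation when $q=\infty$), each $Q$ with $b_Q\neq 0$ admits a largest integer $k_Q$ satisfying $Q\subset\Omega_{k_Q}$. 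Writing $\mathcal{B}_k:=\{Q:b_Q\neq 0,\, k_Q=k\}$ produces a disjoint partition of the support of $b$, and each $Q\in\mathcal{B}_k$ lies in a unique Whitney cube $Q_{k,j}$.

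I then set $\lambda_{k,j}:=2\cdot 2^k|Q_{k,j}|^{1/p}$ and define $a_{k,j,Q}:=\lambda_{k,j}^{-1}b_Q$ when $Q\in\mathcal{B}_k$ and $Q\subset Q_{k,j}$, and zero otherwise. After relabeling the pair $(k,j)$ as a single index $j$, the identity $b=\sum_j\lambda_j a_j$ is immediate from the partition property, and the coefficient bound follows from the layer-cake formula:
\begin{equation*}
\sum_{k,j}|\lambda_{k,j}|^p = 2^p\sum_k 2^{kp}|\Omega_k|\lesssim \int_{\mathbb{R}^d}\bigl(g^{\alpha,q}(b)(x)\bigr)^p\,dx = \Vert b\Vert_{\dot{f}_p^{\alpha,q}}^p.
\end{equation*}

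The main step is to verify the $\infty$-atom property \eqref{infdef} for each $a_{k,j}$. Fix $x\in Q_{k,j}$; the cubes of $\mathcal{B}_k$ containing $x$ form a nested chain, so if the sum defining $g^{\alpha,q}(a_{k,j})(x)$ is non-empty there is a smallest such cube $Q^{\ast}\in\mathcal{B}_k$ with $x\in Q^{\ast}$. Since $k_{Q^{\ast}}=k$ is maximal, $Q^{\ast}\not\subset\Omega_{k+1}$, and one picks a witness $y\in Q^{\ast}$ with $g^{\alpha,q}(b)(y)\le 2^{k+1}$. Every $R\in\mathcal{B}_k$ with $x\in R$ must contain $Q^{\ast}$ and hence contains $y$, so
\begin{equation*}
\bigl(\lambda_{k,j}\,g^{\alpha,q}(a_{k,j})(x)\bigr)^q\le \sum_{R\ni y}\bigl(|R|^{-\alpha/d-1/2}|b_R|\bigr)^q = g^{\alpha,q}(b)(y)^q\le 2^{(k+1)q},
\end{equation*}
(with supremum replacement when $q=\infty$), giving $g^{\alpha,q}(a_{k,j})(x)\le |Q_{k,j}|^{-1/p}$, as required.

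The main obstacle is this chain-and-witness step: one has to know that the smallest cube $Q^{\ast}\in\mathcal{B}_k$ through $x$ still sits inside $Q_{k,j}$ (which uses the Whitney maximality of $Q_{k,j}$), and the $q=\infty$ case requires reinterpreting the inner sum as a supremum throughout. The reverse inequality in the ``moreover'' assertion is elementary: by $p$-subadditivity of $\Vert\cdot\Vert_{\dot{f}_p^{\alpha,q}}^p$ (valid since $p\le 1$) one has $\Vert b\Vert_{\dot{f}_p^{\alpha,q}}^p\le \sum_j|\lambda_j|^p\Vert a_j\Vert_{\dot{f}_p^{\alpha,q}}^p$, and any $\infty$-atom $a_j$ supported on $Q_0$ trivially satisfies $\Vert a_j\Vert_{\dot{f}_p^{\alpha,q}}\le \Vert g^{\alpha,q}(a_j)\Vert_{L^\infty}|Q_0|^{1/p}\le 1$.
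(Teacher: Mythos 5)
Your argument is correct and is essentially the standard Frazier--Jawerth atomic decomposition of $\dot{f}_p^{\alpha,q}$ (a Calder\'on--Zygmund stopping-time construction on the level sets $\{g^{\alpha,q}(b)>2^k\}$ with maximal dyadic cubes and the witness point $y\notin\Omega_{k+1}$), which is exactly the proof the paper defers to \cite{Fr_Ja}, \cite{Fr_Ja2}, \cite{Gr} rather than reproducing. The only small point to flag is that the $p$-subadditivity of $\Vert\cdot\Vert_{\dot{f}_p^{\alpha,q}}^p$ used for the reverse inequality requires $p\le\min(1,q)$ rather than just $p\le 1$, but this is ensured by the hypothesis $p\le q$.
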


\subsection{Duality in $\dot{F}_p^{\alpha,q}$}
Let $$S_{\infty}:=\Big\{f\in S: \int{x^{\gamma}f(x)}dx=0 ~\text{for all multi-indices}~\gamma\Big\}.$$
Then $S_{\infty}$ is a subspace of $S$ that inherits the same topology as $S$ and whose dual is $S'/\mathcal{P}$.
Moreover, $S_{\infty}$  is dense in $\dot{F}_p^{\alpha,q}$ if $0<p,q<\infty$.
It is known in \cite[Theorem 5.13, Remark 5.14]{Fr_Ja} that for $1\leq p<\infty$, $1\leq q<\infty$, and $\alpha\in\mathbb{R}$,
\begin{eqnarray}\label{dual}
(\dot{F}_p^{\alpha,q})' = \dot{F}_{p'}^{-\alpha,q'}
\end{eqnarray} where $1/q+1/{q'}=1$  and similarly for $p'$.
In addition, in the case \begin{eqnarray*}
\Vert f\Vert_{\dot{F}_p^{\alpha,q}}\approx\sup{\big\{|\langle f,g\rangle|:g\in S_{\infty}~\text{with}~\Vert g\Vert_{\dot{F}_{p'}^{-\alpha,q'}}\leq 1\big\}}.
\end{eqnarray*}

\subsection{Interpolation theory}
We refer to \cite[Chapter 6]{Fr_Ja} for real interpolation and \cite[Chapter 8]{Fr_Ja} for complex interpolation.

By using Peetre's real interpolation method, so called $K$-method,
it is known in \cite[Chapter 6]{Fr_Ja} that for fixed $s\in \mathbb{R}$ and $0<q\leq \infty$, one has
\begin{eqnarray*}
(\dot{F}_{p_0}^{s,q},\dot{F}_{p_1}^{s,q})_{\theta,p}=\dot{F}_p^{s,q}, \quad \text{if}~1/p=(1-\theta)/p_0+\theta/p_1 
\end{eqnarray*} where $0<p_0<p<p_1\leq \infty$ and $0<\theta<1$.
This implies the estimate
\begin{eqnarray}\label{realinter}
\Vert T\Vert_{\dot{F}_{p}^{0,q}\to\dot{F}_{p}^{0,q}}\lesssim \Vert T\Vert_{\dot{F}_{p_0}^{0,q}\to\dot{F}_{p_0}^{0,q}}^{1-\theta}\Vert T\Vert_{\dot{F}_{p_1}^{0,q}\to\dot{F}_{p_1}^{0,q}}^{\theta}.
\end{eqnarray}

Now we apply a complex interpolation method for a Banach couple.
Let $s_0,s_1\in\mathbb{R}$, $1\leq p_0,q_0<\infty$ and $1\leq p_1,q_1\leq \infty$. Let $0<\theta<1$ and suppose
\begin{eqnarray*}
s=(1-\theta)s_0+\theta s_1, ~1/p=(1-\theta)/p_0+\theta/p_1, ~1/q=(1-\theta)/q_0+\theta/q_1.
\end{eqnarray*} Then
\begin{eqnarray*}
[\dot{F}_{p_0}^{s_0,q_0},\dot{F}_{p_1}^{s_1,q_1}]_{\theta}=\dot{F}_{p}^{s,q}
\end{eqnarray*}
Then the following interpolation holds.
\begin{eqnarray}\label{complexinter}
\Vert T\Vert_{\dot{F}_{p}^{0,q}\to\dot{F}_{p}^{0,q}}\lesssim \Vert T\Vert_{\dot{F}_{p_0}^{0,q_0}\to\dot{F}_{p_0}^{0,q_0}}^{1-\theta}\Vert T\Vert_{\dot{F}_{p_1}^{0,q_1}\to\dot{F}_{p_1}^{0,q_1}}^{\theta}.
\end{eqnarray}

\section{\textbf{Proof of Theorem \ref{multiplierbesov} $-$ \ref{multipliertheorem3}}}\label{mmultiplier}

We start the proof by introducing some key lemmas.
Nikolskii's inequalitiy \cite{Ni} says that for $0<p< q\leq \infty$, if  $f\in S'\cap L^p$ has Fourier transform which is compactly supported in a ball of radius $2^k$ then 
\begin{eqnarray}\label{classicalnikol}
\Vert f\Vert_{L^q}\lesssim 2^{kd(1/p-1/q)}\Vert f\Vert_{L^p}.
\end{eqnarray} 
We have an improvement of this estimate.
\begin{lemma}\label{nikol}
Let $0<p<q\leq \infty$ and $f\in S'\cap K_p^{0,q}$ with $Supp(\widehat{f})\subset \{\xi:|\xi|\lesssim 2^k\}$. Then
\begin{eqnarray*}
\Vert f\Vert_{L^q}\lesssim 2^{kd(1/p-1/q)}\Vert f\Vert_{K_{p}^{0,q}}.
\end{eqnarray*}
\end{lemma}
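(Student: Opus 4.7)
My plan is to combine the Peetre-type pointwise inequality for band-limited functions with a local $L^\infty$-$L^p$ interpolation on each Herz annulus, and close the estimate by a H\"older-plus-Schur summation. Since $\widehat f$ is supported in $\{|\xi|\lesssim 2^k\}$, the Peetre maximal estimate recalled in Section~\ref{prelim} yields, for any $M>d$,
\[
|f(x)|^p \lesssim 2^{kd}\int_{\mathbb{R}^d}\frac{|f(y)|^p}{(1+2^k|x-y|)^M}\,dy.
\]
Decomposing the integral into contributions from each shell $B_l$ and bounding the weight by its value at the closest point of $B_l$ gives
\[
|f(x)|^p \lesssim 2^{kd}\sum_{l\ge 0}\frac{\Vert f\Vert_{L^p(B_l)}^p}{\bigl(1+2^k\operatorname{dist}(x,B_l)\bigr)^M}.
\]
Writing $A_l:=\Vert f\Vert_{L^p(B_l)}^p$ and $w_{j,l}:=\sup_{x\in B_j}\bigl(1+2^k\operatorname{dist}(x,B_l)\bigr)^{-M}$, this delivers the local bound $\Vert f\Vert_{L^\infty(B_j)}^p\lesssim 2^{kd}\sum_l A_l w_{j,l}$.

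For $q<\infty$, I would then use the elementary interpolation
$\Vert f\Vert_{L^q(B_j)}^q\le\Vert f\Vert_{L^\infty(B_j)}^{q-p}\Vert f\Vert_{L^p(B_j)}^p$
and sum in $j$ to obtain, with $B_j:=\sum_l A_l w_{j,l}$,
\[
\Vert f\Vert_{L^q}^q \lesssim 2^{kd(q-p)/p}\sum_j B_j^{(q-p)/p}A_j.
\]
H\"older's inequality in $j$ with conjugate exponents $q/p$ and $q/(q-p)$ then reduces the target estimate to the $\ell^{q/p}$-boundedness of the averaging operator $\{A_l\}\mapsto\{B_j\}$, namely $\sum_j B_j^{q/p}\lesssim\sum_l A_l^{q/p}=\Vert f\Vert_{K_p^{0,q}}^q$. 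The endpoint $q=\infty$ is handled more directly: the single chain $\Vert f\Vert_{L^\infty(B_j)}^p\lesssim 2^{kd}\bigl(\sum_l w_{j,l}\bigr)\sup_l A_l$ already yields $\Vert f\Vert_{L^\infty}\lesssim 2^{kd/p}\Vert f\Vert_{K_p^{0,\infty}}$ once a row-sum bound on $w_{j,l}$ is available.

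The main technical obstacle is the Schur test $\sup_j\sum_l w_{j,l}+\sup_l\sum_j w_{j,l}\lesssim 1$ uniformly in $k$. For $M>d$ the contribution from pairs $(j,l)$ with $\max(j,l)\ge -k$ is controlled by comparison with the continuous integral $2^{kd}\int(1+2^k|z|)^{-M}\,dz\lesssim 1$; the remaining ``low-frequency'' pairs, for which $\max(j,l)<-k$ and the weight does not decay, have to be handled by a more delicate geometric accounting that either exploits the rigidity of the sequence $\{A_l\}$ attached to a band-limited $f$ (whose values on shells inside the natural scale $2^{-k}$ are forced to grow geometrically in $l$) or appeals to a vector-valued maximal estimate in the spirit of Lemmas~\ref{maximal1}--\ref{maximal2}. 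Once this kernel bound is in place, the claimed inequality follows from the preceding four steps by taking $q$-th roots.
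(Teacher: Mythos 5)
Your first three steps are sound (the pointwise Plancherel--P\'olya type bound, the shell decomposition, and the $L^\infty$--$L^p$ interpolation on each $B_j$), but the argument breaks at exactly the point you flag: the reduction to the sequence inequality $\sum_j B_j^{q/p}\lesssim\sum_l A_l^{q/p}$ for $B_j=\sum_l A_lw_{j,l}$ cannot be closed, and not merely because the Schur test is delicate --- the sequence inequality is simply \emph{false} with a constant uniform in $k$. Indeed $w_{j,l}\approx(1+2^{k+\max(j,l)})^{-M}$, so for $k\le -10$ and $A_l=\delta_{l,0}$ one has $B_j\gtrsim_M 1$ for every $j\le -k-1$, whence $\sum_j B_j^{q/p}\gtrsim |k|$ while $\sum_l A_l^{q/p}=1$. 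Thus once you have discarded the band-limitedness of $f$ and kept only the shell norms $A_l$, no kernel bound on $\{w_{j,l}\}$ can rescue the estimate; the ``rigidity of $\{A_l\}$'' or the appeal to Lemmas \ref{maximal1}--\ref{maximal2} that you mention is precisely the missing content, and it is not carried out. Since the lemma is stated with the explicit factor $2^{kd(1/p-1/q)}$ and an implicit constant independent of $k\in\mathbb{Z}$, the low-frequency regime $2^k\ll 1$ (frequency support much finer than the unit scale of the Herz shells) cannot be set aside. For $k\ge 0$ your Schur test does close and the argument is complete in that range, but that is not the full statement.

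For comparison, the paper never linearizes into a shell matrix: it writes $|f|^q=|f|^p|f|^{q-p}$ over dyadic cubes $Q$ of side $2^{-k}$, bounds $\sup_Q|f|$ by $\inf_Q\mathfrak{M}_{\sigma,2^k}f$ via (\ref{inftykey}), applies H\"older with exponent $q/p$ to get $\Vert f\Vert_{L^q}\lesssim 2^{kd/q(1-p/q)}\Vert f\Vert_{K_p^{0,q}}^{p/q}\Vert\mathfrak{M}_{\sigma,2^k}f\Vert_{L^q}^{1-p/q}$, and then invokes the dilation-invariant maximal bound (\ref{max}) with $\sigma>d/q$ before dividing by $\Vert f\Vert_{L^q}^{1-p/q}$. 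The uniformity in $k$ is carried entirely by the scale-invariant Peetre maximal inequality, i.e.\ by exactly the structural information about band-limited $f$ that your matrix reduction throws away. To salvage your route you would have to keep the convolution kernel $2^{kd}(1+2^k|\cdot|)^{-M}$ acting on $|f|^p$ (rather than its shell averages) and estimate it in the relevant mixed norm, which in effect re-derives the maximal-function proof.
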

\begin{remark}
This lemma clearly implies (\ref{classicalnikol}) because $l^p\hookrightarrow l^q$, $L^p=K_p^{0,p}$, and $L_q=K_q^{0,q}$.
\end{remark}
\begin{proof}[Proof of Lemma \ref{nikol}]
Suppose $0<\Vert f\Vert_{L^q}<\infty$.
Let $h_d$ be the integer satisfying $\log_2{\sqrt{d}}<h_d\leq \log_2{\sqrt{d}}+1$ and let $P_0:=[-2,2]^d$ and  $P_l:=[-2^{l+1},2^{l+1}]^d\setminus [-2^{l-h_d},2^{l-h_d}]^d$ for $l\geq 1$. Then $P_l$ contains $B_l$ for all $l\geq 0$.
Now
\begin{eqnarray*}
\Vert f\Vert_{L^q}&=&\Big( \sum_{l=0}^{\infty}{\int_{P_l}{\big| f(x)\chi_{B_l}(x)\big|^q}dx}\Big)^{1/q}\\
  &=&\Big(\sum_{l=0}^{\infty}{\sum_{Q\subset P_l,Q\in\mathcal{D}_k}{\int_{Q}{\big|f(x)\chi_{B_l}(x) \big|^p}dx \big(\sup_{y\in Q}{|f(y)|} \big)^{q(1-p/q)}}} \Big)^{1/q}.
\end{eqnarray*}
It should be observed that for any $\sigma>0$ and $Q\in\mathcal{D}_k$ 
\begin{eqnarray}\label{inftykey}
\sup_{y\in Q}{|f(y)|}\lesssim_{\sigma} \inf_{y\in Q}{\mathfrak{M}_{\sigma,2^k}f(y)}.
\end{eqnarray}
As a consequence, one obtains \begin{eqnarray*}
\big( \sup_{y\in Q}{|f(y)|}\big)^{q(1-p/q)}&\lesssim_{\sigma}& 2^{kd(1-p/q)}\Big(\int_Q{\big(\mathfrak{M}_{\sigma,2^k}f(y) \big)^q}dy \Big)^{1-p/q} 
\end{eqnarray*} and thus by H\"older's inequality with $q/p>1$
\begin{eqnarray*}
\Vert f\Vert_{L^q}&\lesssim_{\sigma}&2^{kd/q(1-p/q)}\Big(\sum_{l=0}^{\infty}{\Big( \int_{P_l}{\big( \mathfrak{M}_{\sigma,2^k}f(y)\big)^q}dy\Big)^{1-p/q}\int_{B_l}{|f(x)|^p}dx} \Big)^{1/q}\\
&\lesssim&2^{kd/q(1-p/q)}\Vert f\Vert_{K_p^{0,q}}^{p/q}\Big(\sum_{l=0}^{\infty}{\int_{P_l}{\big(\mathfrak{M}_{\sigma,2^k}f(y) \big)^q}dy} \Big)^{1/q(1-p/q)}\\
&\lesssim&2^{kd/q(1-p/q)}\Vert f\Vert_{K_p^{0,q}}^{p/q}\big\Vert \mathfrak{M}_{\sigma,2^k}f\big\Vert_{L^q}^{1-p/q}.
\end{eqnarray*}
Then we choose $\sigma>d/q$ and  apply $(\ref{max})$ since $Supp(\widehat{f})\subset \{\xi:|\xi|\lesssim 2^k\}$. By dividing both sides by $\Vert f\Vert_{L^q}^{1-p/q}$ one completes the proof.

\end{proof}

We recall that $\varphi$ is a cutoff function so that $0\leq \varphi\leq 1$, $Supp(\varphi)\subset \{\xi:1/4\leq |\xi|\leq 4\}$, and $\varphi=1$ on $\{\xi:1/2\leq |\xi|\leq 2\}$.
 For $k\in\mathbb{Z}$
 let 
 \begin{eqnarray}\label{mkdefinition}
 m_k:=m\varphi(\cdot/2^k).
 \end{eqnarray}
Then one has $$T_mf(x)=\sum_{k\in\mathbb{Z}}{{\Pi}_kT_{m}f(x)}=\sum_{k\in\mathbb{Z}}{m_k^{\vee}\ast (\Pi_kf)(x)}=\sum_{k=0}^{\infty}{T_{m_k} (\Pi_kf)(x)}$$
and it follows from Lemma \ref{nikol} that for any $0<u\leq 1$ \begin{eqnarray}\label{boundm}
\Vert m_k^{\vee}\Vert_{L^1}=\big\Vert \big(m(2^k\cdot)\varphi \big)^{\vee}\big\Vert_{L^1}\lesssim \mathcal{K}_{u}^{0,1}[m].
\end{eqnarray} 
Similarly, it is also clear that for $0<u\leq p<1$
\begin{eqnarray}\label{boundmp}
\Vert m_k^{\vee}\Vert_{L^p}&=&2^{-kd(1/p-1)}\big\Vert \big(m(2^k\cdot)\varphi \big)^{\vee} \big\Vert_{L^p}\lesssim 2^{-kd(1/p-1)}\mathcal{K}_u^{0,p}[m].
\end{eqnarray}
Then the following lemma holds as a corollary of (\ref{boundm}) and (\ref{boundmp}).
\begin{lemma}\label{less}
Let $0<p\leq\infty$, $k\in \mathbb{Z}$, and $m_k$ be as in $(\ref{mkdefinition})$.
\begin{enumerate}
\item 
For $0<u\leq 1\leq p\leq \infty$ 
\begin{eqnarray*}
\big\Vert T_{m_k}\big\Vert_{L^p\to L^p}\lesssim \mathcal{K}_{u}^{0,1}[m]
\end{eqnarray*} uniformly in $k$.

\item For $0<u\leq p< 1$ and $A>0$ if $f_k\in\mathcal{E}(A2^k)$ then 
\begin{eqnarray*}
\big\Vert T_{m_k}f_k\big\Vert_{L^p}\lesssim_A  \mathcal{K}_{u}^{0,p}[m]\big\Vert f_k\big\Vert_{L^p}
\end{eqnarray*} uniformly in $k$. 
\end{enumerate}

\end{lemma}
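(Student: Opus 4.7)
For assertion (1), the plan is to write $T_{m_k} f = m_k^\vee * f$ and apply the classical Young inequality $L^1 * L^p \to L^p$ (valid for all $1 \leq p \leq \infty$); combined with the bound $\Vert m_k^\vee\Vert_{L^1} \lesssim \mathcal{K}_u^{0,1}[m]$ from (\ref{boundm}), this immediately yields the claim uniformly in $k$.

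For assertion (2), Young's inequality is unavailable since $p < 1$, so I would instead invoke the standard quasi-convolution estimate for spectrally supported functions: if $h, \phi \in \mathcal{E}(R)$ and $0 < p \leq 1$, then
\begin{eqnarray*}
\Vert h * \phi\Vert_{L^p} \lesssim R^{d(1/p-1)} \Vert h\Vert_{L^p} \Vert \phi\Vert_{L^p}.
\end{eqnarray*}
This Plancherel-Polya-type bound is deducible by sampling $h * \phi$ on a lattice of mesh $\sim R^{-1}$ and applying the elementary discrete inequality $\Vert a * b\Vert_{\ell^p} \leq \Vert a\Vert_{\ell^p} \Vert b\Vert_{\ell^p}$, which in turn is a consequence of the $p$-subadditivity $(\sum |c_n|)^p \leq \sum |c_n|^p$ available for $p \leq 1$. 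Since $m_k^\vee$ is Fourier-supported in $\{|\xi| \leq 2^{k+2}\}$ and by hypothesis $f_k \in \mathcal{E}(A2^k)$, both functions lie in $\mathcal{E}(C_A 2^k)$ with $C_A := \max(2, A)$, so the inequality with $R = C_A 2^k$ yields
\begin{eqnarray*}
\Vert T_{m_k} f_k\Vert_{L^p} = \Vert m_k^\vee * f_k\Vert_{L^p} \lesssim_A 2^{kd(1/p - 1)} \Vert m_k^\vee\Vert_{L^p} \Vert f_k\Vert_{L^p}.
\end{eqnarray*}
Substituting $\Vert m_k^\vee\Vert_{L^p} \lesssim 2^{-kd(1/p - 1)} \mathcal{K}_u^{0,p}[m]$ from (\ref{boundmp}), the scaling factors $2^{\pm kd(1/p-1)}$ cancel exactly, and (2) follows.

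The only nontrivial ingredient is the quasi-Young estimate for band-limited functions used in (2); it is a well-known technical tool (cf.\ Triebel's monograph) but must be invoked rather than proved from Young alone, since a naive route through Nikolskii $L^p \hookrightarrow L^1$ applied twice would yield a worse factor $R^{3d(1/p-1)}$ and thereby destroy the uniformity in $k$. The key conceptual point, and what I expect to be the crux of any alternative argument, is that the $2^{kd(1/p-1)}$ growth furnished by the band-limited convolution precisely offsets the Nikolskii-type sharpening of the $L^p$ norm of $m_k^\vee$ recorded in (\ref{boundmp}).
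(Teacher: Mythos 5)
Your proposal is correct, and part (1) is exactly the paper's argument: Young's inequality $\Vert m_k^{\vee}\ast f\Vert_{L^p}\leq \Vert m_k^{\vee}\Vert_{L^1}\Vert f\Vert_{L^p}$ combined with (\ref{boundm}). For part (2) you also land on the same intermediate inequality as the paper, $\Vert T_{m_k}f_k\Vert_{L^p}\lesssim_A 2^{kd(1/p-1)}\Vert m_k^{\vee}\Vert_{L^p}\Vert f_k\Vert_{L^p}$, followed by (\ref{boundmp}) and the exact cancellation of the factors $2^{\pm kd(1/p-1)}$; the only divergence is how this band-limited quasi-Young inequality is justified. You invoke the Plancherel--Polya sampling argument (discrete $\ell^p$ Young on a lattice of mesh $\sim 2^{-k}$), which is a legitimate standard tool. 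The paper instead deduces it from Nikolskii's inequality (\ref{classicalnikol}) in a single application: for fixed $x$ the product $y\mapsto m_k^{\vee}(y)f_k(x-y)$ is itself band-limited in a ball of radius $\lesssim_A 2^k$, so $|m_k^{\vee}\ast f_k(x)|\leq\Vert m_k^{\vee}f_k(x-\cdot)\Vert_{L^1}\lesssim_A 2^{kd(1/p-1)}\Vert m_k^{\vee}f_k(x-\cdot)\Vert_{L^p}$, and raising to the $p$-th power and integrating in $x$ gives the sharp factor $2^{kd(1/p-1)}$ with no loss. So your side remark that a route through Nikolskii is necessarily lossy is not quite right: it is only the naive ``apply Nikolskii to each factor separately'' scheme that loses powers of $2^k$, whereas Nikolskii applied once to the product is precisely the paper's device (and reappears later, e.g.\ in (\ref{decompr})). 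Both justifications buy the same estimate; the sampling route avoids the product/Fubini trick, the Nikolskii route avoids Plancherel--Polya.
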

The proof of Lemma \ref{less} is quite simple. We apply Young's inequality for $1\leq p\leq \infty$ to obtain $\big\Vert T_{m_k}f\big\Vert_{L^p}\leq \Vert m_k^{\vee}\Vert_{L^1} \Vert f\Vert_{L^p}$ and then use (\ref{boundm}). If $0<p<1$ then Nikolskii's inequality (\ref{classicalnikol}) can be applied. Indeed, $\big\Vert T_{m_k}f_k\big\Vert_{L^p}\lesssim_A 2^{kd(1/p-1)}\Vert m_k^{\vee}\Vert_{L^p}\Vert f_k \Vert_{L^p}$ and finally (\ref{boundmp}) proves (2).\\

Note that Lemma \ref{less} gives the straightforward proof of the first statement in Theorem \ref{multiplierbesov} and by embedding $K_u^{d/r-d/u,r}\hookrightarrow K_r^{0,r}$ for $r<u$, the second one is immediate from the first one in the theorem.\\

From now on we assume $p\not= q$ and prove Theorem \ref{multipliertheorem0}$-$\ref{multipliertheorem3}.
We deal with the case $0<q<p\leq \infty$ and the case $0<p\leq 1$, $p<q\leq \infty$ because the results for $1<p<q\leq \infty$ follow from the duality argument (\ref{dual}).

When $0<p\leq 1$ and $p<q\leq \infty$ the proof is based on the method of $\varphi$-transform and $\infty$-atoms for $\dot{f}_p^{0,q}$.
By applying  (\ref{decomposition1}) and Lemma \ref{decomhardy}, $f\in \dot{F}_p^{0,q}$ can be decomposed as
$$f(x)=\sum_{Q\in\mathcal{D}}{b_Q\vartheta^Q(x)}=\sum_{j=1}^{\infty}{\lambda_j \sum_{Q\in\mathcal{D}}{ a_{j,Q}\vartheta^Q (x)  }}$$
 for some $\{b_Q\}_{Q\in\mathcal{D}}\in \dot{f}_p^{0,q}$, a sequence of scalars $\{\lambda_j\}$, and a sequence of $\infty$-atoms $\{a_{j,Q}\}$ for $\dot{f}_p^{0,q}$.
Then  by applying $l^p\hookrightarrow l^1$ and  Minkowski's inequality with $q/p>1$ as in \cite{Park},  one has
\begin{eqnarray}\label{sup1}
\big\Vert   T_mf \big\Vert_{\dot{F}_p^{0,q}} &\lesssim& \big( \sum_{j=1}^{\infty}{|\lambda_j|^p}\big)^{1/p}\sup_{l\geq 1}{\Big\Vert \Big( \sum_{k\in\mathbb{Z}}{\big| m_k^{\vee}\ast\big(\sum_{Q\in\mathcal{D}_k}{a_{l,Q}\vartheta^{Q}} \big)\big|^q}\Big)^{1/q} \Big\Vert_{L^p}}.
\end{eqnarray} 
Since $$\Big( \sum_{j=1}^{\infty}{|\lambda_j|^p} \Big)^{{1}/{p}}\lesssim \Vert f \Vert_{\dot{F}_p^{0,q}},$$ 
the proof can be reduced to obtaining the desired bound for the supremum of the right hand side in (\ref{sup1}).

When $q<p$ we shall employ  the sharp maximal function estimates  in \cite{Se2}.
 For a sequence of Schwartz functions $\{f_k\}_{k\in\mathbb{Z}}$ let $$\mathcal{N}_{q}^{\sharp}\big(\{f_k\}\big)(x):=\sup_{P:x\in P\in\mathcal{D}}{\Big(\dfrac{1}{|P|}\int_P{\sum_{k=-\log_2{l(P)}}^{\infty} |f_k(y)|^q}dy \Big)^{1/q}}.$$
 Then it is known in \cite{Se2} that
 for $0<q<p\leq \infty$
 \begin{eqnarray}\label{lemmasharp}
 \big\Vert \mathcal{N}_q^{\sharp}\big( \{\Pi_kf\}\big)\big\Vert_{L^p}\approx \Vert f \Vert_{\dot{F}_p^{0,q}}.
 \end{eqnarray}

\subsection{Proof of Theorem \ref{multipliertheorem0}}

Suppose $0<u\leq r=\min{(p,q)}<1$ and $\mathcal{K}_{u}^{s,u}[m]<\infty$.
Due to a proper embedding with $s>0$  one may assume $t=u$.

\subsubsection{The case $0<p<1$ and $p<q\leq \infty$ }
Suppose $0<u\leq p$ and $s>0$.
Let $Q_0$ be any dyadic cubes with side length $2^{-\mu}$, $\mu\in\mathbb{Z}$, and $a_Q$ be an $\infty$-atom for $\dot{f}_p^{0,q}$ with $Q_0$. For  each $k\in\mathbb{Z}$ let 
\begin{eqnarray}\label{rq}
 A_{Q_0,k}(x):=\sum_{{Q\in\mathcal{D}_k, Q\subset Q_0}}{a_Q\vartheta^Q(x)}.
 \end{eqnarray} 
The condition $Q\subset Q_0$ in the sum ensures that $A_{Q_0,k}$ vanishes unless $\mu\leq k$ and thus
 one obtains the desired result by showing
\begin{eqnarray}\label{gggoal}
\Big\Vert \Big( \sum_{k=\mu}^{\infty}{\big| T_{m_k}( A_{Q_0,k}) \big|^q}\Big)^{1/q}\Big\Vert_{L^p} \lesssim \mathcal{K}_{u}^{s,u}[m]\quad \text{uniformly in $Q_0$}.
\end{eqnarray} 

For $j\geq 1$ let \begin{eqnarray*}
\mathcal{G}_{Q_0,p,q}^{j}:=\Big(\int_{D_j}{\Big(\sum_{k=\mu}^{\infty}{\big| T_{m_k}(A_{Q_0,k})(x)\big|^q} \Big)^{p/q}}dx\Big)^{1/p}
\end{eqnarray*} where
 $$D_1:=\{x\in\mathbb{R}^d:|x-c_{Q_0}|\leq 2^{2-\mu}\sqrt{d}\},$$
$$D_j:=\{x\in\mathbb{R}^d:2^{j-\mu}\sqrt{d}<|x-c_{Q_0}|\leq 2^{j-\mu+1}\sqrt{d}\},\quad j\geq 2.$$ 
Then the left hand side of (\ref{gggoal}) is bounded by
$$\mathcal{G}_{Q_0,p,q}^{1}+\Big( \sum_{j=2}^{\infty}{\big( \mathcal{G}_{Q_0,p,q}^{j}\big)^p}\Big)
^{1/p}.$$

The estimation of $\mathcal{G}_{Q_0,p,q}^1$ is straightforward. By using H\"older's inequality with $q/p>1$, Lemma \ref{less}, (\ref{decomposition2}), and (\ref{infdef}) one obtains 
 \begin{eqnarray*}
 \mathcal{G}_{Q_0,p,q}^1&\lesssim&|Q_0|^{1/p-1/q}\Big(\sum_{k=\mu}^{\infty}{\big\Vert T_{m_k}( A_{Q_0,k})\big\Vert_{L^q}^q} \Big)^{1/q}\nonumber \\
 &\lesssim& \mathcal{K}_{u}^{0,\min{(1,q)}}[m]|Q_0|^{1/p-1/q} \Big(\sum_{k=\mu}^{\infty}{\Vert A_{Q_0,k}\Vert_{L^q}^q} \Big)^{1/q}\lesssim  \mathcal{K}_{u}^{0,u}[m].
 \end{eqnarray*} 
 The last inequality follows from the fact that
\begin{eqnarray*}
\sum_{k=\mu}^{\infty}{\Vert A_{Q_0,k}\Vert_{L^q}^q} &\lesssim&\sum_{k=\mu}^{\infty}\int_{Q_0}{\sum_{Q\in\mathcal{D}_k,Q\subset Q_0}{\big(|a_Q||Q|^{-1/2}\chi_Q(x) \big)^{q}}}dx\\
   &=&\int_{Q_0}\sum_{Q\in\mathcal{D}, Q\subset Q_0}{\big( |a_Q||Q|^{-1/2}\chi_Q(x)\big)^q}dx \lesssim|Q_0|^{1-q/p}.
\end{eqnarray*}

Now it remains to show
\begin{eqnarray}\label{edc}
\displaystyle\Big( \sum_{j=2}^{\infty}{\big( \mathcal{G}_{Q_0,p,q}^{j}\big)^p}\Big)
^{1/p}\lesssim \mathcal{K}_{u}^{s,u}[m].
\end{eqnarray} 
 By using Nikolskii's inequality (\ref{classicalnikol}) the left hand side of (\ref{edc}) is controlled by
\begin{eqnarray}\label{dcp1}
\Big(\sum_{j=2}^{\infty}{\int_{D_j}{\Big[ \sum_{k=\mu}^{\infty}{2^{kdq(1/u-1)}\big\Vert m_k^{\vee}A_{Q_0,k}(x-\cdot)\big\Vert_{L^u}^q }    \Big]^{p/q}}dx} \Big)^{1/p}.
\end{eqnarray}
Then one decompose (\ref{dcp1}) into three pieces by using
\begin{eqnarray}\label{dcp2}
\big\Vert m_k^{\vee}A_{Q_0,k}(x-\cdot)\big\Vert_{L^u}&\lesssim& \mathcal{L}_{u,j}^{low}[A_{Q_0,k}](x)+\mathcal{L}_{u,j}^{mid}[A_{Q_0,k}](x)+\mathcal{L}_{u,j}^{high}[A_{Q_0,k}](x)
\end{eqnarray}
where
\begin{eqnarray*}
\mathcal{L}_{u,j}^{low}[A_{Q_0,k}](x):=\Big(\sum_{l: 2^l\leq 2^{j+k-\mu-2}\sqrt{d}}{\big\Vert m_k^{\vee}\chi_{B_l}(2^k\cdot)A_{Q_0,k}(x-\cdot)\big\Vert_{L^u}^u}\Big)^{1/u},
\end{eqnarray*} 
\begin{eqnarray*}
\mathcal{L}_{u,j}^{mid}[A_{Q_0,k}](x):=\Big(\sum_{l: 2^{j+k-\mu-2}\sqrt{d} <2^l<2^{j+k-\mu+3}\sqrt{d}}{\big\Vert m_k^{\vee}\chi_{B_l}(2^k\cdot)A_{Q_0,k}(x-\cdot)\big\Vert_{L^u}^u}\Big)^{1/u},
\end{eqnarray*}
\begin{eqnarray*}
\mathcal{L}_{u,j}^{high}[A_{Q_0,k}](x):=\Big(\sum_{l:2^l\geq 2^{j+k-\mu+3}\sqrt{d}}{\big\Vert m_k^{\vee}\chi_{B_l}(2^k\cdot)A_{Q_0,k}(x-\cdot)\big\Vert_{L^u}^u}\Big)^{1/u}.
\end{eqnarray*}

It should be observed that from (\ref{infdef}) \begin{eqnarray}\label{observe}
|a_Q|\leq |Q|^{1/2}|Q_0|^{-1/p}
\end{eqnarray} and thus for
 arbitrary $M>0$
 \begin{eqnarray*}
|A_{Q_0,k}(x-y)|&\lesssim_M&\sum_{Q\in\mathcal{D}_k,Q\subset Q_0}{|a_Q||Q|^{-1/2}\frac{1}{\big(1+2^k|x-y-x_Q| \big)^M}}\\
                         &\leq&|Q_0|^{-1/p}\sum_{Q\in\mathcal{D}_k,Q\subset Q_0}{\frac{1}{(1+2^k|x-y-x_Q|)^M}}.
\end{eqnarray*}
If $2^l\leq 2^{j+k-\mu-2}\sqrt{d}$ or $2^l\geq 2^{j+k-\mu+3}\sqrt{d}$ then one has $|x-y-c_{Q_0}|\geq 2^{j-\mu-2}\sqrt{d}$ for
$x\in D_j$ and $2^ky\in B_{l}$.
Accordingly, in this case
 \begin{eqnarray*}
|A_{Q_0,k}(x-y)|       &\lesssim&\big( 2^{k+j}l(Q_0)\big)^{-M}|Q_0|^{1-1/p}|Q|^{-1}
\end{eqnarray*}
and this proves
\begin{eqnarray}\label{jkjk}
&&\mathcal{L}_{u,j}^{low}[A_{Q_0,k}](x),\mathcal{L}_{u,j}^{high}[A_{Q_0,k}](x)\nonumber\\
&\lesssim_M&\Big(\sum_{l=k}^{\infty}{\big\Vert m_k^{\vee}\chi_{B_l}(2^k\cdot) \big\Vert_{L^u}^u}\Big)^{1/u}\big( 2^{k+j-\mu}\big)^{-M}|Q_0|^{1-1/p}|Q|^{-1}\label{xlxl}\nonumber\\
     &\lesssim&2^{-kd(1/u-1)}\mathcal{K}_{u}^{0,u}[m]\big( 2^{k+j-\mu}\big)^{-M}|Q_0|^{1-1/p}|Q|^{-1}.
\end{eqnarray}
By choosing $M>d/p$, the terms corresponding to $\mathcal{L}^{low}_{u,j}[A_{Q_0,k}]$ and $\mathcal{L}_{u,j}^{high}[A_{Q_0,k}]$ are less than a constant times $\mathcal{K}_{u}^{0,u}[m]$.

To complete the estimation of (\ref{dcp1}) one has to deal with
\begin{eqnarray*}
&&\Big( \sum_{j=2}^{\infty}{\int_{D_j}{\Big[ \sum_{k=\mu}^{\infty}{2^{kdq(1/u-1)}\big(\mathcal{L}_{u,j}^{mid}[A_{Q_0,k}](x) \big)^q}\Big]^{p/q}}dx}\Big)^{1/p}.
\end{eqnarray*} 
By using $l^p\hookrightarrow l^q$ this is dominated by
\begin{eqnarray}\label{middle}
\Big(\sum_{k=\mu}^{\infty}{2^{kdp(1/u-1)}\sum_{j=2}^{\infty}{\big\Vert \mathcal{L}_{u,j}^{mid}[A_{Q_0,k}]\big\Vert_{L^p(D_j)}^p}} \Big)^{1/p}.
\end{eqnarray} 
By using Minkowski's inequality if $u<p$ one obtains
\begin{eqnarray*}
\big\Vert \mathcal{L}_{u,j}^{mid}[A_{Q_0,k}]\big\Vert_{L^p(D_j)}&\lesssim&\Big(\sum_{l: 2^l\approx  2^{j+k-\mu}}{\big\Vert m_k^{\vee}\chi_{B_l}(2^k\cdot)\big\Vert_{L^u}^{u}\Vert A_{Q_0,k}\Vert_{L^p}^u} \Big)^{1/u}
\end{eqnarray*}
and this can be bounded by  $2^{-kd(1/u-1)}2^{-s(j+k-\mu)}\mathcal{K}_{u}^{s,u}[m]$, using the fact $\Vert A_{Q_0,k}\Vert_{L^p}\lesssim 1$ and $m_k^{\vee}=2^{kd}\big( m(2^k\cdot)\varphi\big)^{\vee}(2^k\cdot)$.
This proves $(\ref{middle})\lesssim \mathcal{K}_{u}^{s,u}[m]$.

\subsubsection{ The case $0<q<1$ and $q<p< \infty$ }
Suppose $0<u\leq q$ and $s>0$.
We deal with just the case $d/s<p<\infty$ because other cases follow by the real interpolation method (\ref{realinter}) with the case $p=q$.
Suppose $d/s<p<\infty$.
By (\ref{lemmasharp}) one has
\begin{eqnarray}\label{q<p}
\big\Vert T_mf \big\Vert_{\dot{F}_p^{0,q}}\approx \big\Vert  \mathcal{N}_q^{\sharp}(\{T_{m_k}({\Pi_k}f)\})\big\Vert_{L^p}.
\end{eqnarray}
Fix $x\in\mathbb{R}^d$ and a dyadic cube $P$ containing $x$. By Nikolskii's inequality (\ref{classicalnikol}) one obtains
\begin{eqnarray}\label{decompr}
&&\Big( \dfrac{1}{|P|}\int_P{\sum_{k=-\log_2{l(P)}}^{\infty}{\big| T_{m_k}\Pi_kf(y)   \big|^q}}dy\Big)^{1/q}\nonumber\\
&\leq&  \Big(\dfrac{1}{|P|}\int_P{\sum_{k=-\log_2{l(P)}}^{\infty}{2^{kdq(1/u-1)}\big\Vert m_k^{\vee}\Pi_kf(y-\cdot)\big\Vert_{L^u}^{q}}}dy \Big)^{1/q}  
\end{eqnarray}  and then we break (\ref{decompr}) into $\mathcal{R}_{P,u}^{in}$ and $\mathcal{R}_{P,u}^{out}$
where
\begin{eqnarray*}
\mathcal{R}_{P,u}^{in}: =\Big(\dfrac{1}{|P|}\int_P{\sum_{k=-\log_2{l(P)}}^{\infty}{2^{kdq(1/u-1)}\big\Vert m_k^{\vee}\Pi_kf(y-\cdot)\big\Vert_{L^u(B(0,2l(P)))}^{q}}}dy \Big)^{1/q},  
\end{eqnarray*}
\begin{eqnarray*}
\mathcal{R}_{P,u}^{out}:=\Big(\dfrac{1}{|P|}\int_P{\sum_{k=-\log_2{l(P)}}^{\infty}{2^{kdq(1/u-1)}\big\Vert m_k^{\vee}\Pi_kf(y-\cdot)\big\Vert_{L^u(B(0,2l(P))^c)}^{q}}}dy \Big)^{1/q}  .
\end{eqnarray*}
Here $B(0,2l(P))$ denotes the ball of radius $2l(P)$, centered at the origin.\\

By using Minkowski's inequality if $u<q$ and switching two integrals if $u=q$ one obtains
\begin{eqnarray*}
\int_P{\big\Vert m_k^{\vee}\Pi_kf(y-\cdot)\big\Vert_{L^u(B(0,2l(P)))}^q}dy\lesssim \Vert m_k^{\vee}\Vert_{L^u}^q \int_{\widetilde{P}}{|\Pi_kf(y)|^q}dy
\end{eqnarray*} where $\widetilde{P}$ is a dilation of $P$.
Then this proves that 
\begin{eqnarray}\label{dsdsds}
\mathcal{R}_{P,u}^{in}\lesssim \mathcal{K}_u^{0,u}[m]\Big(\dfrac{1}{|P|}\int_{\widetilde{P}}{\sum_{k=-\log_2{l(P)}}^{\infty}{\big|\Pi_kf(y) \big|^q}}dy \Big)^{1/q}
\end{eqnarray} and thus
\begin{eqnarray}\label{dgdgdg}
\big\Vert \sup_{x\in P\in\mathcal{D}}\mathcal{R}_{P,u}^{in}\big\Vert_{L^p(x)}\lesssim \mathcal{K}_u^{0,u}[m]\Big\Vert \mathcal{M}\Big(\sum_{k\in\mathbb{Z}}{|\Pi_kf|^q} \Big)\Big\Vert_{L^{p/q}}^q\lesssim \mathcal{K}_u^{0,u}[m]\Vert f\Vert_{\dot{F}_p^{0,q}}
\end{eqnarray} by the $L^{p/q}$ boundedness of $\mathcal{M}$.\\

To deal with the term corresponding to $\mathcal{R}_{P,u}^{out}$ we choose $\sigma>0$ so that
$d/p<\sigma<s$. Our claim is that for $y\in P$
\begin{eqnarray}\label{excbound}
&&\big\Vert m_k^{\vee}\Pi_kf(y-\cdot) \big\Vert_{L^u(B(0,2l(P))^c)}\nonumber\\
&\lesssim& 2^{-kd(1/u-1)}\big(2^kl(P) \big)^{-(s-\sigma)}\mathcal{K}_u^{s,u}[m]\mathfrak{M}_{\sigma,2^k}\Pi_kf(y).
\end{eqnarray} 
Then one has
\begin{eqnarray}\label{pzpzpz}
\mathcal{R}_{P,u}^{out}&\lesssim& \mathcal{K}_u^{s,u}[m]{\Big(\dfrac{1}{|P|}\int_P{\sum_{k=-\log_2{l(P)}}^{\infty}{(2^kl(P))^{-q(s-\sigma)}\big(\mathfrak{M}_{\sigma,2^k}\Pi_kf(y) \big)^q}}dy \Big)^{1/q}}\nonumber\\
&\lesssim&\mathcal{K}_{u}^{s,u}[m]\Big( \dfrac{1}{|P|}\int_P{\Big( \sup_{k\in\mathbb{Z}}{\mathfrak{M}_{\sigma,2^k}\Pi_kf(y)}\Big)^q}dy\Big)^{1/q}.
\end{eqnarray}
Finally, from the $L^{p}$ boundedness of $\mathcal{M}_q$ and (\ref{max}) with $q=\infty$
it follows 
\begin{eqnarray}\label{klklkl}
\big\Vert \sup_{P:x\in P\in\mathcal{D}}{\mathcal{R}_{P,u}^{out} }\big\Vert_{L^p(x)}&\lesssim& \mathcal{K}_{u}^{s,u}[m]\big\Vert \mathcal{M}_q\big( \sup_{k\in\mathbb{Z}}{\mathfrak{M}_{\sigma,2^k}\Pi_kf}\big) \big\Vert_{L^p}\nonumber\\
&\lesssim& \mathcal{K}_u^{s,u}[m]\big\Vert \sup_{k\in\mathbb{Z}}{\mathfrak{M}_{\sigma,2^k}\Pi_kf}\big\Vert_{L^p}\lesssim\mathcal{K}_u^{s,u}[m]\Vert f\Vert_{\dot{F}_p^{0,\infty}}
\end{eqnarray} 
 and then the embedding $\dot{F}_p^{0,q}\hookrightarrow \dot{F}_p^{0,\infty}$ finishes the proof.
 
To verify (\ref{excbound}) we first observe that
\begin{eqnarray}\label{tftf}
&&\big\Vert m_k^{\vee}\Pi_kf(y-\cdot) \big\Vert_{L^u(B(0,2l(P))^c)}\nonumber\\
&\leq&\Big( \sum_{l=k+\log_2{l(P)}}^{\infty}{\int_{B(0,2l(P))^c}{\big|m_k^{\vee}(z)\chi_{B_l}(2^kz)\big|^u\big|\Pi_kf(y-z)\big|^u}dz}\Big)^{1/u}.
\end{eqnarray}
In fact, the range of $l$ in the sum is $l\geq0$, but  due to the support of $\chi_{B_l}(2^k\cdot)$ and $\chi_{B(0,2l(P))^c}$, if $y\in P$ then the summand vanishes unless $l\geq k+\log_2{l(P)}$.
Therefore, the range $ l\geq 0$ can be replaced by $l\geq k+\log_2{l(P)}$ in the sum.
Moreover, one has
\begin{eqnarray*}
(\ref{tftf})&\leq& \Big(\sum_{l=k+\log_2{l(P)}}^{\infty}{\big| m_k^{\vee}\chi_{B_l}(2^k\cdot)\big|^u\ast\big|\Pi_kf \big|^u(y)} \Big)^{1/u}\\
&\lesssim&\mathfrak{M}_{\sigma,2^k}\Pi_kf(y)\Big(\sum_{l=k+\log_2{l(P)}}^{\infty}{2^{l\sigma u}\big\Vert m_k^{\vee}\chi_{B_l}(2^k\cdot)\big\Vert_{L^u}^u} \Big)^{1/u}.
\end{eqnarray*}
Then (\ref{excbound}) follows by an elementary computation with $s>\sigma$.

\subsubsection{The case $p=\infty$ and $0<q<1$ }

Assume $0<u\leq q<1$, $s>0$, and $\mathcal{K}_u^{s,u}[m]<\infty$.
Let $\mathcal{R}_{P,u}^{in}$ and $\mathcal{R}_{P,u}^{out}$ be defined as before. Then one has
\begin{eqnarray*}
\big\Vert T_mf\big\Vert_{\dot{F}_{\infty}^{0,q}}=\sup_{P\in\mathcal{D}}{\mathcal{R}_{P,u}^{in}}+\sup_{P\in\mathcal{D}}{\mathcal{R}_{P,u}^{out}}.
\end{eqnarray*} 
Moreover the arguments in (\ref{dsdsds}) and (\ref{pzpzpz}) can be extended to $p=\infty$ resulting in
\begin{eqnarray}\label{xbxbxb}
\sup_{P\in\mathcal{D}}{\mathcal{R}_{P,u}^{in}}\lesssim \mathcal{K}_u^{0,u}[m]\Vert f\Vert_{\dot{F}_{\infty}^{0,q}}
\end{eqnarray}
\begin{eqnarray}\label{xvxvxv}
\sup_{P\in\mathcal{D}}{\mathcal{R}_{P,u}^{out}}\lesssim  \mathcal{K}_u^{s,u}[m]\big\Vert \big\{\mathfrak{M}_{\sigma,2^k}\Pi_kf \big\}\big\Vert_{L^{\infty}(l^{\infty})}\lesssim \mathcal{K}_u^{s,u}[m]\Vert f\Vert_{\dot{F}_{\infty}^{0,\infty}}.
\end{eqnarray}
The proof ends by using $\dot{F}_{\infty}^{0,q}\hookrightarrow \dot{F}_{\infty}^{0,\infty}$.

\begin{remark}
Note that (\ref{dgdgdg}) and (\ref{klklkl}) also hold for $0<q<p<\infty$ and $0<u\leq \min{(1,q)}$.
Similarly (\ref{xbxbxb}) and (\ref{xvxvxv}) remain still valid for $0<q<p=\infty$ and $0<u\leq \min{(1,q)}$.

\end{remark}

\subsection{Proof of Theorem \ref{multipliertheorem1} (1)}
Assume $r=\min{(p,q)}<1$, $r<u\leq \infty$, $s=d/r-d/u$, and $t=r$.
Suppose $\mathcal{K}_u^{d/r-d/u,r}[m]<\infty$.
Furthermore, due to proper embeddings in $K$-spaces one may assume $u< 1$.

\subsubsection{The case $0<p<1$ and $p<q\leq \infty$ }
Suppose $0<p<u< 1$, $t=p$, and $s=d/p-d/u$.
As in the proof of Theorem \ref{multipliertheorem0}  let $Q_0$ be any dyadic cubes with side length $2^{-\mu}$ and  let $\{a_Q\}_{Q\in\mathcal{D}}$ be  $\infty$-atoms for $\dot{f}_p^{0,q}$ associated with $Q_0$. Let  $A_{Q_0,k}$ be defined as (\ref{rq}).
Then it suffices to show
\begin{eqnarray*}
\Big\Vert \Big( \sum_{k=\mu}^{\infty}{\big| T_{m_k}( A_{Q_0,k}) \big|^q}\Big)^{1/q}\Big\Vert_{L^p} \lesssim \mathcal{K}_{u}^{d/p-d/u,p}[m]\quad \text{uniformly in $Q_0$}.
\end{eqnarray*} 
As before we decompose the left hand side of the inequality into
\begin{eqnarray*}
\mathcal{G}_{Q_0,p,q}^{1}+\Big(\sum_{j=2}^{\infty}{(\mathcal{G}_{Q_0,p,q}^{j})^p} \Big)^{1/p}
\end{eqnarray*}
and the same argument in (\ref{dgdgdg}) yields that $\mathcal{G}_{Q_0,p,q}^{1}\lesssim \mathcal{K}_{p}^{0,p}[m]\lesssim \mathcal{K}_{u}^{d/p-d/u,p}[m]$.
By using (\ref{dcp1}) and (\ref{dcp2}) with $0<u\leq 1$ one has
\begin{eqnarray}\label{lalala}
\Big(\sum_{j=2}^{\infty}{(\mathcal{G}_{Q_0,p,q}^{j})^p} \Big)^{1/p}&\lesssim&\Big[\sum_{j=2}^{\infty}{\int_{D_j}{\Big(\sum_{k=\mu}^{\infty}{2^{kdq(1/u-1)}\big( \mathcal{L}_{u,j}^{low}[A_{Q_0,k}](x)\big)^q} \Big)^{p/q}}dx} \Big]^{1/p}\nonumber\\
&&+\Big[\sum_{j=2}^{\infty}{\int_{D_j}{\Big(\sum_{k=\mu}^{\infty}{2^{kdq(1/u-1)}\big( \mathcal{L}_{u,j}^{mid}[A_{Q_0,k}](x)\big)^q} \Big)^{p/q}}dx} \Big]^{1/p}\\
&&+\Big[\sum_{j=2}^{\infty}{\int_{D_j}{\Big(\sum_{k=\mu}^{\infty}{2^{kdq(1/u-1)}\big( \mathcal{L}_{u,j}^{high}[A_{Q_0,k}](x)\big)^q} \Big)^{p/q}}dx} \Big]^{1/p}\nonumber
\end{eqnarray}
and (\ref{jkjk}) shows that the first one and the third one are controlled by $\mathcal{K}_u^{0,u}[m]\lesssim \mathcal{K}_{u}^{d/p-d/u,p}[m]$.

Now by using $l^p\hookrightarrow l^q$
(\ref{lalala}) is less than
\begin{eqnarray*}
\Big[\sum_{k=\mu}^{\infty}2^{kdp(1/u-1)}\sum_{j=2}^{\infty}\int_{D_j}{\Big(\sum_{l:2^l\approx 2^{j+k-\mu}}{\int{\big|m_k^{\vee}(y)\chi_{B_l}(2^ky) \big|^u\big|A_{Q_0,k}(x-y)\big|^u}dy} \Big)^{p/u}}dx \Big]^{1/p}
\end{eqnarray*}
and by H\"older's inequality with $u/p>1$ and embedding $l^p\hookrightarrow l^u$ the integral over $D_j$ is dominated by
\begin{eqnarray*}
&&|D_j|^{1-p/u}\Big(\sum_{l:2^l\approx  2^{j+k-\mu} }{\big\Vert m_k^{\vee}\chi_{B_l}(2^k\cdot)\big\Vert_{L^u}^u\big\Vert A_{Q_0,k}\big\Vert_{L^u}^u} \Big)^{p/u}\\
&\lesssim&2^{(j-\mu)d(1-p/u)}\big\Vert A_{Q_0,k}\big\Vert_{L^u}^p2^{-kdp(1/u-1)}\sum_{l:2^l\approx  2^{j+k-\mu} }{\big\Vert (m(2^k\cdot)\varphi)^{\vee}\big\Vert_{L^u(B_l)}^p} \\
&\lesssim&2^{-kdp(1/u-1)}2^{-(k-\mu)dp(1/p-1/u)}\sum_{l:2^l\approx 2^{j+k-\mu}}{\big\Vert \big(m(2^k\cdot)\varphi \big)^{\vee}\big\Vert_{L^u(B_l)}^p}
\end{eqnarray*}
where one has to use $\Vert A_{Q_0,k}\Vert_{L^u}\lesssim 2^{\mu d(1/p-1/u)}$ (which is  due to embedding $l^u\hookrightarrow l^1$ and (\ref{observe})).
Finally, one obtains
\begin{eqnarray*}
(\ref{lalala})\lesssim \mathcal{K}_u^{d/p-d/u,p}[m]\Big(\sum_{k=\mu}^{\infty}{2^{-(k-\mu)d(1/p-1/u)}} \Big)^{1/p}\lesssim \mathcal{K}_u^{d/p-d/u,p}[m].
\end{eqnarray*}

\subsubsection{The case $0<q<1$ and $q<p< \infty$ }
Assume $0<q<u<1$, $t=q$, and $s=d/q-d/u$.
As in the proof of Theorem \ref{multipliertheorem0} it is enough to prove  the case $\frac{1}{1/q-1/u}<p<\infty$ because of real interpolation method (\ref{realinter}) with the case $p=q$.
 Suppose $1/p<{1/q-1/u}$.
Due to (\ref{q<p}) one needs to prove
\begin{eqnarray*}
\big\Vert  \mathcal{N}_q^{\sharp}(\{T_{m_k}({\Pi_k}f)\})\big\Vert_{L^p}\lesssim \mathcal{K}_{u}^{d/q-d/u,q}[m]\Vert f\Vert_{\dot{F}_p^{0,q}}.
\end{eqnarray*}
For $P\in\mathcal{D}$ let ${P}^*$ be the union of $P$ and all dyadic cubes of side length $l(P)$ whose boundaries have non-empty intersection with the boundary of $P$ and let $P^{**}:=(P^*)^*$.
Then one has
\begin{eqnarray*}
\Big(\dfrac{1}{|P|}\int_P{\sum_{k=-\log_2{l(P)}}^{\infty}{\big|T_{m_k}\Pi_kf(y) \big|^q}}dy \Big)^{1/q}\lesssim \mathfrak{R}_P^{in}+\mathfrak{R}_P^{out}
\end{eqnarray*}
where $$\mathfrak{R}_P^{in}:=\Big(\dfrac{1}{|P|}\int_P{\sum_{k=-\log_2{l(P)}}^{\infty}{\big|T_{m_k}\big(\chi_{(P^{**})}\Pi_kf\big)(y) \big|^q}}dy \Big)^{1/q}$$
$$\mathfrak{R}_P^{out}:=\Big(\dfrac{1}{|P|}\int_P{\sum_{k=-\log_2{l(P)}}^{\infty}{\big|T_{m_k}\big(\chi_{(P^{**})^c}\Pi_kf\big)(y) \big|^q}}dy \Big)^{1/q}.$$

From Lemma \ref{less} it follows that
\begin{eqnarray*}
\mathfrak{R}_{P}^{in}              &\lesssim&\mathcal{K}_{q}^{0,q}[m]\Big(\dfrac{1}{|P|}\sum_{k=-\log_2{l(P)}}^{\infty}{\big\Vert \Phi_k \ast \big( \chi_{({P}^{**})}{\Pi_k}f\big)\big\Vert_{L^q}^q}\Big)^{1/q}
\end{eqnarray*}  for some $\Phi_k\in S$ satisfyig $\widehat{\Phi_k}(\xi)=1$ on $2^{k-2}\leq |\xi|\leq 2^{k+2}$.
It should be observed that
\begin{eqnarray*}
\big\Vert \Phi_k\ast \big( \chi_{({P}^{**})}{\Pi_k}f\big)\big\Vert_{L^q}^q&\leq&\sum_{Q\in\mathcal{D}_k,Q\subset {P}^{**}}{\big\Vert \Phi_k\ast\big(\chi_Q{\Pi_k}f\big)\big\Vert_{L^q}^q}
\end{eqnarray*}
and for sufficiently large $M>d(1-q)/q$, by H\"older's inequality with $1/q>1$
\begin{eqnarray*}
\big\Vert \Phi_k\ast \big(\chi_Q{\Pi_k}f\big)\big\Vert_{L^q}^q&\leq&\Vert {\Pi_k}f\Vert_{L^{\infty}(Q)}^q\int_{\mathbb{R}^d}{\Big( \int_{Q}{\big| \Phi_k(x-y)\big|}dy\Big)^q}dx\\
&\lesssim_M&\Vert {\Pi_k}f\Vert_{L^{\infty}(Q)}^q 2^{-kd(1-q)}\Big( \int_{\mathbb{R}^d}{\int_{Q}{\big( 1+2^k|x-c_Q|\big)^M|\Phi_k(x-y)|}dy}dx\Big)^q\\
&\lesssim&2^{-kd}\Vert {\Pi_k}f\Vert_{L^{\infty}(Q)}^q
\end{eqnarray*} where $c_Q$ denotes the center of $Q$. In the last inequality one should use the fact that $1+2^k|x-c_Q|\lesssim 1+2^k|x-y|$ for $y\in Q$.
Therefore
\begin{eqnarray*}
\big\Vert\Phi_k\ast \big( \chi_{({P}^{**})}{\Pi_k}f\big)\big\Vert_{L^q}^q\lesssim\sum_{Q\in\mathcal{D}_k,Q\subset {P}^{**}}{2^{-kd}\Vert {\Pi_k}f\Vert_{L^{\infty}(Q)}^q}
\end{eqnarray*}
and for any $\sigma>0$ this is less than a constant depending on $\sigma$ times
\begin{eqnarray*}
\int_{{P}^{**}}{\big( \mathfrak{M}_{\sigma,2^k}({\Pi_k}f)(x)\big)^q}dx,
\end{eqnarray*} 
by using  (\ref{inftykey}).
This yields
\begin{eqnarray}\label{applyinfty1}
\mathfrak{R}_P^{in}&\lesssim&\mathcal{K}_{q}^{0,q}[m]\Big( \dfrac{1}{|P|}\int_{{P}^{**}}{\sum_{k=-\log_2{l(P)}}^{\infty}{\big(\mathfrak{M}_{\sigma,2^k}{({\Pi_k}f)}(w) \big)^q}}dw\Big)^{1/q}.
\end{eqnarray}
Now for $x\in\mathbb{R}^d$ we take the supremum over $P$ containing $x$ to obtain
$$\sup_{P:x\in P\in\mathcal{D}}\mathfrak{R}_{P}^{in}\lesssim \mathcal{K}_{q}^{0,q}[m] \Big[\mathcal{M}\Big(\sum_{k\in\mathbb{Z}}{\big( \mathfrak{M}_{\sigma,2^k}({\Pi_k}f)\big)^q} \Big)(x)\Big]^{1/q},$$
and by choosing $\sigma>d/q$ and using the $L^{p/q}$ boundedness of $\mathcal{M}$ and (\ref{max})   one proves
\begin{eqnarray*}
\big\Vert \sup_{P:x\in P\in\mathcal{D}}{\mathfrak{R}_{P}^{in} }\big\Vert_{L^p(x)}\lesssim \mathcal{K}_{q}^{0,q}[m]\big\Vert f \big\Vert_{\dot{F}_p^{0,q}}\lesssim \mathcal{K}_{u}^{d/q-d/u,q}[m]\big\Vert f \big\Vert_{\dot{F}_p^{0,q}}.
\end{eqnarray*}

Now it remains to show 
\begin{eqnarray}\label{remaingoal}
\big\Vert \sup_{P:x\in P\in\mathcal{D}}{\mathfrak{R}_{P}^{out} }\big\Vert_{L^p(x)}\lesssim \mathcal{K}_u^{d/q-d/u,q}[m]\Vert f\Vert_{\dot{F}_p^{0,q}}
\end{eqnarray} and we write 
\begin{eqnarray*}
\mathfrak{R}_{P}^{out}=\Big(\dfrac{1}{|P|}\int_P{\sum_{k=-\log_2{l(P)}}^{\infty}{\big|T_{m_k}\big(\Phi_k\ast \big( \chi_{(P^{**})^c}\Pi_kf\big) \big)(y) \big|^q}}dy \Big)^{1/q}
\end{eqnarray*} where $\Phi_k\in S$ whose Fourier transform is $1$ on $2^{k-2}\leq |\xi|\leq 2^{k+2}$.
By using Nikolskii's inequality with $0<u<1$ one has
\begin{eqnarray*}
\big|T_{m_k}\big(\Phi_k\ast \big( \chi_{(P^{**})^c}\Pi_kf\big) \big)(y) \big|&\lesssim& 2^{kd(1/u-1)}\big\Vert m_k^{\vee}(y-\cdot)\Phi_k\ast\big( \chi_{(P^{**})^c}\Pi_kf\big)\big\Vert_{L^u}.
\end{eqnarray*}
Let $$[\mathfrak{R}_P^{out}]_1:=\Big(\dfrac{1}{|P|}\int_P{\sum_{k=-\log_2{l(P)}}^{\infty}{2^{kdq(1/u-1)}\big\Vert m_k^{\vee}(y-\cdot)\Phi_k\ast\big( \chi_{(P^{**})^c}\Pi_kf\big)\big\Vert_{L^u(P^*)}^q   }}dy \Big)^{1/q}$$
$$[\mathfrak{R}_P^{out}]_2:=\Big(\dfrac{1}{|P|}\int_P{\sum_{k=-\log_2{l(P)}}^{\infty}{2^{kdq(1/u-1)}\big\Vert m_k^{\vee}(y-\cdot)\Phi_k\ast\big( \chi_{(P^{**})^c}\Pi_kf\big)\big\Vert_{L^u((P^*)^c)}^q   }}dy \Big)^{1/q}.$$

To estimate $[\mathfrak{R}_P^{out}]_1$ we claim that for $\sigma>0$
\begin{eqnarray*}
\big\Vert m_k^{\vee}(y-\cdot)\Phi_k\ast\big( \chi_{(P^{**})^c}\Pi_kf\big)\big\Vert_{L^u(P^*)}&\lesssim_{\sigma}& \mathcal{K}_{u}^{0,u}[m]2^{-kd(1/u-1)}\mathfrak{M}_{\sigma,2^k}\Pi_kf(y).
\end{eqnarray*}
Indeed, the left hand side is less than
\begin{eqnarray*}
&&\Big(\int_{P^*}{|m_k^{\vee}(y-z)|^u\Big(\int_{(P^{**})^c}{|\Phi_k(z-v)||\Pi_kf(v)|}dv \Big)^u}dz \Big)^{1/u}\\
&\leq&\mathfrak{M}_{\sigma,2^k}\Pi_kf(y)\Big(\int_{P^*}{|m_k^{\vee}(y-z)|^u\Big(\int_{(P^{**})^c}{\big(1+2^k|y-v| \big)^{\sigma}|\Phi_k(z-v)|}dv \Big)^{u}}dz \Big)^{1/u}\\
&\lesssim_{\sigma}&\mathcal{K}_u^{0,u}[m] 2^{-kd(1/u-1)}\mathfrak{M}_{\sigma,2^k}\Pi_kf(y).
\end{eqnarray*} 
Notice that in the last inequality we have used the fact that
$1+2^k|y-v|\lesssim 1+2^k|z-v|$ for $y\in P$, $z\in P^*$, $v\in (P^{**})^c$.

Then this yields that
\begin{eqnarray}\label{applyinfty2}
[\mathfrak{R}_P^{out}]_1\lesssim_{\sigma} \mathcal{K}_{u}^{0,u}[m]\Big(\dfrac{1}{|P|}\int_P{\sum_{k=-\log_2{l(P)}}^{\infty}{\big(\mathfrak{M}_{\sigma,2^k}\Pi_kf(y)\big)^q}}dy \Big)^{1/q}.
\end{eqnarray}
Choosing $\sigma>d/q$ and using $L^{p/q}$ boundedness of $\mathcal{M}$ and (\ref{max}) 
one obtains
\begin{eqnarray*}
\big\Vert \sup_{P:x\in P\in\mathcal{D}}[\mathfrak{R}_P^{out}]_1 \big\Vert_{L^p(x)}\lesssim \mathcal{K}_{u}^{0,u}[m]\Vert f\Vert_{\dot{F}_p^{0,q}}.
\end{eqnarray*}

Now we prove
\begin{eqnarray}\label{remaingoal2}
\big\Vert \sup_{P:x\in P\in\mathcal{D}}[\mathfrak{R}_P^{out}]_2 \big\Vert_{L^p(x)}\lesssim \mathcal{K}_{u}^{d/q-d/u,u}[m]\Vert f\Vert_{\dot{F}_p^{0,\infty}}.
\end{eqnarray}
It is seen that
\begin{eqnarray*}
&&\big\Vert m_k^{\vee}(y-\cdot)\Phi_k\ast\big( \chi_{(P^{**})^c}\Pi_kf\big)\big\Vert_{L^u((P^*)^c)}\\
&\lesssim& \mathfrak{M}_{\sigma,2^k}\big(\Phi_k\ast\big( \chi_{(P^{**})^c}\Pi_kf\big) \big)(y)\Big(\int_{(P^*)^c}{\big|m_k^{\vee}(y-z)\big|^u\big(1+2^k|y-z| \big)^{\sigma u}}dz \Big)^{1/u}.
\end{eqnarray*}
Choosing $d/p<\sigma<d/q-d/u$ and applying the same arguments in (\ref{tftf}) for $y\in P$ the last expression is less than
\begin{eqnarray*}
\mathcal{K}_u^{d/q-d/u,u}[m]\big(2^{k}l(P)\big)^{-(d/q-d/u-\sigma)}\mathfrak{M}_{\sigma,2^k}\big(\Phi_k\ast\big( \chi_{(P^{**})^c}\Pi_kf\big) \big)(y).
\end{eqnarray*}
Observe that
\begin{eqnarray*}
\mathfrak{M}_{\sigma,2^k}\big(\Phi_k\ast\big( \chi_{(P^{**})^c}\Pi_kf\big) \big)\lesssim \mathcal{M}_{d/\sigma}\big( \Phi_k\ast( \chi_{(P^{**})^c}\Pi_kf)\big)\lesssim\mathcal{M}_{d/\sigma}\mathfrak{M}_{\delta,2^k}\Pi_kf.
\end{eqnarray*}

Combining these, $[\mathfrak{R}_P^{out}]_2$ is dominated by
\begin{eqnarray*}
&& \mathcal{K}_u^{d/q-d/u,u}[m]\Big(\dfrac{1}{|P|}\int_P\sum_{k=-\log_2{l(P)}}^{\infty}{ \big(2^{k}l(P)\big)^{-q(d/q-d/u-\sigma)}  \big( \mathcal{M}_{d/\sigma}\mathfrak{M}_{\delta,2^k}\Pi_kf(y)\big)^q } dy\Big)^{1/q}\\
&\lesssim& \mathcal{K}_u^{d/q-d/u,u}[m]\Big(\dfrac{1}{|P|}\int_P\sup_{k\in\mathbb{Z}}{  \big( \mathcal{M}_{d/\sigma}\mathfrak{M}_{\delta,2^k}\Pi_kf (y)\big)^q } dy\Big)^{1/q}
\end{eqnarray*}
and consequently
\begin{eqnarray*}
\big\Vert \sup_{P:x\in P\in\mathcal{D}}{[\mathfrak{R}_P^{out}]_2}\big\Vert_{L^p(x)}&\lesssim& \mathcal{K}_u^{d/q-d/u,u}[m]\big\Vert \mathcal{M}_q\big( \sup_{k\in\mathbb{Z}}{\mathcal{M}_{d/\sigma}{\mathfrak{M}_{\delta,2^k}\Pi_kf}}\big)\big\Vert_{L^p}\\
&\lesssim&\mathcal{K}_u^{d/q-d/u,u}[m]\Vert f\Vert_{\dot{F}_{p}^{0,\infty}}
\end{eqnarray*}
where one has to use the boundedness of $\mathcal{M}_q$ in $L^p$ and (\ref{hlmax}) with $d/\sigma <p$ and $\delta>d/p$.
This proves (\ref{remaingoal2}) and completes the proof of (\ref{remaingoal}).

\subsubsection{The case $p=\infty$ and $0<q<1$ }
Assume $0<q<u<1$, $t=q$, and $s=d/q-d/u$.
Most parts of arguments remain valid and unchanged in this case and will not be repeated.
By using the above arguments,
one obtains
\begin{eqnarray*}
\big\Vert T_mf\big\Vert_{\dot{F}_{\infty}^{0,q}}\lesssim \sup_{P\in\mathcal{D}}{\mathfrak{R}_P^{in}}+\sup_{P\in\mathcal{D}}{[\mathfrak{R}_P^{out}]_1}+\sup_{P\in\mathcal{D}}{[\mathfrak{R}_P^{out}]_2}.
\end{eqnarray*}
Then by (\ref{applyinfty1}) and Lemma \ref{maximal2}
\begin{eqnarray*}
\sup_{P\in\mathcal{D}}{\mathfrak{R}_P^{in}}\lesssim \mathcal{K}_q^{0,q}[m]\Vert f\Vert_{\dot{F}_{\infty}^{0,q}}.
\end{eqnarray*}
By (\ref{applyinfty2}) and Lemma \ref{maximal2}
\begin{eqnarray*}
\sup_{P\in\mathcal{D}}{      [\mathfrak{R}_P^{out}]_1}\lesssim \mathcal{K}_u^{0,u}[m]\Vert f\Vert_{\dot{F}_{\infty}^{0,q}}.
\end{eqnarray*}
Moreover, (\ref{remaingoal2})  still holds even for $p=\infty$. Thus, one has
\begin{eqnarray*}
\sup_{P\in\mathcal{D}}{[\mathfrak{R}_P^{out}]_2}\lesssim \mathcal{K}_u^{d/q-d/u,u}[m]\Vert f\Vert_{\dot{F}_{\infty}^{0,\infty}}.
\end{eqnarray*}
Now we apply $\dot{F}_{\infty}^{0,q}\hookrightarrow \dot{F}_{\infty}^{0,\infty}$ to complete the proof.

\subsection{Proof of Theorem \ref{multipliertheorem2}}
Suppose $1\leq p,q\leq \infty$, $0<u\leq 1$. 
This can be proved by repeating the arguments in the proof of Theorem \ref{multipliertheorem0}. Thus we just give a short description of the proof. As before one may assume $t=u$.\\

For the case $1\leq q<p\leq \infty$ it follows from (\ref{q<p}) and (\ref{decompr}) that
\begin{eqnarray*}
\Vert T_mf\Vert_{\dot{F}_p^{0,q}}\lesssim \big\Vert \sup_{P:x\in P\in\mathcal{D}}{\mathcal{R}^{in}_{P,u}}\big\Vert_{L^p(x)}+\big\Vert \sup_{P:x\in P\in\mathcal{D}}{\mathcal{R}_{P,u}^{out}}\big\Vert_{L^p(x)}.
\end{eqnarray*}
Then, as mentioned in the remark after the proof of Theorem \ref{multipliertheorem0}, one can apply
 (\ref{dgdgdg}), (\ref{klklkl}) for $p<\infty$ and  (\ref{xbxbxb}), (\ref{xvxvxv}) for $p=\infty$.\\

The case $1<p<q\leq \infty$ follows via daulity.\\

When $p=1<q\leq \infty$, it can be proved by using $\varphi$-transform and $\infty$-atoms for $\dot{f}_1^{0,q}$.

\subsection{Proof of Theorem \ref{multipliertheorem3}(1)}
Let $1\leq p,q\leq \infty$, $p\not= q$, and $1<u\leq \infty$. One may assume $t=1$
\subsubsection{The case $1< p<\infty$, $1<q\leq \infty$}
Suppose that $s=d-d/u$.
Our claim is  the pointwise estimate
\begin{eqnarray}\label{pointestmm}
\big| T_{m_k}({\Pi}_kf)(x)\big|\lesssim \mathcal{K}_u^{d-d/u,1}[m]\mathcal{M}_{u'}\big( \Pi_kf\big)(x)
\end{eqnarray} where $1/u+1/u'=1$.
Then for $u'<p<\infty$ and $u'<q\leq\infty$ the $\dot{F}_p^{0,q}$ boundedness of $\mathcal{M}_{u'}$ gives the boundedness result of $T_m$ and  duality argument (\ref{dual}) can be applied for $1<p,q<u$. Finally the usage of the complex interpolation (\ref{complexinter}) finishes the proof by giving the condition $\big|1/p-1/q\big|<1-1/u$.

For the estimation of (\ref{pointestmm})  the left hand side can be bounded by 
\begin{eqnarray*}
\sum_{l=0}^{\infty}\big| \big( m_k^{\vee}\chi_{B_l}(2^k\cdot)\big)\ast \big(\Pi_kf\big)(x)\big|.
\end{eqnarray*} Then using H\"older's inequality with $u>1$ each summand is dominated by $$2^{ld(1-1/u)}\big\Vert \big( m(2^k\cdot)\varphi\big)^{\vee}\big\Vert_{L^u(B_l)}\mathcal{M}_{u'}(\Pi_kf)(x)$$
and (\ref{pointestmm}) follows consequently.

\subsubsection{The case $q=1< p<u$}
One needs to prove
\begin{eqnarray}\label{ffgoal}
\Big(\int{\Big( \sum_{k\in\mathbb{Z}}{\big|T_{m_k}({\Pi}_kf)(x) \big|}\Big)^{p}}dx \Big)^{1/p}\lesssim \mathcal{K}_{u}^{d-d/u,1}[m]\Vert f\Vert_{F_p^{0,1}}.
\end{eqnarray}
Using duality (\ref{dual})  the left hand side of (\ref{ffgoal}) can be written as
\begin{eqnarray*}\label{estest}
&&\sup_{\Vert g\Vert_{L^{p'}}\leq 1}{\int{\sum_{k\in\mathbb{Z}}{\big|T_{m_k}({\Pi}_kf)(x)\big|g(x)}}dx}\nonumber\\
&\leq&\sup_{\Vert g\Vert_{L^{p'}}\leq 1}{\sum_{k\in\mathbb{Z}}{\sum_{l=0}^{\infty}{\int{\big| \big(m_k^{\vee}\chi_{B_l}(2^{k}\cdot)\big)\ast({\Pi}_kf)(x)g(x)\big|}dx}}}\nonumber\\
&\leq&\sup_{\Vert g\Vert_{L^{p'}}\leq 1}{\sum_{k\in\mathbb{Z}}{\sum_{l=0}^{\infty}{\sum_{Q\in\mathcal{D}_{k-l}}{\int{\big|\big(m_k^{\vee}\chi_{B_l}(2^{k}\cdot)\big)\ast\big(\chi_Q{\Pi}_kf\big)(x)  (g\chi_{\widetilde{Q}})(x)\big|}dx}}}}\nonumber
\end{eqnarray*} where $\widetilde{Q}$ is a dilation of $Q$.
In the last inequality $\chi_{\widetilde{Q}}$ appears in the integral because of  the supports of $\chi_{B_l}(2^k\cdot)$ and $\chi_Q$.

Then using H\"older's inequality and Young's inequality the integral in the last expression is less than
\begin{eqnarray*}
&&\big\Vert m_k^{\vee}\chi_{B_l}(2^k\cdot)\big\Vert_{L^u} \big\Vert \chi_Q{\Pi}_kf\big\Vert_{L^1}\big\Vert g\chi_{\widetilde{Q}}\big\Vert_{L^{u'}} \\
&\lesssim& 2^{ld(1-1/u)}\big\Vert \big(m(2^k\cdot)\varphi \big)^{\vee}\big\Vert_{L^u(B_l)}\int_Q{|{\Pi}_kf(x)|\mathcal{M}_{u'}g(x)}dx
\end{eqnarray*}
and thus the supremum is dominated by a constant times
\begin{eqnarray*}
&&\mathcal{K}_{u}^{d-d/u,1}[m]\sup_{\Vert g\Vert_{L^{p'}\leq 1}}{\int_{\mathbb{R}^d}{\sum_{k\in\mathbb{Z}}{|{\Pi}_kf(x)|}\mathcal{M}_{u'}g(x)  }dx}.
\end{eqnarray*}
Finally, (\ref{ffgoal}) follows from H\"older's inequality with $u>1$ and the $L^{p'}$ boundedness of $\mathcal{M}_{u'}$ with $p'>u'$.

\section{\textbf{Proof of Theorem \ref{sharpbesov} $-$ \ref{sharpthm2}}}\label{example}
In what follows let $\eta$, $\widetilde{\eta}$ denote Schwartz functions so that $\eta \geq 0$, $\eta(x)\geq c$ on $\{x:|x|\leq 1/100\}$ for some $c>0$, $Supp(\widehat{\eta})\subset \{\xi: |\xi|\leq 1/1000\}$, $\widehat{\widetilde{\eta}}(\xi)=1$ for $|\xi|\leq 1/1000$, and $Supp(\widehat{\widetilde{\eta}})\subset \{\xi: |\xi|\leq 1/100\}$. Let $e_1:=(1,0,\dots,0)\in\mathbb{R}^d$.
Moreover let $\{\lambda_1,\lambda_2,\dots\}$ be a sequence of lattices in $\mathbb{R}^d$ such that 
\begin{eqnarray}\label{secondition}
|\lambda_n|\leq \sqrt{d}n^{1/d}.
\end{eqnarray} 
One way to select such a sequence is as follows.
For each $k=1,2,\dots$ let $$\lambda_{k^d}:=(k,0,0,\dots,0).$$
It should be observed that there are at most $d(k+1)^{d-1}$ integers between $k^d$ and $(k+1)^{d}$, and
there exist at least $2d(2k-1)^{d-1}$ lattices on the surface of cube $[-k,k]^d$.
Since $d(k+1)^{d-1}\leq 2d(2k-1)^{d-1}$ one can choose lattices $\lambda_{k^d+1},\lambda_{k^d+2},\dots,\lambda_{(k+1)^d-1}$ on the surface of the cube and then clearly the length of those lattices is less than $\sqrt{d}k$, which yields (\ref{secondition}).

\subsection{Proof of Theorem \ref{sharpbesov}(1)}

\subsubsection{The case $0<p\leq 1$}
Suppose $u\leq p<t$ and let
\begin{eqnarray*}
m(\xi):=\sum_{k=10}^{\infty}{\frac{1}{k^{1/p}}e^{2\pi i\langle \xi-e_1,2^{k}e_1\rangle}\widehat{\widetilde{\eta}}(\xi-e_1)}.
\end{eqnarray*}
Then due to the support of $m$ and $\varphi$ one has
\begin{eqnarray*}
\big(m(2^k\cdot)\varphi\big)^{\vee}(x)=\begin{cases}
\sum_{n=10}^{\infty}{\frac{1}{n^{1/p}}\big(2^{-kd}\widetilde{\eta}(x/2^k+2^ne_1)e^{2\pi i\langle \cdot,2^{-k}e_1\rangle} \big)\ast \varphi^{\vee}(x)},&-2\leq k\leq 2\\
0, & otherwise.
\end{cases}
\end{eqnarray*}
Let $-2\leq k\leq 2$. Then for arbitrary $M>0$
\begin{eqnarray*}
\big|\big(m(2^k\cdot)\varphi \big)^{\vee}(x) \big|&\lesssim&\sum_{n=10}^{\infty}{\frac{1}{n^{1/p}}\big|{\widetilde{\eta}}(\cdot/2^k+2^ne_1) \big|\ast |\varphi^{\vee}|(x)}\\
&\lesssim_M& \sum_{n=10}^{\infty}{\frac{1}{n^{1/p}}\frac{1}{\big(1+|x+2^ne_1| \big)^{2M}}}
\end{eqnarray*}
and 
\begin{eqnarray*}
\big\Vert \big( m(2^k\cdot)\varphi\big)^{\vee}\big\Vert_{L^u(B_l)}\lesssim_M \Big(\sum_{n=10}^{\infty}{\frac{1}{n^{u/p}}\big\Vert \big( 1+|\cdot+2^ne_1|\big)^{-{2M}}\big\Vert_{L^u(B_l)}^u} \Big)^{1/u}.
\end{eqnarray*}
We choose $M>1/u$ sufficiently large.
Since 
\begin{eqnarray*}
\big\Vert \big( 1+|\cdot+2^ne_1|\big)^{-{2M}}\big\Vert_{L^u(B_l)}\lesssim \begin{cases}
1,&\\
2^{-lM},& n\leq l-1\\
2^{-nM}, & n\geq l+2
\end{cases}
\end{eqnarray*}
one has
\begin{eqnarray*}
\big\Vert \big(m(2^k\cdot)\varphi \big)^{\vee}\big\Vert_{K_u^{0,t}}&\lesssim& \Big(\sum_{l=0}^{\infty}{\Big(\sum_{n=\max{(10,l+2)}}^{\infty}{\frac{1}{n^{u/p}}2^{-nMu}} \Big)^{t/u}} \Big)^{1/t}\\
&&+\Big(\sum_{l=9}^{\infty}\Big(\sum_{n=\max{(10,l)}}^{l+1}{\frac{1}{n^{u/p}}} \Big)^{t/u} \Big)^{1/t}+\Big(\sum_{l=11}^{\infty}{\Big(\sum_{n=10}^{l-1}{\frac{1}{n^{u/p}}2^{-lMu}} \Big)^{t/u}} \Big)^{1/t}\\
&\lesssim&1+\Big(\sum_{l=9}^{\infty}{\frac{1}{l^{t/p}}} \Big)^{1/t}.
\end{eqnarray*}
Since $t<p$ this converges, and finally one has $\mathcal{K}_u^{0,t}[m]<\infty$.

Now let $f(x):={\eta}(x)e^{2\pi i\langle x,e_1\rangle}$.
It is clear that $\Vert f\Vert_{\dot{B}_p^{0,q}}<\infty$.
On the other hand,
one has
\begin{eqnarray*}
T_mf(x)=e^{2\pi i\langle x,e_1\rangle}\sum_{k=10}^{\infty}{\dfrac{1}{k^{1/p}}\eta(x+2^ke_1)}
\end{eqnarray*} and thus
\begin{eqnarray*}
\big\Vert T_mf\big\Vert_{\dot{B}_p^{0,q}}&\gtrsim& \Big\Vert  \sum_{k=10}^{\infty}{\frac{1}{k^{1/p}}\eta(\cdot+2^ke_1)}    \Big\Vert_{L^p}\\
  &\geq&\Big(\sum_{n=10}^{\infty}{\int_{|x-2^ne_1|\leq 1/100}{\Big( \sum_{k=10}^{\infty}{\frac{1}{k^{1/p}}\eta(x+2^ke_1)}   \Big)^p     }dx} \Big)^{1/p}\\
  &\gtrsim&\Big(\sum_{n=10}^{\infty}{\frac{1}{k}} \Big)^{1/p}=\infty.
\end{eqnarray*}

\subsubsection{The case $1<p\leq \infty$}

Now suppose $1<p\leq \infty$ and $0<u\leq 1<t$ and let
\begin{eqnarray*}
m(\xi):=\sum_{k=10}^{\infty}{\frac{1}{k}e^{2\pi i\langle \xi-e_1,2^{k}e_1\rangle}\widehat{\widetilde{\eta}}(\xi-e_1)}.
\end{eqnarray*}
Then by the same arguments one can prove $\mathcal{K}_u^{0,t}[m]<\infty$.
However, this is unbounded function near $e_1$ and this proves that $T_m$ is not bounded on $L^p$ for $1<p<\infty$. Due to the support of $\widehat{\eta}$ this implies that $T_m$ is not bounded on $\dot{B}_p^{0,q}$ for $1<p<\infty$.

When $p=\infty$, let
\begin{eqnarray*}
f(x)=\sum_{k=10}^{\infty}{{\eta}(x-2^ke_1)e^{2\pi i\langle x,e_1\rangle}}.
\end{eqnarray*}
Then since $\widehat{f}\subset \{\xi:|\xi|\approx 1\}$ one has $\Vert f\Vert_{\dot{B}_{\infty}^{0,q}}\lesssim \Vert f\Vert_{L^{\infty}}\lesssim1$.
Moreover, 
\begin{eqnarray*}
 T_mf(x)=e^{2\pi i\langle x,e_1\rangle}\sum_{k=10}^{\infty}{\sum_{n=10}^{\infty}{\frac{1}{k}\eta(x-2^ne_1+2^ke_1)}}
\end{eqnarray*}
and
\begin{eqnarray*}
\big\Vert T_mf\big\Vert_{\dot{B}_{\infty}^{0,q}}&\gtrsim&\big\Vert  T_mf  \big\Vert_{L^{\infty}}\geq \Big\Vert \eta\sum_{k=10}^{\infty}{\frac{1}{k}}\Big\Vert_{L^{\infty}}=\infty.
\end{eqnarray*}

\subsection{Proof of Theorem \ref{sharpbesov}(2)}

\subsubsection{The case $0<p<1$}
Suppose $0<p<1$, $p<u\leq \infty$,  $p<t$, and $s=d/p-d/u$.
Let $1/t<\tau<1/p$ and   $$h^{(p,\tau)}(x):=\dfrac{1}{(1+|x|^2)^{d/(2p)}}\dfrac{1}{\big( 1+\log{(1+|x|)}\big)^{\tau}}.$$
Let 
\begin{eqnarray*}
K^{(p,\tau)}(x):=h^{(p,\tau)}\ast \eta(x)e^{-2\pi i \langle x,e_1\rangle}
\end{eqnarray*}
 and 
\begin{eqnarray*}
m^{(p,\tau)}(\xi):={\widehat{K^{(p,\tau)}}(\xi)}.
\end{eqnarray*} 
Then it follows that
\begin{eqnarray*}
\big(m(2^k\cdot)\varphi(x) \big)^{\vee}=\begin{cases}
\big(2^{-kd}K^{(p,\tau)}(2^k\cdot) \big)\ast \varphi^{\vee}(x), & -2\leq k\leq 2\\
0, & otherwise.
\end{cases}
\end{eqnarray*} 
By using the fact that $h(x+y)\leq \frac{h(x)}{h(y)}$ one has
\begin{eqnarray*}
\big|\big(m(2^k\cdot)\varphi \big)^{\vee}(x) \big|\lesssim h^{(p,\tau)}(x)
\end{eqnarray*}
and thus
\begin{eqnarray*}
\big\Vert \big( m(2^k\cdot)\varphi\big)^{\vee}\big\Vert_{L^u(B_l)}  &\lesssim&2^{-ld(1/p-1/u)}(1+l)^{-\tau}.
\end{eqnarray*}
This leads to
\begin{eqnarray*}
\sup_{k\in\mathbb{Z}}\big\Vert \big(m^{(p,\tau)}(2^k\cdot)\varphi\big)^{\vee}\big\Vert_{K_u^{d/p-d/u,t}}
&\lesssim& \Big( \sum_{l=0}^{\infty}{(1+l)^{-\tau t}}\Big)^{1/t}<\infty
\end{eqnarray*} because $\tau>1/t$.

Let $f(x):=\widetilde{\eta}(x/2^{10})e^{2\pi i\langle x,e_1\rangle}$. Then clearly $\Vert f\Vert_{\dot{B}_{p}^{0,q}}\approx\Vert f\Vert_{L^p}\lesssim 1$, but
\begin{eqnarray*}
\Vert T_{m^{(p,\tau)}}f\Vert_{\dot{B}_{p}^{0,q}}&\gtrsim&\Vert h^{(p,\tau)}\ast \eta\Vert_{L^p}  \gtrsim\Vert h^{(p,\tau)}\Vert_{L^p}
\end{eqnarray*} which diverges since $\tau < 1/p$.

\subsubsection{The case $1\leq p\leq \infty$}
Suppose $r=1<u< \infty$ and $1<t$. We additionally assume $1<t<u$ because other cases follow by embedding.
 We choose $\delta$ between $1/t$ and $1$ and let
\begin{eqnarray*}
m(\xi):=\sum_{n=10}^{\infty}{\frac{1}{n}\frac{1}{(\log{n})^{\delta}}e^{-2\pi i\langle \lambda_n,\xi -e_1\rangle}\widehat{\widetilde{\eta}}(\xi-e_1)}.
\end{eqnarray*}
Then
\begin{eqnarray*}
\big(m(2^k\cdot)\varphi \big)^{\vee}(x)=\begin{cases}
\displaystyle\sum_{n=10}^{\infty}{\frac{1}{n}\frac{1}{(\log{n})^{\delta}}\big(2^{-kd}\widetilde{\eta}(\cdot/2^k-\lambda_n) \big)\ast \varphi^{\vee}(x)}, & -2\leq k\leq 2\\
0,& otherwise.
\end{cases}
\end{eqnarray*}
Therefore one has that for $M>0$
\begin{eqnarray*}
\big|\big(m(2^k\cdot)\varphi \big)^{\vee}(x) \big|\lesssim_M \sum_{n=10}^{\infty}{\frac{1}{n}\frac{1}{(\log{n})^{\delta}}\frac{1}{\big( 1+|x-\lambda_n|\big)^{M}}}
\end{eqnarray*}
and $\big\Vert \big( m(2^k\cdot)\varphi\big)^{\vee}\big\Vert_{L^u(B_l)}$ is bounded by $\Omega_1+\Omega_2$ 
where
\begin{eqnarray*}
\Omega_1&=&\Big( \int_{B_l}{\Big(\sum_{n:2^l\geq 4\sqrt{d}n^{1/d}}{\frac{1}{n}\frac{1}{(\log{n})^{\delta}}\frac{1}{(1+|x-\lambda_n| )^M}} \Big)^{u}}dx\Big)^{1/u}
\end{eqnarray*}
\begin{eqnarray*}
\Omega_2&=&\Big( \int_{B_l}{\Big(\sum_{n:2^l<4\sqrt{d}n^{1/d}}{\frac{1}{n}\frac{1}{(\log{n})^{\delta}}\frac{1}{(1+|x-\lambda_n| )^M}} \Big)^{u}}dx\Big)^{1/u}.
\end{eqnarray*}

If $2^l\geq 4\sqrt{d}n^{1/d}$ and $x\in B_l$, then $\frac{1}{(1+|x-\lambda_n|)^M}\lesssim 2^{-lM}$ due to  (\ref{secondition}). 
By choosing $M$ sufficiently large
 one obtains
\begin{eqnarray*}
\Big(\sum_{l=0}^{\infty}{2^{ltd(1-1/u)}({\Omega_1})^t} \Big)^{1/t} \lesssim 1.
\end{eqnarray*}

To estimate the term associated with $\Omega_2$ we choose a constant $r$ so that $1/t-1/u<r/u<\delta-1/u$.
Using H\"older's inequalities with $u/t>1$ and $u>1$ it follows that
\begin{eqnarray*}
&&\Big(\sum_{l=0}^{\infty}{2^{ldt(1-1/u)}({\Omega_2})^t} \Big)^{1/t}\lesssim\Big[\sum_{l=0}^{\infty}\Big(\int_{B_l}\Big(    \sum_{n:2^l<4\sqrt{d}n^{1/d}}{\frac{1}{n^{1/u}}\frac{1}{(\log{n})^{\delta}}\frac{1}{(1+|x-\lambda_n|)^M}}           \Big)^{u}dx \Big)^{t/u} \Big]^{1/t}\\
&\leq&\Big( \sum_{l=0}^{\infty}{\frac{1}{(1+l)^{rt/(u-t)}}}\Big)^{1/t-1/u}\Big( \sum_{l=0}^{\infty}{(1+l)^r\int_{B_l}{\Big( \sum_{n:2^l<4\sqrt{d}n^{1/d}}{\frac{1}{n^{1/u}}\frac{1}{(\log{n})^{\delta}}\frac{1}{(1+|x-\lambda_n|)^M}}\Big)^{u}  }dx}\Big)^{1/u}\\
&\lesssim&\Big( \sum_{l=0}^{\infty}{\int_{B_l}{ \Big( \sum_{n:2^l<4\sqrt{d}n^{1/d}}{\frac{1}{n^{1/u}}\frac{1}{(\log{n})^{\delta-r/u}}\frac{1}{(1+|x-\lambda_n|)^M}}\Big)^{u}  }dx}\Big)^{1/u}\\
&\lesssim&\Big(\int_{\mathbb{R}^d}{\sum_{n=10}^{\infty}{\dfrac{1}{n}\dfrac{1}{(\log{n})^{\delta u-r}}\frac{1}{(1+|x-\lambda_n|)^M}} \Big( \sum_{l=10}^{\infty}{\frac{1}{(1+|x-\lambda_l|)^M}}\Big)^{u-1}}dx \Big)^{1/u}.
\end{eqnarray*} 
Choosing $M>d$ the last term is less than a constant times
$$\Big( \sum_{n=10}^{\infty}{\frac{1}{n}\frac{1}{(\log{n})^{\delta u-r}}}\Big)^{1/u}$$
because $\sum_{l=10}^{\infty}{\frac{1}{(1+|x-\lambda_l|)^M}}$ is finite. This proves 
\begin{eqnarray*}
\Big(\sum_{l=0}^{\infty}{2^{ltd(1-1/u)}({\Omega_2})^t} \Big)^{1/t} \lesssim_M 1
\end{eqnarray*} because $\delta u-r>1$.
Finally, one has
$\mathcal{K}_{u}^{d(1-1/u),t}[m]< \infty$.

Now due to the support of $m$ it follows that \begin{eqnarray*}
\Vert T_m\Vert_{\dot{B}_p^{0,q}\to \dot{B}_{p}^{0,q}}&\approx&\Vert T_m\Vert_{L^{p}\to L^{p}}
\end{eqnarray*} and thus one needs to confirm the $L^p$-boundedness of $T_m$.

For $1<p<\infty$ $T_m$ is not bounded on $L^p$ because $\delta<1$ implies $m\not\in L^{\infty}$.

For $p=\infty$ let $$f(x)=\sum_{n=10}^{\infty}{{\eta}(x+\lambda_n)e^{2\pi i\langle x,e_1\rangle}}.$$
Then it is clear that $f\in L^{\infty}$. On the other hand, one has
 $$T_mf(x)=e^{2\pi i\langle x,e_1\rangle}\sum_{k=10}^{\infty}{\sum_{n=10}^{\infty}{\frac{1}{k}\frac{1}{(\log{k})^{\delta}}\eta(x+\lambda_n-\lambda_k)}}$$
and thus $$\Vert T_mf\Vert_{L^{\infty}}\gtrsim \sum_{k=10}^{\infty}{\frac{1}{k}\frac{1}{(\log{k})^{\delta}}}=\infty.$$
The case $p=1$ is immediate by duality $(L^1)^*=L^{\infty}$.

\subsection{{Proof of Theorem \ref{sharpthm0}}}
Suppose $0<p,q\leq\infty$, $p\not= q$, $t>0$, and $0<u\leq \min{(1,p,q)}$.
For $n\in \mathbb{N}$ let $\zeta_n:=10n$.
\subsubsection{Construction of a multiplier}
Let 
\begin{eqnarray}\label{fffexample}
m(\xi):=\sum_{n=10}^{\infty}{\widehat{\widetilde{\eta}}\big((\xi-2^{\zeta_n}e_1)/2^{\zeta_n} \big)e^{2\pi i\langle 2^{\zeta_n}e_1,\xi-2^{\zeta_n}e_1\rangle}}.
\end{eqnarray}
Then one has 
\begin{eqnarray*}
m(2^k\xi)\varphi(\xi)=\begin{cases}
\widehat{\widetilde{\eta}}\Big( \dfrac{2^k\xi-2^{\zeta_m}e_1}{2^{\zeta_m}}\Big)e^{2\pi i\langle 2^{\zeta_m}e_1,2^k\xi-2^{\zeta_m}e_1 \rangle}\varphi(\xi), & \zeta_m-2\leq k\leq \zeta_m+2, m\geq 10\\
0, & otherwise.
\end{cases}
\end{eqnarray*}
Let $k=\zeta_m-j$ for some $j\in \{-2,-1,0,1,2\}$.
Then by elementary computation
\begin{eqnarray*}
\big|\big( m(2^k\cdot)\varphi\big)^{\vee}(x)\big|&\leq& \big|2^{jd}\widetilde{\eta}(2^j\cdot+2^{2\zeta_m}e_1) \big|\ast|\varphi^{\vee}|(x)\\
&\lesssim_M&\dfrac{1}{\big(1+|2^jx+2^{2\zeta_m}e_1| \big)^{M}}\int_{\mathbb{R}^d}{\big(1+2^j|y| \big)^M|\varphi^{\vee}(y)|}dy\\
&\lesssim_M&\dfrac{1}{\big(1+|x+2^{2\zeta_m-j}e_1| \big)^M}
\end{eqnarray*}
and thus
\begin{eqnarray}\label{estfff}
\big\Vert \big(m(2^k\cdot)\varphi \big)^{\vee}\big\Vert_{L^u(B_l)}&\lesssim& \Big(\int_{B_l}{\dfrac{1}{\big(1+|x+2^{2\zeta_m-j}e_1| \big)^{Mu}}}dx \Big)^{1/u}\nonumber\\
&\lesssim_N&\begin{cases}
1 \quad &
\\
2^{-lN} \quad & \quad2^l\geq 2^{2\zeta_m+3}
\\
2^{-\zeta_mN} \quad &
\quad  2^l\leq 2^{2\zeta_m-4}
\end{cases}
\end{eqnarray} for sufficiently large $N$.

Finally, one obtains
\begin{eqnarray*}
\big\Vert \big(m(2^k\cdot)\varphi \big)^{\vee}\big\Vert_{K_u^{0,t}}&=&\Big(\sum_{l=0}^{\infty}{\big\Vert \big(m(2^k\cdot)\varphi \big)^{\vee}\big\Vert_{L^u(B_l)}^t} \Big)^{1/t}\\
&\lesssim&\Big(\sum_{l=0}^{2\zeta_m-4}{2^{-\zeta_mNt}} \Big)^{1/t}+\Big(\sum_{l=2\zeta_m-3}^{2\zeta_m+2}{1} \Big)^{1/t}+\Big(\sum_{l=2\zeta_m+3}^{\infty}{2^{-lNt}} \Big)^{1/t}\lesssim 1
\end{eqnarray*} and this proves $\mathcal{K}_{u}^{0,t}[m]<\infty$.

\subsubsection{The case $0<p<q\leq \infty$}
Let $a_k:=k^{-1/q}(\log{k})^{-\epsilon}$ for $1/q<\epsilon<1/p$.
Let \begin{eqnarray}\label{ffunction}
f(x):=\sum_{n=10}^{\infty}{a_{\zeta_n}\eta(x)e^{2\pi i\langle x,2^{\zeta_n}e_1\rangle}}.
\end{eqnarray}
Then 
\begin{eqnarray*}
\Pi_kf(x)=\begin{cases}
a_{\zeta_m}\big(\eta e^{2\pi i\langle x,2^{\zeta_m}e_1\rangle} \big)\ast\phi_k(x), & \zeta_m-1\leq k\leq \zeta_m+1, m\geq 10\\
0,& otherwise.
\end{cases}
\end{eqnarray*}
For each $\zeta_m-1\leq k\leq \zeta_m+1, m\geq 10$ it is clear that for $M>0$
\begin{eqnarray*}
\big| \Pi_kf(x)\big|&\leq&a_{\zeta_m}|\eta|\ast |\phi_k|(x)\lesssim_M a_{\zeta_m}\frac{1}{(1+|x|)^M}.
\end{eqnarray*}
Choosing $M>d/p$  one has
\begin{eqnarray*}
\Vert f\Vert_{\dot{F}_p^{0,q}}\lesssim \Big(\sum_{m=10}^{\infty}\sum_{k=\zeta_m-1}^{\zeta_m+1}{a_{\zeta_m}^q} \Big)^{1/q}\lesssim \Big(\sum_{k=10}^{\infty}a_k^q \Big)^{1/q}<\infty.
\end{eqnarray*}
On the other hand, one has
\begin{eqnarray*}
T_mf(x)=\sum_{n=10}^{\infty}{a_{\zeta_n}\eta(x+2^{\zeta_n}e_1)e^{2\pi i\langle x,2^{\zeta_n}e_1\rangle}}
\end{eqnarray*}
and
\begin{eqnarray*}
\Vert T_mf\Vert_{\dot{F}_p^{0,q}}&=&\Big\Vert \Big( \sum_{k\in\mathbb{Z}}{\big|\phi_k\ast T_mf\big|^q}\Big)^{1/q}   \Big\Vert_{L^p}\gtrsim \Big\Vert \Big(\sum_{k\in\mathbb{Z}}{\big|(\phi_{k-1}+\phi_k+\phi_{k+1})\ast T_mf\big|^q}\Big)^{1/q} \Big\Vert_{L^p}\\
&\geq&\Big(\int_{\mathbb{R}^d}{\Big( \sum_{n=10}^{\infty}{|a_{\zeta_n}|^q\big|\eta(x+2^{\zeta_n}e_1) \big|^q}\Big)^{p/q}}dx \Big)^{1/p}\\
&\geq&\Big(\sum_{m=10}^{\infty}{\int_{|x+2^{\zeta_m}e_1|\leq 1/100}{|a_{\zeta_m}|^p\big|\eta(x+2^{\zeta_m}e_1) \big|^p}dx} \Big)^{1/p}\\
&\gtrsim&\Big( \sum_{m=10}^{\infty}{|a_{\zeta_m}|^p}\Big)^{1/p}=\infty.
\end{eqnarray*}

\subsubsection{The case $0<q<p< \infty$}
Choosing $b_k:=k^{-1/p}(\log{k})^{-\delta}$ for $1/p<\delta<1/q$  let
\begin{eqnarray}\label{gfunction}
g(x):=\sum_{n=10}^{\infty}{b_{\zeta_n}\eta(x-2^{\zeta_n}e_1)e^{2\pi i\langle x,2^{\zeta_n}e_1\rangle}}.
\end{eqnarray}
Similarly one has
\begin{eqnarray*}
\Pi_kg(x)=\begin{cases}
b_{\zeta_m}\big( \eta(\cdot-2^{\zeta_m}e_1)e^{2\pi i\langle \cdot,2^{\zeta_m}e_1\rangle}\big)\ast\phi_k(x),&\zeta_m-1\leq k\leq \zeta_m+1, m\geq 10\\
0,& otherwise.
\end{cases}
\end{eqnarray*}
This implies that  for each $\zeta_m-1\leq k\leq \zeta_m+1, m\geq 10$ 
\begin{eqnarray*}
|\Pi_kg(x)|\leq b_{\zeta_m}\big|\eta(\cdot-2^{\zeta_m}e_1)\big|\ast |\phi_k|(x)\lesssim_M b_{\zeta_m}\frac{1}{\big(1+|x-2^{\zeta_m}e_1| \big)^M}
\end{eqnarray*}
and thus
\begin{eqnarray*}
\Vert g\Vert_{\dot{F}_p^{0,q}}&\lesssim_M&\Big(\int_{\mathbb{R}^d}{\Big( \sum_{m=10}^{\infty}\sum_{k=\zeta_m-1}^{\zeta_m+1}{|b_{\zeta_m}|^q\frac{1}{\big(1+|x-2^{\zeta_m}e_1| \big)^{Mq}}}\Big)^{p/q}}dx \Big)^{1/p}.
\end{eqnarray*}
Choosing $M>d/q$ and using H\"older's inequality with $p/q>1$ this is less than
\begin{eqnarray*}
\Big(\int_{\mathbb{R}^d}{\sum_{m=10}^{\infty}\sum_{k=\zeta_m-1}^{\zeta_m+1}{|b_{\zeta_m}|^p\frac{1}{\big(1+|x-2^{\zeta_m}e_1| \big)^{Mq}}}}dx \Big)^{1/p}\lesssim \Big(\sum_{k=10}^{\infty}{|b_k|^p} \Big)^{1/p}<\infty.
\end{eqnarray*}

However, 
\begin{eqnarray*}
T_mg(x)=\sum_{m=10}^{\infty}{b_{\zeta_m}e^{2\pi i\langle x,2^{\zeta_m}e_1\rangle}\eta(x)}
\end{eqnarray*} and thus
\begin{eqnarray*}
\Vert T_mg\Vert_{\dot{F}_p^{0,q}}&\gtrsim& \Big\Vert \Big(\sum_{k\in\mathbb{Z}}{\big|\big( \phi_{k-1}+\phi_k+\phi_{k+1}\big)\ast T_mg \big|^q}\Big)^{1/q}\Big\Vert_{L^p}\\
&\geq& \Vert \eta\Vert_{L^p}\Big( \sum_{n=10}^{\infty}{|b_{\zeta_n}|^q}\Big)^{1/q}=\infty.
\end{eqnarray*}

\subsubsection{The case $0<q<p= \infty$}
Let \begin{eqnarray}\label{hfunction}
h(x):=\sum_{n=10}^{\infty}{\eta(x-2^{\zeta_n}e_1)e^{2\pi i\langle x,2^{\zeta_n}e_1\rangle}}.
\end{eqnarray}
Note that $h$ is defined by letting $b_{\zeta_n}=1$ in (\ref{gfunction}). Therefore
one has  for $\zeta_m-1\leq k\leq \zeta_m+1, m\geq 10$ \begin{eqnarray}\label{esth}
|\Pi_kh(x)|\lesssim \frac{1}{\big( 1+|x-2^{\zeta_m}e_1|\big)^M}
\end{eqnarray}
and $\Vert h\Vert_{\dot{F}_{\infty}^{0,q}}\lesssim 1$.
Moreover, \begin{eqnarray*}
T_mh(x)=\eta(x)\sum_{n=10}^{\infty}{e^{2\pi i\langle x,2^{\zeta_m}e_1\rangle}}
\end{eqnarray*} and \begin{eqnarray*}
\Vert T_mh\Vert_{\dot{F}_{\infty}^{0,q}}=\infty.
\end{eqnarray*}

\subsection{{Proof of Theorem \ref{sharpthm1}}}
Let $r=\min{(p,q)}$ and suppose that $r<1$. 
\subsubsection{Construction of a multiplier}
For $u,s,\tau>0$ let  $$h^{(u,s,\tau)}(x):=\dfrac{1}{(1+|x|^2)^{1/2(s+d/u)}}\dfrac{1}{\big( 1+\log{(1+|x|)}\big)^{\tau}}.$$
Then we define 
\begin{eqnarray*}
K^{(u,s,\tau)}(x):=h^{(u,s,\tau)}\ast \eta(x)e^{-2\pi i \langle x,e_1\rangle}
\end{eqnarray*}
 and 
\begin{eqnarray*}
m^{(u,s,\tau)}(\xi):=\sum_{n=10}^{\infty}{\widehat{K^{(u,s,\tau)}_n}(\xi)}
\end{eqnarray*} where $K^{(u,s,\tau)}_n(x):=2^{\zeta_nd}K^{(u,s,\tau)}(2^{\zeta_n}x)$.
Then it should be observed that 
\begin{eqnarray*}
m^{(u,s,\tau)}(2^k\xi)\varphi(\xi)=\begin{cases}
\widehat{K^{(u,s,\tau)}}(2^{k-\zeta_n}\xi)\varphi(\xi),&\zeta_n-2\leq k\leq \zeta_n+2,n\geq 10\\
0, & otherwise.
\end{cases}
\end{eqnarray*}
Let $k=\zeta_n-j$ for some $j\in\{-2,-1,0,1,2\}$. Then
\begin{eqnarray*}
\big|\big(m^{(u,s,\tau)}(2^k\cdot)\varphi \big)^{\vee}(x) \big|&\lesssim&\big(|h^{(u,s,\tau)}|\ast|\eta|(2^j\cdot) \big)\ast|\varphi^{\vee}|(x)\lesssim h^{(u,s,\tau)}(x)
\end{eqnarray*}
where one has to use the fact that $h^{(u,s,\tau)}(x+y)\leq \dfrac{h^{(u,s,\tau)}(x)}{h^{(u,s,\tau)}(y)}$.
Therefore  one has 
\begin{eqnarray*}
\big\Vert \big(m^{(u,s,\tau)}(2^k\cdot)\varphi \big)^{\vee}\big\Vert_{L^u(B_l)}&\lesssim&\Vert h^{(u,s,\tau)}\Vert_{L^u(B_l)}\lesssim 2^{-ls}(1+l)^{-\tau}
\end{eqnarray*} uniformly in $k$.
This leads to
\begin{eqnarray*}
\big\Vert \big(m^{(u,s,\tau)}(2^k\cdot)\varphi\big)^{\vee}\big\Vert_{K_u^{s,t}}
&\lesssim& \Big( \sum_{l=0}^{\infty}{(1+l)^{-\tau t}}\Big)^{1/t}.
\end{eqnarray*} 

Finally, if $\tau>1/t$ then $\mathcal{K}_{u}^{s,t}[m^{(u,s,\tau)}]<\infty$.

\subsubsection{ The case $0< p<1$ and $p\leq q$ }
Let $f(x):=2^{\zeta_{10}d}\widetilde{\eta}(2^{\zeta_{10}}x)e^{-2\pi i\langle 2^{\zeta_{10}}x,e_1\rangle}$. 
It clearly follows that  $\Vert f\Vert_{\dot{F}_{p}^{0,q}}\lesssim 1$. 

On the other hand
\begin{eqnarray*}
T_{m^{(u,s,\tau)}}f(x)=2^{\zeta_{10}d}h^{(u,s,\tau)}\ast\eta(2^{\zeta_{10}}x)e^{-2\pi i\langle 2^{\zeta_{10}}x,e_1 \rangle}
\end{eqnarray*}
and 
\begin{eqnarray*}
\Vert T_{m^{(u,s,\tau)}}f\Vert_{\dot{F}_p^{0,q}}&\gtrsim& \Big\Vert \Big(\sum_{k\in\mathbb{Z}}{\big|\big(\phi_{k-1}+\phi_k+\phi_{k+1} \big)\ast T_{m^{(u,s,\tau)}}f \big|^q} \Big)^{1/q}\Big\Vert_{L^p}\\
&\geq&\Vert h^{(u,s,\tau)}\ast \eta\Vert_{L^p}\gtrsim \Vert h^{(u,s,\tau)}\Vert_{L^p}.
\end{eqnarray*}
This diverges if $s<d/p-d/u$ or if $s=d/p-d/u$ and $\tau\leq 1/p$.
Hence if $\tau=d/p-d/u$ and $t>p$, then choose $\tau$ so that $1/t<\tau<1/p$.

\subsubsection{The case $0< q<1$ and $q\leq p<\infty$ }
In this case we apply the idea in \cite{Ch_Se}.
 Let $\{r_k\}_{k=1}^{\infty}$ be a fixed sequence of positive numbers. Suppose $0<q\leq p<\infty$ and $K\in\mathcal{E}(1)$. If
\begin{eqnarray*}
\Big\Vert \Big( \sum_{k}{\big| r_k^dK(r_k\cdot)\ast f_k\big|^q}\Big)^{1/q}\Big\Vert_{L^p}\leq A\Big\Vert \Big(  \sum_{k}{|f_k|^q}\Big)^{1/q}\Big\Vert_{L^p}
\end{eqnarray*} for all $f_k\in\mathcal{E}(r_k)$ with $\{f_k\}\in L^p(l^q)$, then
there exists a constant $C_{p,q,d}$ such that
\begin{eqnarray*}
\Vert K\Vert_{L^q}\leq C_{p,q,d}A.
\end{eqnarray*}

By applying Minkowski's inequality with $1/p>1$ one obtains
\begin{eqnarray*}
\Vert K^{(u,s,\tau)}\Vert_{L^q}\gtrsim \Vert h^{(u,s,\tau)}\Vert_{L^q}
\end{eqnarray*}  and this diverges if $s<d/q-d/u$ or if $s=d/q-d/u$ and  $\tau\leq 1/q$. Therefore the proof is done by choosing $\tau$ satisfying $1/t<\tau<1/q$ if $s=d/q-d/u$ and $t>q$.

\subsection{{Proof of Theorem \ref{sharpthm2}}}\label{sharpnesstheorem1.2}
Now suppose $1\leq p,q\leq \infty$, $1<u\leq \infty$, and $s\in\mathbb{R}$.
\subsubsection{Construction of  multipliers}

Let
\begin{eqnarray*}
m^{(1)}(\xi)=\sum_{n=10}^{\infty}{{2^{-\zeta_n(s-d(1-1/u))}}{{\zeta_n}^{-s/d}}\widehat{\widetilde{\eta}}(\xi-2^{\zeta_n}e_1)e^{2\pi i\langle \lambda_{\zeta_n},\xi \rangle}}
\end{eqnarray*}
\begin{eqnarray*}
m_{\alpha}^{(2)}(\xi)=\sum_{n=10}^{\infty}{2^{-2\zeta_n(s-d(1-1/u))}{\zeta_n}^{-s\alpha }\widehat{\widetilde{\eta}}\big(2^{\zeta_n}(\xi-2^{\zeta_n}e_1)\big)e^{2\pi i\langle 2^{\zeta_n} {\zeta_n}^{\alpha}e_1,\xi\rangle}}.
\end{eqnarray*}

We claim that 
\begin{eqnarray}\label{u11}
\mathcal{K}_{u}^{s,t}[m^{(1)}]<\infty, \quad s,t>0,~ u>1
\end{eqnarray} 
\begin{eqnarray}\label{u2}
\mathcal{K}_{u}^{s,t}[m_{\alpha}^{(2)}]<\infty, \quad \alpha>0.
\end{eqnarray}

Indeed, one has
\begin{eqnarray*}
&&m^{(1)}(2^k\xi)\varphi(\xi)\\
&=&\begin{cases}
2^{-\zeta_n(s-d+d/u)}{\zeta_n^{-s/d}}\widehat{\widetilde{\eta}}(2^k\xi-2^{\zeta_n}e_1)e^{2\pi i\langle 2^k\lambda_{\zeta_n},\xi\rangle}\varphi(\xi), &\zeta_n-2\leq k\leq \zeta_n+2, n\geq 10\\
0, & otherwise.
\end{cases}
\end{eqnarray*}
Let $k=\zeta_n-j$ for $j\in\{-2,-1,0,1,2\}$. Then one obtains
\begin{eqnarray*}
\big|\big(m^{(1)}(2^k\cdot)\varphi \big)^{\vee}(x) \big|&\leq& 2^{jd}2^{-\zeta_n(s+d/u)}{\zeta_n^{-s/d}}\big|\widetilde{\eta}(\cdot/2^{\zeta_n-j}+\lambda_{\zeta_n}) \big|\ast |\varphi^{\vee}|(x)\\
&\lesssim_M&2^{-\zeta_n(s+d/u)}\zeta_n^{-s/d}\dfrac{1}{\big(1+|\frac{x}{2^{\zeta_n}}+\lambda_{\zeta_n}| \big)^{2M}}
\end{eqnarray*}
and for sufficiently large $C>0$
\begin{eqnarray*}
\big\Vert \big(m^{(1)}(2^k\cdot)\varphi \big)^{\vee}\big\Vert_{L^u(B_l)}\lesssim \begin{cases}
2^{-\zeta_ns}\zeta_n^{s/d},    &\\
2^{-\zeta_ns}\zeta_n^{s/d}2^{\zeta_nM}2^{-lM},  &2^l>C2^{\zeta_n}\zeta_n^{1/d}.
\end{cases}
\end{eqnarray*}
This yields (\ref{u11}).

In order to prove (\ref{u2})
it should be observed that
\begin{eqnarray*}
&&m_{\alpha}^{(2)}(2^k\xi)\varphi(\xi)\\
&=&\begin{cases}
2^{-2\zeta_n(s-d+d/u)}\zeta_n^{-s\alpha}\widehat{\widetilde{\eta}}\big(2^{\zeta_n}(2^k\xi-2^{\zeta_n}e_1) \big)e^{2\pi i\langle 2^{\zeta_n+k}\zeta_n^{\alpha}e_1,\xi\rangle}\varphi(\xi), & \zeta_n-2\leq k\leq \zeta_n+2, n\geq 10\\
0, & otherwise.
\end{cases}
\end{eqnarray*}
Let $k=\zeta_n-j$ for some $j\in \{-2,-1,0,1,2\}$. Then
one has
\begin{eqnarray*}
\big| \big( m_{\alpha}^{(2)}(2^k\cdot)\varphi\big)^{\vee}\big|&\lesssim& 2^{-2\zeta_n(s+d/s)}\zeta_n^{-s\alpha}\big| \widetilde{\eta}\big(\cdot/2^{2\zeta_n-j}+\zeta_n^{\alpha}e_1 \big)\big|\ast |\varphi^{\vee}|(x)\\
&\lesssim_M&2^{-2\zeta_n(s+d/u)}\zeta_n^{-s\alpha}\dfrac{1}{\big(1+\big|\frac{x}{2^{2\zeta_n}}+\zeta_n^{\alpha}e_1 \big| \big)^{2M}}
\end{eqnarray*}
and for sufficiently large $C>0$
\begin{eqnarray*}
\big\Vert \big(m_{\alpha}^{(2)}(2^k\cdot)\varphi \big)^{\vee}\big\Vert_{L^u(B_l)}\lesssim \begin{cases}
2^{-2\zeta_n s}\zeta_n^{-s\alpha}, &\\
2^{-2\zeta_n s}\zeta_n^{-s\alpha}2^{2\zeta_n M}2^{-lM}, & 2^l>C2^{2\zeta_n}\zeta_n^{\alpha}.
\end{cases}
\end{eqnarray*}
This proves (\ref{u2}).

\subsubsection{The case $1\leq p<q\leq \infty$}
Let $1/q<\epsilon<1/p$ and  $f\in \dot{F}_p^{0,q}$ be defined as in (\ref{ffunction}).  
Then 
\begin{eqnarray*}
T_{m^{(1)}}f(x)=\sum_{n=10}^{\infty}{a_{\zeta_n}2^{-\zeta_n(s-d+d/u)}\zeta_n^{-s/d}\eta(x+\lambda_{\zeta_n})e^{2\pi i\langle x+\lambda_{\zeta_n},2^{\zeta_n}e_1\rangle}}
\end{eqnarray*}
and
\begin{eqnarray*}
\Vert T_{m^{(1)}}f\Vert_{\dot{F}_p^{0,q}}&\gtrsim& \Big\Vert \Big(\sum_{k\in\mathbb{Z}}{\big| \big(\phi_{k-1}+\phi_k+\phi_{k+1} \big)\ast T_{m^{(1)}}f\big|^q} \Big)^{1/q}\Big\Vert_{L^p}\\
&=&\Big(\int_{\mathbb{R}^d}{\Big(\sum_{n=10}^{\infty}{|a_{\zeta_n}|^q2^{-\zeta_nq(s-d+d/u)}\zeta_n^{-sq/d}\big|\eta(x+\lambda_{\zeta_n}) \big|^q} \Big)^{p/q}}dx \Big)^{1/p}\\
&\geq&\Big(\sum_{n=10}^{\infty}{\int_{|x+\lambda_{\zeta_n}|\leq 1/100}{|a_{\zeta_n}|^p2^{-\zeta_np(s-d+d/u)}\zeta_n^{-sp/d}\big|\eta(x+\lambda_{\zeta_n})\big|^p}dx} \Big)\\
&\gtrsim&\Big( \sum_{n=10}^{\infty}{2^{-\zeta_np(s-d+d/u)}\zeta_n^{-p(s/d+1/q)}(\log{\zeta_n})^{-\epsilon p}}\Big)^{1/p}.
\end{eqnarray*}
This diverges if $s<d(1-1/u)$,  or if $s=d(1-1/u)$ and $1/p-1/q\geq 1-1/u$.
One has proved the case $ u\leq q\leq \infty$ of (2) and the case $1\leq p<q\leq \infty$ of (1) and (4).

\subsubsection{The case $1\leq q<p<\infty$}\label{q<1}
Choose $b_k:=k^{-1/p}(\log{k})^{-\delta}$ for $1/p<\delta <1/q$ and let
\begin{eqnarray*}
g(x):=\sum_{n=10}^{\infty}{b_{\zeta_n}\eta(x-\lambda_{\zeta_n})e^{2\pi i\langle x,2^{\zeta_n}e_1\rangle}}.
\end{eqnarray*}
Then by the same argument in (\ref{gfunction}) one can show $g\in\dot{F}_p^{0,q}$.
Moreover one has
\begin{eqnarray*}
T_{m^{(1)}}g(x)=\eta(x)\sum_{n=10}^{\infty}{b_{\zeta_n}2^{-\zeta_n(s-d+d/u)}\zeta_n^{-s/d}e^{2\pi i\langle x+\lambda_{\zeta_n},2^{\zeta_n}e_1\rangle}}
\end{eqnarray*}
 and 
 \begin{eqnarray*}
 \Vert T_{m^{(1)}}g\Vert_{\dot{F}_p^{0,q}}&\gtrsim& \Big\Vert \Big(\sum_{k\in\mathbb{Z}}{\big| \big(\phi_{k-1}+\phi_k+\phi_{k+1} \big)\ast T_{m^{(1)}}g\big|^q} \Big)^{1/q}\Big\Vert_{L^p}\\
 &\geq&\Vert \eta\Vert_{L^p}\Big( \sum_{n=10}^{\infty}{|b_{\zeta_n}|^q2^{-\zeta_nq(s-d+d/u)}\zeta_n^{-sq/d}}\Big)^{1/q}\\
 &\gtrsim&\Big( \sum_{n=10}^{\infty}{2^{-\zeta_nq(s-d+d/u)}\zeta_n^{-q(1/p+s/d)}\big(\log{\zeta_n}\big)^{-\delta q}}\Big)^{1/q}.
 \end{eqnarray*}
This diverges if $s<d(1-1/u)$, or if $s=d(1-1/u)$ and $1/q-1/p\geq 1-1/u$.
This proves the cases $1\leq q<p<\infty$ of (1) and (4).

\subsubsection{The case $1\leq q<p=\infty$}
For $s>0$ and $\alpha=\frac{1}{sq}$ let
\begin{eqnarray*}
h_{\alpha}(x):=\sum_{n=10}^{\infty}{\eta(x/2^{\zeta_n}-\zeta_n^{\alpha}e_1)e^{2\pi i\langle x,2^{\zeta_n}e_1\rangle}}.
\end{eqnarray*}
Then by using the same arguments in (\ref{esth}) one obtains for $\zeta_n-1\leq k\leq \zeta_n+1, n\geq 10$
\begin{eqnarray*}
|\Pi_kh_{\alpha}(x)|\lesssim \dfrac{1}{\big(1+\big|{x}/{2^{\zeta_n}}-\zeta_n^{\alpha}e_1\big| \big)^{M}}.
\end{eqnarray*}
Therefore one has for sufficiently large $M>0$
\begin{eqnarray*}
\Vert h_{\alpha}\Vert_{\dot{F}_{\infty}^{0,q}}\lesssim \Big\Vert \Big(\sum_{n=10}^{\infty}\sum_{k=\zeta_n-1}^{\zeta_n+1}{\dfrac{1}{\big( 1+| \cdot/2^{\zeta_n}-\zeta_n^{\alpha}e_1|\big)^{Mq}}}\Big)^{1/q}\Big\Vert_{L^{\infty}}
\end{eqnarray*} 
and this is finite because
 for $\alpha>0$
\begin{eqnarray*}
\sum_{k=1}^{\infty}\dfrac{1}{(1+|\cdot/2^k-k^{\alpha}|)^M}
\end{eqnarray*} belongs to $L^{\infty}(\mathbb{R})$ if $M$ is large enough depending on $\alpha$.

On the other hand one has
\begin{eqnarray*}
T_{m_{\alpha}^{(2)}}h_{\alpha}(x)=\sum_{n=10}^{\infty}{2^{-2\zeta_n(s-d(1-1/u))}{\zeta_n}^{-s\alpha}\eta(x/2^{\zeta_n})e^{2\pi i\langle x+2^{\zeta_n}\zeta_n^{\alpha}e_1,2^{\zeta_n}e_1\rangle}}
\end{eqnarray*}
and
\begin{eqnarray*}
\big\Vert T_{m_{\alpha}^{(2)}}h_{\alpha}\big\Vert_{\dot{F}_{\infty}^{0,q}}&\gtrsim&\Big( \int_{[0,1]^d}{\sum_{k=1}^{\infty}{\big|\big( \phi_{k-1}+\phi_k+\phi_{k+1}\big)\ast\big(T_{m_{\alpha}^{(2)}}h_{\alpha}\big)(x) \big|^q}}dx\Big)^{1/q}\\
&\gtrsim&\Big(\int_{[0,1]^d}{\sum_{n=10}^{\infty}{2^{-2q\zeta_n(s-d+d/u)}\zeta_n^{-1}\big| \eta(x/2^{\zeta_n})\big|^q}}dx \Big)^{1/q}.
\end{eqnarray*}
Note that if $\log_2{100\sqrt{d}}\leq \zeta_n$ then $\eta(x/2^{\zeta_n})\geq c$ for $x\in[0,1]^d$. Thus we choose an integer $n_d\geq \frac{\log_2{100\sqrt{d}}}{10}$ and then
\begin{eqnarray*}
\big\Vert T_{m_{\alpha}^{(2)}}h_{\alpha}\big\Vert_{\dot{F}_{\infty}^{0,q}}\gtrsim \Big( \sum_{n=\max{(10,n_d)}}^{\infty}{2^{-2q\zeta_n(s-d+d/u)}\zeta_n^{-1}}\Big)^{1/q}.
\end{eqnarray*}
This diverges if $s\leq d(1-1/u)$.

This proves (3) and the case $1\leq q<p=\infty$ of (1). Furthermore, the case $1<q<\infty$ of (2) is proved via duality (\ref{dual}).

\subsubsection{The proof of Theorem \ref{sharpthm2} (5) }
This can be proved by using an idea in the proof of Theorem \ref{sharpbesov} (2).

\section{\textbf{Proof of Theorem \ref{weightmultiplier} and \ref{sharptheorem1} }}\label{weightproof}
Suppose $0<p,q\leq \infty$, $p\not= q$, and $0<u\leq\min{(1,p,q)}$.

\subsection{Proof of Theorem \ref{weightmultiplier}}
\subsubsection{The case $0<q<p<\infty$}
Let $0<q<p<\infty$, and $0<u\leq q$.
By (\ref{dgdgdg}) it suffices to prove
\begin{eqnarray*}
\big\Vert \sup_{P:x\in P\in\mathcal{D}}{\mathcal{R}_{P,u}^{out}}\big\Vert_{L^p(x)}\lesssim \mathcal{B}_{p,q}(w)\mathcal{K}_u^{0,u}(w)[m].
\end{eqnarray*}
Let $P$ be a dyadic cube with side length $l(P)=2^{-\mu}$.
Using H\"older's inequality with $p/q>1$ and (\ref{tftf}), $\mathcal{R}_{P,u}^{out}$ is controlled by
\begin{eqnarray*}
&& \mathcal{B}_{p,q}(w)\Big( \dfrac{1}{|P|}\int_P{\Big( \sum_{k=\mu}^{\infty}{w(k-\mu)^p2^{kdp(1/u-1)}\big\Vert m_k^{\vee}\Pi_kf(y-\cdot)\big\Vert_{L^u(B(0,2l(P))^c)}^p}\Big)^{q/p}}dy\Big)^{1/q}\\
&\lesssim&\mathcal{B}_{p,q}(w)\Big(\dfrac{1}{|P|}\int_P{\Big(\sum_{k=\mu}^{\infty}{2^{kdp(1/u-1)}\Big(\sum_{l=k-\mu}^{\infty}{w(l)^u\big| m_k^{\vee}\chi_{B_l}(2^k\cdot)\big|^u\ast\big| \Pi_kf\big|^u(y)} \Big)^{p/u}} \Big)^{q/p}}dy \Big)^{1/q}.
\end{eqnarray*}
Then by taking supremum over $P\in\mathcal{D}$ containing $x$ one obtains
\begin{eqnarray*}
&&\big\Vert \sup_{P:x\in P\in\mathcal{D}}{\mathcal{R}_{P,u}^{out}}\big\Vert_{L^p(x)}\\
&\lesssim&\mathcal{B}_{p,q}(w)\Big\Vert  \mathcal{M}_{q/p}\Big(\sum_{k\in\mathbb{Z}}{2^{kdp(1/u-1)}\Big(\sum_{l=0}^{\infty}{w(l)^u\big| m_k^{\vee}\chi_{B_l}(2^k\cdot)\big|^u\ast\big| \Pi_kf\big|^u} \Big)^{p/u}} \Big)\Big\Vert_{L^1}^{1/p}\\
&\lesssim&\mathcal{B}_{p,q}(w)\Big(\sum_{k\in\mathbb{Z}}{2^{kdp(1/u-1)}\Big\Vert \sum_{l=0}^{\infty}{w(l)^u\big| m_k^{\vee}\chi_{B_l}(2^k\cdot)\big|^u\ast |\Pi_kf|^u}\Big\Vert_{L^{p/u}}^{p/u}} \Big)^{1/p}\\
&\lesssim&\mathcal{B}_{p,q}(w)\Big(\sum_{k\in\mathbb{Z}}{\Vert \Pi_kf\Vert_{L^p}^p\Big( \sum_{l=0}^{\infty}{w(l)^u\big\Vert \big(m(2^k\cdot)\varphi \big)^{\vee}\big\Vert_{L^u(B_l)}^u}\Big)^{p/u}} \Big)^{1/p}\\
&\lesssim&\mathcal{B}_{p,q}(w)\mathcal{K}_{u}^{0,u}(w)[m]\Vert f\Vert_{\dot{F}_p^{0,p}}
\end{eqnarray*}
where the second inequality follows from the boundedness of $\mathcal{M}_{q/p}$ in $L^1$ and the thrid one is from  Minkowski's inequality with $p/u>1$ and Young's inequality.
Then embedding $\dot{F}_p^{0,q}\hookrightarrow \dot{F}_p^{0,p}$ finishes the proof.

\subsubsection{The case $0<q<p=\infty$}
Due to (\ref{xbxbxb}) one needs to prove that
\begin{eqnarray}\label{etetet}
\sup_{P\in\mathcal{D}}{\mathcal{R}_{P,u}^{out}}\lesssim \mathcal{B}_{\infty,q}(w)\mathcal{K}_{u}^{0,u}(w)[m]\Vert f\Vert_{\dot{F}_{\infty}^{0,q}}.
\end{eqnarray}
By (\ref{tftf}) and H\"older's inequality(if $u<q$)  $\mathcal{R}_{P,u}^{out}$ is controlled by
\begin{eqnarray*}
&&\Big( \dfrac{1}{|P|}\int_P{\sum_{k=\mu}^{\infty}{w(k-\mu)^{-q}2^{kdq(1/u-1)}\Big(\sum_{l=k-\mu}^{\infty}{w(l)^u\big| m_k^{\vee}\chi_{B_l}(2^k\cdot)\big|^u\ast |\Pi_kf|^u(x)} \Big)^{q/u}}}dx\Big)^{1/q}\\
&\lesssim&\mathcal{B}_{\infty,q}(w)\sup_{k\in\mathbb{Z}}{2^{kd(1/u-1)}\Big( \sum_{l=k-\mu}^{\infty}{w(l)^u\big\Vert \big| m_k^{\vee}\chi_{B_l}(2^k\cdot)\big|^u\ast |\Pi_kf|^u\big\Vert_{L^{\infty}}}\Big)^{1/u}}.
\end{eqnarray*}
Then it follows that  $\mathcal{R}_{P,u}^{out}\lesssim \mathcal{B}_{\infty,q}(w)\mathcal{K}_{u}^{0,u}(w)[m]\Vert f\Vert_{\dot{F}_{\infty}^{0,\infty}}$ by Young's inequality. Finally,
 (\ref{etetet}) is proved from embedding $\dot{F}_{\infty}^{0,q}\hookrightarrow \dot{F}_{\infty}^{0,\infty}$.

\subsubsection{The case $0<p<q\leq \infty$}
The case $1<p<q\leq \infty$ follows via duality (\ref{dual}).
Therefore we suppose $0<p\leq 1$.
We follow the proof of Theorem \ref{multipliertheorem0}.
Let $A_{Q_0,k}$ be defined as (\ref{rq}). Then one needs to show 
\begin{eqnarray}\label{wegoal}
\Big(\sum_{j=2}^{\infty}\int_{D_j}{\Big(\sum_{k=\mu}^{\infty}{2^{kdq(1/u-1)}\big(\mathcal{L}_{u,j}^{mid}[A_{Q_0,k}](x) \big)^q} \Big)^{p/q}}dx \Big)^{1/p}\lesssim \mathcal{B}_{p,q}(w)\mathcal{K}_{u}^{0,u}(w)[m]
\end{eqnarray}
because other terms are bounded by $\mathcal{K}_{u}^{0,u}[m]$.

By $l^p\hookrightarrow l^q$, Minkowski's inequality (with $p/u>1$ if $p>u$),  $l^u\hookrightarrow l^p$, the increasing property of $\{w(l)\}$, and H\"older's inequality with $q/p>1$,
the left hand side of (\ref{wegoal}) is less than
\begin{eqnarray*}
&&\Big(\sum_{k=\mu}^{\infty}\sum_{j=2}^{\infty}{2^{kdp(1/u-1)}\int_{D_j}{\Big(\int_{\mathbb{R}^d}{\sum_{l:2^l\approx  2^{j+k-\mu}}{\big| \big(m_k^{\vee}\chi_{B_l}(2^k\cdot) \big)(y)\big|^u} \big|A_{Q_0,k}(x-y) \big|^u}dy \Big)^{p/u}}dx} \Big)^{1/p}\\
&\lesssim&\Big( \sum_{k=\mu}^{\infty}{2^{kdp(1/u-1)}\Vert A_{Q_0,k}\Vert_{L^p}^p\sum_{j=2}^{\infty}\Big( \sum_{l:2^l\approx 2^{j+k-\mu}}{\big\Vert m_k^{\vee}\chi_{B_l}(2^k\cdot)\big\Vert_{L^u}^u}\Big)^{p/u}}\Big)^{1/p}\\
&\lesssim&\Big( \sum_{k=\mu}^{\infty}{2^{kdp(1/u-1)}\Vert A_{Q_0,k}\Vert_{L^p}^p\Big( \sum_{l={k-\mu}}^{\infty}{\big\Vert m_k^{\vee}\chi_{B_l}(2^k\cdot)\big\Vert_{L^u}^u}\Big)^{p/u}}\Big)^{1/p}\\
&\leq&\Big( \sum_{k=\mu}^{\infty}{\Vert A_{Q_0,k}\Vert_{L^p}^pw(k-\mu)^{-p}\Big( \sum_{l={k-\mu}}^{\infty}{\big\Vert \big(m(2^k\cdot)\varphi \big)^{\vee}\big\Vert_{L^u(B_l)}^uw(l)^u}\Big)^{p/u}}\Big)^{1/p}\\
&\lesssim&\mathcal{K}_u^{0,u}(w)[m]\mathcal{B}_{p,q}(w)\Big(\sum_{k=\mu}^{\infty}{\Vert A_{Q_0,k}\Vert_{L^p}^q} \Big)^{1/q}.
\end{eqnarray*}
Then using the fact that $$\big(\sum_{k=\mu}^{\infty}{\Vert A_{Q_0,k}\Vert_{L^p}^q} \big)^{1/q}\lesssim 1$$ (\ref{wegoal}) is proved.

\subsection{Proof of Theorem \ref{sharptheorem1} }
Suppose that $0< p,q,\leq\infty$, $p\not= q$, and $t>0$. We construct a counter example based on the idea in the proof of Theorem \ref{sharpthm0}.
Let \begin{eqnarray*}
w(k):={(1+k)^{|1/p-1/q|}}.
\end{eqnarray*} 
Then it is clear that
$\sum_{k=0}^{\infty}{w(k)^{-s}}<\infty$ for all $s>\frac{1}{|1/p-1/q|}$.

\begin{eqnarray*}
m(\xi):=\sum_{n=10}^{\infty}{\frac{1}{w(\zeta_n)}\widehat{\eta}\big((\xi-2^{\zeta_n}e_1)/2^{\zeta_n}\big)e^{2\pi i\langle 2^{\zeta_n}e_1,\xi-2^{\zeta_n}e_1\rangle}}.
\end{eqnarray*}
Note that $m$ is a modification of (\ref{fffexample}) with weight $\{\frac{1}{w}\}$. 
Thus, by using the idea in (\ref{estfff}) one obtains that for $k=\zeta_m-j$, $j\in \{-2,-1,0,1,2\}$
\begin{eqnarray*}
\big\Vert \big(m(2^k\cdot)\varphi \big)^{\vee}\big\Vert_{L^u(B_l)}\lesssim_N\begin{cases}
w(\zeta_m)^{-1}&\\
2^{-lN}w(\zeta_m)^{-1}& 2^l\geq 2^{2\zeta_m+3}\\
2^{-\zeta_m N}w(\zeta_m)^{-1}& 2^l\leq 2^{2\zeta_m-4}
\end{cases}
\end{eqnarray*}
and \begin{eqnarray*}
&&\big\Vert \big( m(2^k\cdot)\varphi\big)^{\vee}\big\Vert_{K_u^{0,t}(w)}\\
&\lesssim&\Big( \sum_{l=0}^{2\zeta_m-4}{\Big(\frac{w(l)}{w(\zeta_m)} \Big)^t2^{-\zeta_mNt}}\Big)^{1/t}+\Big( \sum_{l=0}^{2\zeta_m-4}{\Big(\frac{w(l)}{w(\zeta_m)} \Big)^t2^{-\zeta_mNt}}\Big)^{1/t}+\Big( \sum_{l=0}^{2\zeta_m-4}{\Big(\frac{w(l)}{w(\zeta_m)} \Big)^t2^{-\zeta_mNt}}\Big)^{1/t}\\
&\lesssim&\frac{w(2\zeta_m+3)}{w(\zeta_m)}\lesssim 1
\end{eqnarray*}
by the monotonicity of $\{w(k)\}$. This proves $\mathcal{K}_u^{0,t}(w)[m]<\infty$.

Now let us prove that the operator $T_m$ is not bounded on $F_p^{0,q}$ for $p\not= q$.
When $0<p<q\leq \infty$ let $f\in \dot{F}_{p}^{0,q}$ be defined as in (\ref{ffunction}) and the similar process yields that
\begin{eqnarray*}
\big\Vert T_mf\big\Vert_{\dot{F}_p^{0,q}}&\gtrsim&\Big( \sum_{n=10}^{\infty}{\Big(\dfrac{|a_{\zeta_n}|}{w(\zeta_n)}\Big)^{p}}\Big)^{1/p}=\Big(\sum_{n=10}^{\infty}{\dfrac{1}{\zeta_n(\log{\zeta_n})^{p\epsilon}}} \Big)^{1/p}=\infty.
\end{eqnarray*}
When $0< q<p<\infty$ let $g\in \dot{F}_p^{0,q}$ be as in (\ref{gfunction}). Then one can similarly prove that
\begin{eqnarray*}
\big\Vert T_mg\big\Vert_{\dot{F}_p^{0,q}}&\gtrsim&\Big( \sum_{n=10}^{\infty}{\Big(\dfrac{|b_{\zeta_n}|}{w(\zeta_n)}\Big)^{q}}\Big)^{1/q}=\Big(\sum_{n=10}^{\infty}{\dfrac{1}{\zeta_n(\log{\zeta_n})^{q\delta}}} \Big)^{1/q}=\infty.
\end{eqnarray*}
When $0<q<p=\infty$ define $h\in \dot{F}_p^{0,q}$ as in (\ref{hfunction}) and it also follows that
\begin{eqnarray*}
\Vert T_mh\Vert_{\dot{F}_{\infty}^{0,q}}&\gtrsim&\Big( \sum_{n=10}^{\infty}{\dfrac{1}{w(\zeta_n)^q}}\Big)^{1/q}=\Big( \sum_{n=10}^{\infty}{\dfrac{1}{\zeta_n}}\Big)^{1/q}=\infty.
\end{eqnarray*}

\section*{Acknowledgement}
 {The author would like to thank his graduate advisor Andreas Seeger for the guidance and helpful discussions. The author was supported in part by NSF grant DMS 1500162.}

\end{document}